\documentclass[10pt,final,reqno]{amsart}
\usepackage{amsfonts,latexsym,url,amsmath,amssymb,comment,enumitem}
\usepackage{upgreek}
\excludecomment{commentforus}
\usepackage{subfigure}
\usepackage{url,centernot}
\usepackage{graphicx,color}
\usepackage[color,notref,notcite]{showkeys}
\usepackage{bm}
\usepackage{stmaryrd}
\usepackage{bbm}
\usepackage{textcomp}
\parindent=0pt
\usepackage{csquotes}

\DeclareMathAlphabet\mathbfcal{OMS}{cmsy}{b}{n}

\newcommand{\mathbi}[1]{{\boldsymbol #1}}

\def\R{\mathbb{R}}

\def\err{\mathsf{err}}

\def\P{\mathbfcal{P}}
\def\Proj{\P_{\!\mesh}}
\def\<{\langle}
\def\>{\rangle}

\def\div{{\rm div}}

\def\norm#1#2{\Vert#1\Vert_{#2}}

\newcounter{cst}

\def\bt{\begin{theorem}}
\def\et{\end{theorem}}
\def\bl{\begin{lemma}}
\def\el{\end{lemma}}
\def\bc{\begin{corollary}}
\def\ec{\end{corollary}}
\def\bd{\begin{definition}}
\def\ed{\end{definition}}
\def\br{\begin{remark}}
\def\er{\end{remark}}

\def\ad{{\rm ad}}

\def\Uad{\mathcal U_\ad}
\def\Uadh{\mathcal U_{\ad,h}}
\def\Uh{\mathcal U_{h}}

\def\bP{\overline{P}}

\def\cv{K}

\newcommand{\mesh}{{\mathcal T}}

\newcommand{\edge}{{\sigma}}

\newcommand{\edges}{{\mathcal E}}              
\newcommand{\edgescv}{{{\edges}_\cv}}

\newcommand{\x}{\mathbi{x}}

\newcommand{\bu}{\overline{u}}

\newcommand{\bvarphi}{\mathbi{\varphi}}

\newcommand{\be}{\begin{equation}}
\newcommand{\ee}{\end{equation}}
\newcommand{\disc}{{\mathcal D}}

\renewcommand{\O}{\Omega}

\def\dr{\partial}
\def\bfn{\mathbf{n}}

\renewcommand{\d}{{\:\rm d}}

\newcommand{\ba}{\begin{array}{llll}   }
\newcommand{\bac}{\begin{array}{c}}
\newcommand{\bari}{\begin{array}{r}}
\newcommand{\ea}{\end{array}}

\newcommand{\NORM}[1]{{\left\vert\kern-0.25ex\left\vert\kern-0.25ex\left\vert #1 
    \right\vert\kern-0.25ex\right\vert\kern-0.25ex\right\vert}}

\newcommand{\by}{\overline y}
\newcommand{\bp}{\overline  p}
\newcommand{\oc}{\overline c}
\newcommand{\tu}{\widetilde{u}}
\newcommand{\tp}{\widetilde{p}}

\newcommand{\yd}{\overline{y}_d}

\def\hat#1{\widehat{#1}}

\def\assum#1{{\rm\textbf{(A#1)}}}

\newtheorem{theorem}{Theorem}[section]
\newtheorem{remark}[theorem]{Remark}

\newtheorem{lemma}[theorem]{Lemma} 
\newtheorem{definition}[theorem]{Definition}
\newtheorem{proposition}[theorem]{Proposition}
\newtheorem{corollary}[theorem]{Corollary}

\numberwithin{equation}{section}
\def\WS{{\rm WS}}

\def\Xint#1{\mathchoice
{\XXint\displaystyle\textstyle{#1}}%
{\XXint\textstyle\scriptstyle{#1}}%
{\XXint\scriptstyle\scriptscriptstyle{#1}}%
{\XXint\scriptscriptstyle\scriptscriptstyle{#1}}%
\!\int}
\def\XXint#1#2#3{{\setbox0=\hbox{$#1{#2#3}{\int}$ }
\vcenter{\hbox{$#2#3$ }}\kern-.6\wd0}}

\def\dashint{\Xint-}

\usepackage[normalem]{ulem}
\normalem
\newcounter{corr}
\definecolor{violet}{rgb}{0.580,0.,0.827}
\newcommand{\corr}[3]{\typeout{Warning : a correction remains in page
\thepage}
				\stepcounter{corr}        
				{\color{blue}\ifmmode\text{\,\sout{\ensuremath{#1}}\,}\else\sout{#1}\fi}
        {\color{red}#2}
        {\color{violet} \fbox{\thecorr}#3}}


\newcounter{cexp}
\catcode`\@=11
\def\terml#1{T_{\refstepcounter{cexp}\@bsphack
\protected@write\@auxout{}%
           {\string\newlabel{#1}{{\thecexp}{\thepage}}}\thecexp}}
\catcode`\@=12


\begin{document}
\title[Numerical analysis of pure Neumann control problem]{Numerical analysis for the pure Neumann control problem using the gradient discretisation method}
\author{J\'erome Droniou}
\address{School of Mathematical Sciences, Monash University, Clayton,
Victoria 3800, Australia.
\texttt{jerome.droniou@monash.edu}}
\author{Neela Nataraj}
\address{Department of Mathematics, Indian Institute of Technology Bombay, Powai, Mumbai 400076, India.
\texttt{neela@math.iitb.ac.in}}
\author{Devika Shylaja}
\address{IITB-Monash Research Academy, Indian Institute of Technology Bombay, Powai, Mumbai 400076, India.
\texttt{devikas@math.iitb.ac.in}}

\date{\today}

\maketitle

\begin{abstract}
The article discusses the gradient discretisation method (GDM) for distributed optimal control problems governed by diffusion equation with pure Neumann boundary condition. Using the GDM framework enables to develop an analysis that directly applies to a wide range of numerical schemes, from conforming and non-conforming
finite elements, to mixed finite elements, to finite volumes and mimetic finite differences methods.
 Optimal order error estimates for state, adjoint and control variables for low order schemes are derived under standard regularity assumptions. A {novel}{} projection relation between the optimal control and the adjoint variable allows the proof of a super-convergence result for post-processed control. Numerical experiments performed using a modified active set strategy algorithm for conforming, nonconforming and mimetic finite difference methods confirm the theoretical rates of convergence.
\end{abstract}

\section{Introduction}\label{sec:control_neumann}
Consider the following distributed optimal control problem governed by the diffusion equation with Neumann boundary condition:
\begin{subequations} \label{model.neumann}
	\begin{align}
	&  { \min_{u \in \Uad} J(y, u) \,\,\, \textrm{ subject to } }  \label{cost1}\\
	&  \quad{-\div(A\nabla y) =  u + f\,\, \mbox{ in } \Omega,  }\label{state1} \\
	&  \quad{A\nabla y\cdot\bfn_{\O}  =0\,\,\mbox{ on $\partial\O$},\qquad \dashint_{\O}y(\x)\d\x=0.} \label{state2}
	\end{align}
\end{subequations} 
Here,  $\Omega \subset {\mathbb R}^d \; (d \le 3)$ is a bounded domain with boundary $\partial \Omega$,
$\bfn_\O$ is the outer unit normal to $\O$ and $\dashint_{\O}y(\x)\d\x:=\frac{1}{|\O|}\int_\O y(\x)\d\x$ denotes the average value of the function $y$ over $\O$ (here and in the rest of the paper,
$|E|$ is the Lebesgue measure of a set $E\subset \R^d$). The cost functional, dependent on the
state variable $y$ and the control variable $u$, is given by
\begin{equation}\label{def:cost}
J(y, u) :=\frac{1}{2}\norm{y- \yd}{L^2(\O)}^2 + \frac{\alpha}{2} \norm{u}{L^2(\O)}^2
\end{equation}
with $\alpha>0$. The desired state variable $\yd\in L^2(\O)$ is chosen to satisfy $\dashint_\O \yd(\x)\d\x=0$. The source term $f \in L^2(\O)$ also satisfies
the zero average condition $\dashint_\O f(\x)\d\x=0$. The diffusion matrix $A:\O\rightarrow \mathcal M_d(\R)$
is a measurable, bounded and uniformly elliptic matrix-valued function such
that $A(\x)$ is symmetric for a.e. $\x\in \O$. Finally, the admissible set of controls $\Uad$ is the non-empty
convex set defined by
\be\label{comp.cond}
\Uad=\left\{u\in L^2(\O) \,:\,a\le u\le b \mbox{ and }\dashint_\O u(\x) \d\x=0\right\},
\ee
where $a$ and $b$ are constants in $[-\infty,+\infty]$ with $a<0<b$ (this condition is necessary to ensure that $\Uad$ is not empty or reduced to $\{0\}$).
\medskip

In this paper, we discuss the discretisation of control and state by the gradient discretisation method (GDM). The GDM is a generic framework for the convergence analysis of numerical methods (finite elements, mixed finite elements, finite volume, mimetic finite difference methods, etc.) for linear and non-linear elliptic and parabolic diffusion equations (including degenerate equations), the Navier-Stokes equations, variational inequalities, Darcy flows in fractured media, etc. See for example \cite{dro-12-gra,DEH15,DHM16,aln-14-vi,zamm2013}, and the monograph \cite{gdm} for a complete presentation of the GDM. The contributions of this article are summarised now. Here, we
\begin{itemize}
	\item establish basic error estimates that provide $\mathcal O(h)$ convergence rate for all the three variables (control, state and adjoint) for low order schemes under standard regularity assumptions for the pure Neumann problem, without reaction term. Given that the optimal control $\bu$ is approximated by piecewise constant functions, the convergence rates are optimal;
\item prove super-convergence result for post-processed optimal controls, state and adjoint variables; 
\item establish a projection relation (see Lemma \ref{u_proj_c}) between control and adjoint variables. This relation, which is non-standard since it has to account for the zero average constraints, is the key to prove the super-convergence result for all three variables;
\item design a  modified active set strategy algorithm for GDM that is adapted to this non-standard projection relation;
\item discuss some numerical results that confirm the theoretical rates of convergence for conforming, nonconforming and mimetic finite difference methods. 
\end{itemize} 
The literature contains several contributions to numerical analysis for second order distributed optimal control 
problems governed by diffusion equation with Dirichlet and Neumann boundary conditions (BC) (we refer to \cite{GDM_control, CMAR} and the references therein). For Dirichlet BC, the super-convergence of post-processed controls 
for conforming finite element (FE) methods has been investigated in \cite{CMAR}. Recently, this result was extended to 
the GDM in \cite{GDM_control}. Carrying out this analysis in the context of the GDM means that it readily applies to 
various schemes, including non-conforming $\mathbb{P}_1$ finite elements and hybrid mimetic mixed schemes (HMM), which 
contains mixed-hybrid mimetic finite differences; and for these schemes, the analysis of \cite{GDM_control} provides 
novel estimates. We refer to \cite{ECKK, ECMMJPR07, ECMMJPR, ECMMFT, ECJPR, ECFT} for an analysis of nonlinear 
elliptic optimal control problems. Several works cover optimal control for second order Neumann boundary value 
problems, albeit with an additional (linear or non-linear) reaction term which makes the state equation naturally well-
posed, without zero average constraint, see \cite{TAJPAR,TAJPAR15,ECMMFT,KP15,MMAR}. To the best of our knowledge, the 
numerical analysis of pure Neumann control problems, without reaction term and thus with the integral constraint, is 
open even for finite element methods. Our results therefore seem to be new and, being established in the GDM 
framework, cover a range of numerical methods, including conforming Galerkin methods, non-conforming finite elements, 
and mimetic finite differences. Although done on the simple problem \eqref{model.neumann},
the analysis uncovers some properties, such as the specific relation formula between the adjoint
and control variables and a modified active set algorithm used to compute the solution
of the numerical scheme. These have a wider application potential in optimisation problems involving an integral constraint. For example, in the model of \cite{cj86,pe66} describing the miscible displacement of one fluid by another in a porous medium, the pressure is subjected to an elliptic equation with homogeneous Neumann boundary
conditions. In this equation, the source terms model the injection and production wells, and are typically the only quantities that engineers can adjust (to some extent). Hence, considering these source terms as controls
of the pressure may lead to optimal control problems as in \eqref{model.neumann}, with the exact
same boundary conditions and integral constraints on the control terms.

\medskip

The paper is organised as follows. Section \ref{sec:GDM:ellptic} recalls the GDM for elliptic problems with Neumann BC and the properties needed to prove its convergence. Some classical examples of GDM are presented in Subsection \ref{subsec:eg}. Section \ref{sec:GDM:Control} deals with the GDM for the optimal control problem \eqref{model.neumann}. The basic error estimates and super-convergence results are presented in Subsections \ref{err_est} and \ref{supercv}. The first super-convergence result provides a nearly quadratic convergence rate for a post-proces\-sed control.
Under an $L^\infty$ stability assumption
of the GDM, which stems for most schemes from the quasi-uniformity of the mesh, the second super-convergence theorem establishes a full quadratic super-convergence rate. Discussions on post-proces\-sed controls and the projection relation between control and proper adjoint are presented in Subsection \ref{dis:ppu}. The active set strategy is an algorithm to solve the non-linear Karush-Kuhn-Tucker (KKT) formulation of the optimal control problem \cite{tf}. Subsection \ref{activeset} presents a modification of this algorithm that accounts for the zero average constraint on the control. This modified active set algorithm also automatically selects the proper discrete adjoint whose projection provides the discrete control variable. In Subsection \ref{examples}, we present the results of some numerical experiments. The paper ends with an appendix, Section \ref{sec:appendix}, where we prove the results stated in Subsections \ref{err_est} and \ref{supercv}.

\medskip
 
Before concluding this introduction, we discuss the optimality conditions for \eqref{model.neumann}. For a given $u \in \Uad$, there exists a unique weak solution $y(u) \in H^1_\star(\Omega):=\{w\in H^1(\O)\,:\dashint_\O w(\x) \d\x=0\}$ of \eqref{state1}--\eqref{state2}. That is, for $u \in \Uad$, there exists a unique  $y(u) \in H^1_\star(\Omega)$ such that for all ${w}\in H^1_\star(\O)$,
\begin{equation}\label{weak_state}
a(y,w)=\int_\O u {w} \d\x ,
\end{equation}
where $a(\phi,\psi)=
\int_\O A\nabla \phi\cdot\nabla \psi \d\x$ for all $\phi, \psi \in H^1(\O)$. The term $y(u)$ is the {state} {associated} with the control $u$.

In the following, the norm and scalar product in $L^2(\O)$ (or $L^2(\O)^d$ for vector-valued functions) are denoted by $\|\cdot\|$ and $(\cdot,\cdot)$. The convex control problem \eqref{model.neumann} has a unique solution $(\by,\bu) \in H^1_\star(\O) \times \Uad $ and there exists a co-state $\bp \in H^1(\O)$ such that the triplet $(\by, \bp, \bu) \in H^1_\star(\O) \times H^1(\O) \times \Uad$ satisfies the KKT optimality conditions \cite[Chapter 2]{jl}:
\begin{subequations} \label{continuous_kkt}
	\begin{align}
	& a(\by,w) = (\bu +f, w) \,  &\   \forall \: w \in H^1_\star(\O), \label{state_cont} \\
	& a(z,\bp) = (\by-\yd, z)\,  &\   \forall \: z \in H^1(\O), \label{adj_cont}\\
	& (\bp+\alpha\bu,v-\bu)\geq 0\,  &\  \forall \: v\in  \Uad. \label{opt_cont}
	\end{align}	
\end{subequations}

\begin{remark}[Choice of the adjoint state]
{Several co-states satisfy the optimality conditions \eqref{continuous_kkt}, as $\bp$ is only determined up to an additive constant by \eqref{continuous_kkt}. The same will be true for the discrete co-state,
solution to a discrete version of these KKT equations. Establishing error estimates require the continuous
and discrete co-states to have the same average. The usual choice is to fix this average as zero.
However, for the control problem with pure Neumann conditions, this is not the best choice. Indeed, as seen in Lemma \ref{u_proj_c}, 
establishing a proper relation between the control and co-state requires a certain zero average of a
\emph{non-linear} function of this co-state. A more efficient
approach, that we will adopt, to fix the proper co-states is thus the following:
\begin{enumerate}
\item Design an algorithm (the modified active set algorithm of Subsection \ref{activeset}) that computes
a discrete co-state with the proper condition, so that the discrete control can be easily
obtained in terms of this discrete co-state,
\item Fix the average of the continuous co-state $\bp$ to be the same as the average of the
discrete co-state obtained above.
\end{enumerate}
As we will see,  an algebraic relation between this $\bp$ and the continuous control $\bu$ can still be written,
upon selecting a proper (but non-explicit) translation of $\bp$.}
\end{remark}

\begin{remark}[Zero average constraint on the source term and desired state]\label{avg_f}
 \noindent
\begin{itemize}
	\item[(i)] 
If we consider \eqref{model.neumann} without the constraint $\dashint_\O f(\x)\d\x = 0$ on the source term, the set of admissible controls needs to be modified into
\[
\Uad=\left\{u\in L^2(\O) \,:\,a\le u\le b \mbox{ and }\dashint_\O (u+f) \d\x=0\right\}.
\]
In this case, a simple transformation can bring us back to the case of a source term
with zero average. Rewrite the state equation \eqref{state1} as $-\div(A\nabla y)=u^\star + f^\star$ with $u^\star=u+\dashint_\O f \d\x$ and $f^\star=f-\dashint_\O f\d\x$. Then, $\dashint_\O f^\star\d\x=0$ and $u^\star\in \Uad^\star$ where
\[
\Uad^\star=\left\{u^\star\in L^2(\O) \,:\,a^\star\le u^\star\le b^\star \mbox{ and }\dashint_\O u^\star \d\x=0\right\}
\]
with $a^\star=a+\dashint_\O f\d\x$ and $b^\star=b+\dashint_\O f\d\x$.
\item[(ii)] If the desired state $\yd\in L^2(\O)$ is such that $\dashint_\O \yd(\x)\d\x =:m\neq 0$, 
then it is natural to select states $y$ in \eqref{model.neumann} with the same average $m$ (since
the average of these states can be freely fixed, and the choice made in \eqref{state2} is arbitrary).
This ensures the best possible approximation of the desired state $\yd$. In that case, working with
$y-m$ and $\yd-m$ instead of $y$ and $\yd$ brings back to the original formulation \eqref{model.neumann}
with a desired state $\yd-m$ having a zero average.
\end{itemize}
\end{remark}

\begin{remark}[Non-homogeneous BCs]\label{non-homBC}
The study of second order distributed control problem \eqref{model.neumann} with non-homogeneous boundary conditions 
$A\nabla y\cdot\bfn_{\O}  =g$ on $\partial\O$ (with $g \in  L^2(\partial\O)$)
follows in a similar way. In this case, the source terms and boundary condition are supposed to satisfy
the compatibility condition
\[
\int_\O f\d\x+\int_{\partial\O}g\d s(\x)=0.
\]
The controls are still taken in $\Uad$ defined by \eqref{comp.cond} and the KKT optimality condition is \cite{jl}: Seek $(\by, \bp, \bu) \in H^1_\star(\O) \times H^1(\O) \times \Uad$ such that
	 	\begin{align*}
	 	& a(\by,w) = (\bu + f, w) + (g,\gamma(w))_\dr\,  &\   \forall \: w \in H^1_\star(\O), \\
	 	& a(z,\bp) = (\by-\yd, z)\,  &\   \forall \: z \in H^1(\O), \\
	 	& (\bp+\alpha\bu,v-\bu)\geq 0\,  &\  \forall \: v\in  \Uad,
	 	\end{align*}	
	 where $\gamma:H^1(\O) \rightarrow L^2(\dr\O)$ is the trace operator and $(\cdot,\cdot)_\partial$ is the inner product
in $L^2(\partial\O)$.
\end{remark}


\section{GDM for elliptic PDE with Neumann BC}\label{sec:GDM:ellptic}
The gradient discretisation method (GDM) consists in writing numerical schemes,
called gradient schemes (GS), by replacing the continuous space and operators by discrete ones in the weak formulation of the problem \cite{gdm,dro-12-gra,eym-12-sma}. These discrete space and operators are given by a gradient discretisation (GD).

\subsection{Gradient discretisation and gradient scheme} \label{subsec:GD.GS}

A notion of gradient discretisation for Neumann BC is given in \cite[Definition 3.1]{gdm}. The following extends this definition by demanding the existence of the element $1_\disc$ and is always satisfied in practical applications. This existence ensures that the zero average condition can be put in the discretisation space or in the bilinear form as for the continuous formulation, see Remark \ref{discrete_space}.

\begin{definition}[Gradient discretisation for Neumann boundary  conditions]\label{def:GD}
A gradient discretisation (GD) for homogeneous Neumann boundary conditions is given by $\disc=(X_{\disc},\Pi_\disc,\nabla_\disc)$ such that
	\begin{itemize}
		\item $X_{\disc}$ is a finite dimensional vector space on $\R$.
		\item $\Pi_\disc:X_{\disc}  \rightarrow L^2(\O)$ and $\nabla_\disc:X_{\disc}  \rightarrow L^2(\O)^d$ are linear mappings.
		\item The quantity
		\be\label{def:norm}
		\norm{w}{\disc}^2:= \norm{\nabla_\disc w}{}^2+\left|\dashint_\O\Pi_\disc w(\x)\d\x\right|^2
		\ee
		is a norm on $X_{\disc}$.
		\item There exists $1_\disc\in X_\disc$ such that $\Pi_\disc 1_\disc = 1$ on $\O$ and $\nabla_\disc 1_\disc=0$ on $\O$.
	\end{itemize}
\end{definition}

The flexibility of the GDM analysis framework comes from the wide possible range of choices for $(X_\disc,\Pi_\disc,\nabla_\disc)$. Each of these choices correspond to a particular numerical scheme (see Subsection \ref{subsec:eg}
for a few examples). The space $X_\disc$ represents the degrees of freedom (unknowns) of the method; a vector in $X_\disc$ gathers values for such unknowns. The operators $\Pi_\disc$ and $\nabla_\disc$ reconstruct, from a set of values of these unknowns, a scalar (resp. vector) function on the entire set $\O$. The scalar function is suppose to play the role of the 
solution/test functions itself in the weak formulation of the PDE; the vector-function, reconstructed ``gradient'', is used in lieu of the gradients of these solution/test functions. Performing these substitutions in the weak formulation leads to a finite-dimensional system of equations (on the unknowns), which is dubbed the gradient scheme corresponding to
the gradient discretisation $\disc$.

If $F\in L^2(\O)$ is such that $\dashint_\O F(\x)\d\x=0$, the weak formulation of the Neumann boundary value problem
\be\label{base}
\left\{
\ba
-\div(A\nabla \psi)=F\mbox{ in $\O$},\\
A\nabla \psi\cdot\bfn_\O=0\mbox{ on $\dr\O$}
\ea
\right.
\ee
is given by
\be\label{base_weak}
\mbox{Find $\psi \in H^1_\star(\O)$ such that, for all $w \in H^1_\star(\O)$, }
a(\psi,w) =	\int_\O F w \d\x.
\ee
As explained above, a gradient scheme for \eqref{base} is then obtained from a GD $\disc$ by writing the
weak formulation \eqref{base_weak} with the continuous spaces, functions and gradients
replaced with their discrete counterparts:
\be\label{base.GS}
\begin{aligned}
	&\mbox{Find $\psi_\disc\in X_{\disc,\star}$ such that, for all $w_\disc\in X_{\disc,\star}$,}\\
	&a_{\disc}(\psi_{\disc},w_{\disc})=
	\int_\O F\Pi_\disc w_\disc\d\x,
\end{aligned}
\ee
where $\displaystyle a_{\disc}(\phi_{\disc},z_{\disc})=
\int_\O A\nabla_{\disc} \phi_{\disc}\cdot\nabla_{\disc} {z}_{\disc} \d\x$, for all 
$\phi_{\disc}, z_{\disc} \in X_{\disc}$, and $X_{\disc,\star}=\{w_\disc\in X_\disc\,:\,\dashint_\O \Pi_\disc w_\disc\d\x=0\}$.

\begin{remark} \label{discrete_space}
As for the continuous formulation \eqref{base_weak}, using the element $1_\disc\in X_\disc$ actually enables us to consider in \eqref{base.GS} test functions $w_\disc$ in $X_{\disc}$, rather than just $X_{\disc,\star}$. The simplest technique to achieve this is to use a quadratic penalty method \cite[Chapter 11]{MSG_FEM}. For any $\rho>0$, \eqref{base.GS} can be shown equivalent to
	\be\label{base.GS.remark}
	\begin{aligned}
		&\mbox{Find $\psi_\disc\in X_{\disc}$ such that, for all $w_\disc\in X_{\disc}$,}\\
		&a_{\disc}(\psi_{\disc},w_{\disc}) + \rho \left(\dashint_\O\Pi_\disc \psi_\disc
		\d\x\right)\left(\dashint_\O \Pi_\disc w_\disc\d\x\right)
		=\int_\O F\Pi_\disc w_\disc\d\x.
	\end{aligned}
	\ee
{Indeed, considering $w_\disc=1_\disc$ in \eqref{base.GS.remark} shows that the solution
to this problem belongs to $X_{\disc,\star}$ and is therefore a solution to \eqref{base.GS}. The
converse is straightforward.}
\end{remark}

\subsection{Examples of gradient discretisations} \label{subsec:eg}

We briefly present here a few examples based on known numerical methods. We refer to \cite{gdm}
for a detailed analysis of these methods, and more examples of gradient discretisations. As demonstrated by these examples,
the GDM cover a wide range of different numerical methods. This means that the analysis carried out
in the GDM framework for the control problem in \eqref{model.neumann} readily applies to all
these methods. In particular, this makes the control problem accessible to numerical schemes
not usually considered but relevant to diffusion models, such as schemes applicables on generic meshes
(not just triangular/quadrangular meshes) as encountered for example in reservoir engineering applications.

\medskip

Let us consider a mesh $\mesh$ of $\O$. A precise definition can be found for example in \cite[Definition 7.2]{gdm}
but, for our purpose, the intuitive understanding of $\mesh$ as a partition of $\O$ in polygonal/polyhedral sets
is sufficient.

\medskip

\textbf{Conforming $\mathbb{P}_1$ finite elements}. The simplest gradient discretisation is perhaps
obtained by considering conforming $\mathbb{P}_1$ finite elements. The mesh is made of triangles
(in 2D) or tetrahedra (in 3D), with no hanging nodes.
Each $v_\disc\in X_{\disc}$ is a vector of values at the vertices of the mesh (the standard unknowns
of conforming $\mathbb{P}_1$ finite elements). $\Pi_\disc v_\disc$ is the continuous piecewise linear function on the mesh which takes these values at the vertices, and $\nabla_\disc v_\disc= \nabla(\Pi_\disc v_\disc)$.
Then \eqref{base.GS} is the standard $\mathbb{P}_1$ finite element scheme for \eqref{base}.

\medskip

\textbf{Non-conforming $\mathbb{P}_1$ finite elements}. As above, the mesh is made of conforming
triangles or tetrahedra. Each $v_\disc\in X_{\disc}$  is a vector of values at the centers of mass of the edges/faces,
$\Pi_\disc v_\disc$ is the piecewise linear function on the mesh which takes these values at these centers of mass,
and $\nabla_\disc v_\disc= \nabla_\mesh(\Pi_\disc v_\disc)$ is the broken gradient of $\Pi_\disc v_\disc$.
In that case, \eqref{base.GS} gives the non-conforming $\mathbb{P}_1$ finite element approximation of \eqref{base}.

\medskip

\textbf{Mass-lumped non-conforming $\mathbb{P}_1$ finite elements}. Still considering a conforming
triangular/tetrahedral mesh, the space $X_\disc$ and gradient reconstruction
$\nabla_\disc$ are identical to those of the non-conforming $\mathbb{P}_1$ finite elements described above, but
the function reconstruction is modified to be piecewise constant. For each edge/face $\edge$ of the mesh, we consider
the diamond $D_\edge$ around $\edge$ constructed from the edge/face and the one or two cell centers on each side
(see Figure \ref{fig:diamond}). Then, for $v=(v_\edge)_{\edge}\in X_\disc$, the reconstructed
function $\Pi_\disc v$ is the piecewise constant function on the diamonds, equal to $v_\edge$ on $D_\edge$ for all edge/face
$\edge$.

\begin{figure}[htb]
\begin{center}
\input{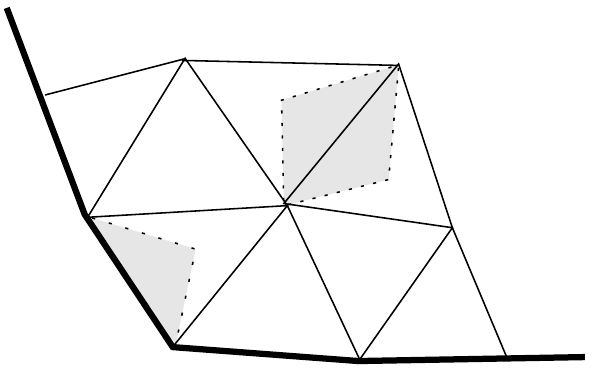_t}
\caption{Diamonds for the definition of the mass-lumped non-conforming $\mathbb{P}_1$ finite elements}
\label{fig:diamond}
\end{center}
\end{figure}

\medskip

\textbf{Hybrid mixed mimetic method (HMM)}. We now consider a generic mesh $\mesh$ (not necessarily
triangular/tetrahedral) with one
point $\x_K$ chosen in each cell $K\in\mesh$ such that $K$ is strictly star-shaped with respect to $\x_K$; see Figure
\ref{fig:mesh} for some notations. A vector $v\in X_{\disc}$ is made of cell $(v_K)_K$ and face $(v_\edge)_\edge$ values,
and the operator $\Pi_\disc$ reconstructs a piecewise constant function from the cell values:
for any cell $K$, $\Pi_\disc v = v_K$ on $K$. The gradient reconstruction $\nabla_\disc$
is built in two pieces: a consistent gradient $\overline{\nabla}_K$ constant over the cell
and stabilisation terms constant over the half-diamonds $D_{K,\edge}$ (and akin to the
remainders of first-order Taylor expansions between the cell and face values). For any cell $K$
and any face $\edge$ of $K$, we set
\[
\nabla_\disc v = \overline{\nabla}_K v + \frac{\sqrt{d}}{d_{K,\edge}}\left(v_\edge-v_K-\overline{\nabla}_K v\cdot(
\overline{\x}_\edge-\x_K)\right)\bfn_{K,\edge}\quad\mbox{ on $K$},
\]
where $d_{K,\edge}$ is the orthogonal distance between $\x_K$ and $\edge$, $\overline{\x}_\edge$ is the center of mass of $\edge$, $\bfn_{K,\edge}$ is the outer normal to $K$ on $\edge$ and, denoting by $\edgescv$ the set of faces of $K$,
\[
\overline{\nabla}_K v = \frac{1}{|K|}\sum_{\edge\in\edgescv}|\edge|v_\edge\bfn_{K,\edge}.
\]
Once used in the gradient scheme \eqref{base.GS}, this HMM gradient discretisation gives rise to
a numerical method that can be applied on any mesh (including with hanging nodes, non-convex cells, etc.).
This scheme can also be re-interpreted as a finite volume method \cite[Section 13.3]{gdm}.
\begin{figure}[htb]
\begin{center}
\input{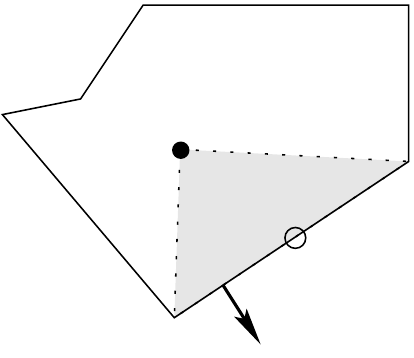_t}
\caption{Notations for the construction of the HMM gradient discretisation}
\label{fig:mesh}
\end{center}
\end{figure}

\subsection{Error estimates for the GDM for the Neumann problem}\label{subsec:errorpde}

The accuracy of a gradient scheme \eqref{base.GS} is measured by three quantities. The first one is a discrete Poincar\'e constant $C_\disc$, which ensures the \emph{coercivity} of the method.
\be\label{def.CD}
C_\disc := \max_{w\in X_{\disc}\setminus \{0\}} \frac{\norm{\Pi_\disc w}{}}{\norm{w}{\disc}}.
\ee
The second quantity is the interpolation error $S_\disc$, which
measures what is called, in the GDM framework, the \emph{GD-consistency} of $\disc$.
\be\label{def.SD}
\forall \varphi\in H^1(\O)\,,\;
S_\disc(\varphi)=\min_{w\in X_{\disc}}\left(\norm{\Pi_\disc w-\varphi}{}
	+\norm{\nabla_\disc w-\nabla\varphi}{}\right).
\ee

Finally, we measure the \emph{limit-conformity} of a GD by defining
\be\label{def.WD}
\begin{aligned}
	&\forall \bvarphi\in H^{\div}_0(\O)\,,\\
	&W_\disc(\bvarphi)=\max_{w\in X_{\disc}\setminus \{0\}} \frac{1}{\norm{w}{\disc}}
		\left|\int_\O (\Pi_\disc w \div(\bvarphi)	+\nabla_\disc w\cdot\bvarphi)\d\x \right|,
\end{aligned}
\ee
where $H^{\div}_0(\O)=\{\bvarphi\in L^2(\O)^d\,:\,\div(\bvarphi)\in L^2(\O)\,,\;
\gamma_{\bfn}(\bvarphi)=0\}$ with $\gamma_{\bfn}$ being the normal trace of $\bvarphi$ on $\dr\O$.

Using these quantities, an error estimate can be established for GS. We refer to  \cite{gdm} for a proof of the following theorem. Here and in the rest of the paper,
\begin{equation}\label{notation:lesssim}
\begin{aligned}
&X\lesssim Y\mbox{ means that }X\le CY\mbox{ for some $C$ depending}\\
&\mbox{only on $\O$, $A$ and an upper bound of $C_\disc$}.
\end{aligned}
\end{equation}

\begin{theorem}[Error estimate for the GDM] \label{th:error.est.PDE} Let $\disc$ be a GD in the
	sense of Definition \ref{def:GD}, let $\psi$ be the solution
	to \eqref{base_weak}, and let $\psi_\disc$  be the solution to \eqref{base.GS}. Then
	\be\label{est.PiD.nablaD}
	\norm{\Pi_\disc \psi_\disc-\psi}{}+\norm{\nabla_\disc \psi_\disc-\nabla\psi}{}
	\lesssim \WS_\disc(\psi),
	\ee
	where
	\be \label{def.ws}
	\WS_\disc(\psi)= W_\disc(A\nabla \psi)+S_\disc(\psi).
	\ee
\end{theorem}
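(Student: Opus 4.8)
The plan is to establish a Céa/Strang-type estimate in the GDM framework: interpolate $\psi$ into the zero-average space $X_{\disc,\star}$, derive an error equation, and close it by coercivity. First I would use the GD-consistency to build a good interpolant living in the right subspace. Let $w\in X_\disc$ realise the minimum in the definition \eqref{def.SD} of $S_\disc(\psi)$ (attained since $X_\disc$ is finite-dimensional), and correct its average using the element $1_\disc$ from Definition \ref{def:GD} by setting $v_\disc = w - \big(\dashint_\O \Pi_\disc w\,\d\x\big)1_\disc$. Because $\nabla_\disc 1_\disc=0$ and $\Pi_\disc 1_\disc=1$, one gets $\nabla_\disc v_\disc=\nabla_\disc w$ and $\dashint_\O\Pi_\disc v_\disc\,\d\x=0$, so $v_\disc\in X_{\disc,\star}$. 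Since $\psi\in H^1_\star(\O)$ has zero average, $\big|\dashint_\O\Pi_\disc w\,\d\x\big| = \big|\dashint_\O(\Pi_\disc w - \psi)\,\d\x\big| \le |\O|^{-1/2}\norm{\Pi_\disc w-\psi}{}$, whence both $\norm{\Pi_\disc v_\disc-\psi}{}$ and $\norm{\nabla_\disc v_\disc - \nabla\psi}{}$ are $\lesssim S_\disc(\psi)$.

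Next I would exploit limit-conformity to rewrite the right-hand side of \eqref{base.GS}. The crucial observation is that $A\nabla\psi\in H^{\div}_0(\O)$: indeed $\div(A\nabla\psi)=-F\in L^2(\O)$ and the homogeneous Neumann condition in \eqref{base} gives zero normal trace. Hence, for every $w_\disc\in X_{\disc,\star}$, the definition \eqref{def.WD} applied to $\bvarphi=A\nabla\psi$, together with $\div(A\nabla\psi)=-F$, yields
\[
\Big|\int_\O F\,\Pi_\disc w_\disc\,\d\x - \int_\O A\nabla\psi\cdot\nabla_\disc w_\disc\,\d\x\Big| \le W_\disc(A\nabla\psi)\,\norm{w_\disc}{\disc}.
\]
Combining this with the gradient scheme $a_\disc(\psi_\disc,w_\disc)=\int_\O F\,\Pi_\disc w_\disc\,\d\x$ and subtracting $a_\disc(v_\disc,w_\disc)$ gives the error equation
\[
\big|a_\disc(\psi_\disc-v_\disc,\,w_\disc)\big| \le \Big|\int_\O A(\nabla\psi-\nabla_\disc v_\disc)\cdot\nabla_\disc w_\disc\,\d\x\Big| + W_\disc(A\nabla\psi)\,\norm{w_\disc}{\disc}
\]
valid for all $w_\disc\in X_{\disc,\star}$.

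Now I would close the estimate by testing with the error itself. Take $w_\disc=\psi_\disc-v_\disc$, which lies in $X_{\disc,\star}$ since both $\psi_\disc$ and $v_\disc$ do; for such $w_\disc$ one has $\norm{w_\disc}{\disc}=\norm{\nabla_\disc w_\disc}{}$. Uniform ellipticity of $A$ bounds the left-hand side below by (a constant times) $\norm{\nabla_\disc w_\disc}{}^2$, while boundedness of $A$ and Cauchy--Schwarz bound the first term on the right by (a constant times) $\norm{\nabla\psi-\nabla_\disc v_\disc}{}\,\norm{\nabla_\disc w_\disc}{}$. Dividing through by $\norm{\nabla_\disc w_\disc}{}$ and inserting the interpolation bounds from the first step gives $\norm{\nabla_\disc(\psi_\disc-v_\disc)}{}\lesssim S_\disc(\psi)+W_\disc(A\nabla\psi)=\WS_\disc(\psi)$. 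The gradient part of \eqref{est.PiD.nablaD} then follows from $\norm{\nabla_\disc\psi_\disc-\nabla\psi}{}\le\norm{\nabla_\disc(\psi_\disc-v_\disc)}{}+\norm{\nabla_\disc v_\disc-\nabla\psi}{}$, and the function part from coercivity, $\norm{\Pi_\disc(\psi_\disc-v_\disc)}{}\le C_\disc\norm{\psi_\disc-v_\disc}{\disc}=C_\disc\norm{\nabla_\disc(\psi_\disc-v_\disc)}{}\lesssim\WS_\disc(\psi)$, combined with $\norm{\Pi_\disc v_\disc-\psi}{}\lesssim S_\disc(\psi)$.

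The main obstacle is not the coercivity argument, which is routine, but the bookkeeping forced by the pure-Neumann setting: one must guarantee that the test function $\psi_\disc-v_\disc$ genuinely lies in $X_{\disc,\star}$, so that $\norm{\cdot}{\disc}$ collapses to the gradient seminorm and both coercivity and the bound $\norm{\Pi_\disc\cdot}{}\le C_\disc\norm{\cdot}{\disc}$ apply. This is exactly what the corrected interpolant $v_\disc$, built using $1_\disc$, achieves; and the fact that the correction is itself $\lesssim S_\disc(\psi)$ — because $\psi$ has zero average — is what prevents it from polluting the final bound. The only analytic input beyond linear algebra is the identity $A\nabla\psi\in H^{\div}_0(\O)$, which legitimises the use of $W_\disc(A\nabla\psi)$.
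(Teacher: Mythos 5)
Your proof is correct and follows essentially the argument the paper relies on: the paper does not reprove Theorem \ref{th:error.est.PDE} but defers to the monograph \cite{gdm}, whose proof is exactly this C\'ea/Strang-type estimate combining the consistency interpolant (average-corrected via $1_\disc$ so that $\psi_\disc-v_\disc\in X_{\disc,\star}$ and $\norm{\cdot}{\disc}$ reduces to the gradient seminorm), the limit-conformity bound applied to $A\nabla\psi\in H^{\div}_0(\O)$, and coercivity. The only point worth noting is that your control of the average correction, $\big|\dashint_\O\Pi_\disc w\,\d\x\big|\lesssim\norm{\Pi_\disc w-\psi}{}$, correctly uses $\dashint_\O\psi\,\d\x=0$, which is precisely the bookkeeping the pure Neumann setting requires.
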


\begin{remark}[Rates of convergence] \label{rate.ellptic}
	For all classical low order methods based on meshes (such as $\mathbb{P}_1$ conforming
	and non-conforming finite element methods, finite volume methods, etc.), if $A$ is Lipschitz continuous and $\psi\in H^2(\O)$ then $\mathcal O(h)$ estimates can be obtained for $W_\disc(A\nabla \psi)$ and $S_\disc(\psi)$ \cite{gdm}.
	Theorem \ref{th:error.est.PDE} then gives a linear rate of convergence
	for these methods.
\end{remark}

\begin{remark} \label{avg_err_estimate}
	Note that Theorem \ref{th:error.est.PDE} also holds if we replace the zero average condition on $\psi$ and $\Pi_\disc \psi_\disc$ with $\dashint_\O \Pi_\disc \psi_\disc \d\x=\dashint_\O \psi \d\x.$ In this case, the estimate \eqref{est.PiD.nablaD} can be obtained by considering the translation of $\psi_\disc$ and $\psi$. Set $\widetilde{\psi}_\disc=\psi_\disc-c1_\disc $ and $\widetilde{\psi}=\psi-c1, $  where $c=\dashint_\O \Pi_\disc \psi_\disc \d\x=\dashint_\O \psi \d\x$ and $1$ is the constant function. Using Definition \ref{def:GD}, we find $\Pi_\disc \widetilde{\psi}_\disc=\Pi_\disc {\psi}_\disc-c$, $\nabla_\disc \widetilde{\psi}_\disc=\nabla_\disc {\psi}_\disc$ and $\nabla \widetilde{\psi}=\nabla {\psi}$. This gives $\dashint_\O \Pi_\disc \widetilde{\psi}_\disc \d\x=\dashint_\O \widetilde{\psi} \d\x=0.$ Applying Theorem \ref{th:error.est.PDE},
$$	\norm{\Pi_\disc \widetilde{\psi}_\disc-\widetilde{\psi}}{}+\norm{\nabla_\disc \widetilde{\psi}_\disc-\nabla\widetilde{\psi}}{}
	\lesssim \WS_\disc(\widetilde{\psi})$$
which implies
$$	\norm{\Pi_\disc {\psi}_\disc-{\psi}}{}+\norm{\nabla_\disc {\psi}_\disc-\nabla{\psi}}{}
	\lesssim \WS_\disc(\widetilde{\psi})=\WS_\disc({\psi}).$$
	
\end{remark}

The following stability result, useful to our analysis, is straightforward.

\begin{proposition}[Stability of the GDM]\label{prop.stab}
	Let $\underline{a}$ be a coercivity constant of $A$.
	If $\psi_\disc$ is the solution to the gradient scheme \eqref{base.GS}, then
	\be\label{stability}
	\norm{\nabla_\disc \psi_\disc}{}
	\le \frac{C_\disc}{\underline{a}}\norm{F}{}
\quad\mbox{	and  }\quad
	\norm{\Pi_\disc \psi_\disc}{} \le 
	\frac{C_\disc^2}{\underline{a}}\norm{F}{}.
	\ee
\end{proposition}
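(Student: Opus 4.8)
The plan is to run the standard energy (coercivity) argument for the gradient scheme \eqref{base.GS}. First I would test the scheme with $w_\disc=\psi_\disc$ itself, which is legitimate since the solution satisfies $\psi_\disc\in X_{\disc,\star}$. This gives
\[
a_\disc(\psi_\disc,\psi_\disc)=\int_\O F\,\Pi_\disc\psi_\disc\d\x .
\]
On the left-hand side I would invoke the uniform ellipticity of $A$, with coercivity constant $\underline{a}$, to obtain the lower bound $a_\disc(\psi_\disc,\psi_\disc)\ge\underline{a}\,\norm{\nabla_\disc\psi_\disc}{}^2$; on the right-hand side the Cauchy--Schwarz inequality yields the upper bound $\norm{F}{}\,\norm{\Pi_\disc\psi_\disc}{}$.

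The second ingredient is the observation that for $\psi_\disc\in X_{\disc,\star}$ the average term in the norm \eqref{def:norm} vanishes, so the discretisation norm collapses to the gradient seminorm: $\norm{\psi_\disc}{\disc}=\norm{\nabla_\disc\psi_\disc}{}$. I would then apply the discrete Poincar\'e inequality encoded in the definition \eqref{def.CD} of $C_\disc$, namely $\norm{\Pi_\disc\psi_\disc}{}\le C_\disc\norm{\psi_\disc}{\disc}=C_\disc\norm{\nabla_\disc\psi_\disc}{}$. Chaining the two bounds above produces
\[
\underline{a}\,\norm{\nabla_\disc\psi_\disc}{}^2\le\norm{F}{}\,\norm{\Pi_\disc\psi_\disc}{}\le C_\disc\norm{F}{}\,\norm{\nabla_\disc\psi_\disc}{} .
\]
Dividing by $\norm{\nabla_\disc\psi_\disc}{}$ (the degenerate case $\norm{\nabla_\disc\psi_\disc}{}=0$ forces $\psi_\disc=0$, since the norm reduces to the gradient seminorm, and both estimates then hold trivially) gives the first bound $\norm{\nabla_\disc\psi_\disc}{}\le\frac{C_\disc}{\underline{a}}\norm{F}{}$. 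Feeding this back into the Poincar\'e inequality $\norm{\Pi_\disc\psi_\disc}{}\le C_\disc\norm{\nabla_\disc\psi_\disc}{}$ immediately yields the second bound $\norm{\Pi_\disc\psi_\disc}{}\le\frac{C_\disc^2}{\underline{a}}\norm{F}{}$.

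There is essentially no genuine obstacle here, which is why the statement is flagged as \emph{straightforward}: it is a one-line energy estimate. The only point deserving a moment's care is the reduction of the full discretisation norm to the gradient seminorm, which relies precisely on the membership $\psi_\disc\in X_{\disc,\star}$ and hence on the zero-average structure built into \eqref{def:norm}; everything else is coercivity, Cauchy--Schwarz, and the defining property of $C_\disc$.
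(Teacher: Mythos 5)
Your argument is correct and is essentially identical to the paper's proof: test with $w_\disc=\psi_\disc$, use coercivity and Cauchy--Schwarz, note that the zero average makes $\norm{\psi_\disc}{\disc}=\norm{\nabla_\disc\psi_\disc}{}$, and invoke the definition of $C_\disc$ twice. The only difference is that you spell out the (trivial) degenerate case $\norm{\nabla_\disc\psi_\disc}{}=0$, which the paper leaves implicit.
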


\begin{proof}
	Take $w_\disc=\psi_\disc$ in \eqref{base.GS} and use the
	definition of $C_\disc$ to write
	\begin{align*}
	\underline{a}
	\norm{\nabla_\disc \psi_\disc}{}^2
	&\le \norm{F}{}\norm{\Pi_\disc \psi_\disc}{}
	\le C_\disc \norm{F}{}\norm{\psi_\disc}{\disc}.
	\end{align*}
	Since $\dashint_\O\Pi_\disc \psi_\disc\d\x=0$, recalling the Definition \eqref{def:norm}
	of $\norm{\cdot}{\disc}$ shows that $\norm{\psi_\disc}{\disc}=\norm{\nabla_\disc\psi_\disc}{}$
	and the proof of first estimate is complete. The second estimate follows from the
	definition of $C_\disc$. \end{proof}

\section{ GDM  for the control problem and main results}\label{sec:GDM:Control}

This section starts with a description of GDM for the optimal control problem and is followed by the basic error estimates and super-convergence results in Subsections \ref{err_est} and \ref{supercv}. The super-convergence results  for a post-processed control are presented.  
A discussion on post-processed controls and the projection relation between control and proper adjoint are presented in Subsection \ref{dis:ppu}. 

\subsection{GDM for the optimal control problem}\label{GDM:Control.def}

Let $\disc$ be a GD as in Definition \ref{def:GD}. The space $\Uh$ is defined as the space of piecewise constant functions on a mesh $\mesh$ of $\O$. The space $\Uadh=\Uad\cap\Uh$ is a finite dimensional subset of $\Uad$. A gradient scheme for \eqref{continuous_kkt} consists in seeking $(\by_{\disc}, \bp_{\disc}, \bu_{h})\in X_{\disc,\star} \times X_{\disc} \times \Uadh$, such that
\begin{subequations} \label{discrete_kkt}
	\begin{align}
	& a_{\disc}(\by_{\disc},w_{\disc}) = ( \bu_{h} + f, \Pi_\disc w_{\disc})
	&\forall \: w_{\disc} \in  X_{\disc,\star},  \label{discrete_state} \\
	& a_{\disc}(z_{\disc},\bp_{\disc}) = (\Pi_\disc \by_{\disc}-\yd, \Pi_\disc z_{\disc})&  \forall \: z_{\disc} \in  X_{\disc}, \label{discrete_adjoint} \\
	&(\Pi_\disc \bp_{\disc} +\alpha \bu_h,v_h-\bu_h)\geq  0
	&\forall \: v_h \in  \Uadh. \label{opt_discrete}
	\end{align}	
\end{subequations} 
As in the continuous KKT conditions \eqref{continuous_kkt}, these equations do not define
$\bp_\disc$ uniquely. One possible constraint that fixes a unique $\bp_\disc$ is described in Lemma
\ref{u_proj_c}. This particular choice ensures a simple projection relation between $\bp_{\disc}$ and $\bu_h$.

 Two projection operators play a major role throughout the paper: the orthogonal projection on piecewise constant functions on $\mesh$, namely $\Proj:L^1(\O)\to \Uh$ and the cut-off function $P_{[a,b]}:\R\to [a,b]$. They are defined as
 \be \label{def_projmesh}
 \forall v\in L^1(\O)\,,\:\forall K\in\mesh\,,\quad
 (\Proj v)_{|K} :=\dashint_K v(\x)\d\x\,,
 \ee
 \be \label{def_proj[a,b]}
 \forall s\in\R\,,\quad P_{[a,b]}(s) := \min(b, \max(a, s)).
 \ee
 
\subsection{Basic error estimate for the GDM for the control problem}\label{err_est}

To state the error estimates, we define the projection error $E_h$ by
\be \label{triple.norm}
\forall \: W\in L^2(\O)\,,\;E_h(W)=\norm{W-\Proj W}{}
\ee

{The proofs of the following basic error estimates are provided in Section \ref{sec:appendix}.
They follow by adapting the corresponding proofs in \cite{GDM_control} to account for the pure Neumann
boundary conditions and integral constraints.}

\begin{theorem}[Control estimate] \label{theorem.control}
	Let $\disc$ be a GD, $(\by,\bp,\bu)$ be a solution to \eqref{continuous_kkt}
	and $(\by_\disc,\bp_\disc,\bu_h)$ be a solution to \eqref{discrete_kkt} such that $\dashint_{\O}^{}\Pi_\disc \bp_\disc \d\x =\dashint_{\O}^{} \bp\d\x$. Then, recalling \eqref{notation:lesssim}, \eqref{def.ws} and \eqref{triple.norm}, there exists a constant $C$ depends only on $\alpha$ such that 
	\be\label{est.basic.u}
	\begin{aligned}
		\norm{\bu-\bu_h}{}\lesssim{}& C\left(E_h(\bp)+E_h(\bu)+\WS_\disc(\bp)+\WS_\disc(\by)\right).
	\end{aligned}
	\ee
\end{theorem}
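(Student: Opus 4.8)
The plan is to start from the two variational inequalities \eqref{opt_cont} and \eqref{opt_discrete}. Since $\bu_h\in\Uadh\subseteq\Uad$ we may test \eqref{opt_cont} with $v=\bu_h$, and since $\bu\in\Uad$ its projection $\Proj\bu$ lies in $\Uadh$ (cell-averaging preserves both the pointwise bounds $a\le\bu\le b$ and the zero mean), so we may test \eqref{opt_discrete} with $v_h=\Proj\bu$. Adding the two inequalities and using that $\bu-\Proj\bu$ is $L^2$-orthogonal to $\Uh\ni\bu_h$, I would obtain after rearrangement
\[
\alpha\norm{\bu-\bu_h}{}^2\le (\Pi_\disc\bp_\disc-\bp,\bu-\bu_h)+(\Pi_\disc\bp_\disc,\Proj\bu-\bu).
\]
The whole difficulty is to control the first scalar product, which a priori couples $\norm{\bu-\bu_h}{}$ to the adjoint error $\norm{\Pi_\disc\bp_\disc-\bp}{}$, itself driven by the state error, itself driven by $\norm{\bu-\bu_h}{}$; a naive Cauchy--Schwarz bound would therefore be circular.

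To break this circularity, I would introduce the auxiliary gradient-scheme solutions obtained by feeding the \emph{continuous} data into the state and adjoint schemes, $a_\disc(\widehat\by_\disc,w_\disc)=(\bu+f,\Pi_\disc w_\disc)$ and $a_\disc(z_\disc,\widehat\bp_\disc)=(\by-\yd,\Pi_\disc z_\disc)$, each normalised to have the same $\Pi_\disc$-average as its continuous counterpart. Since $\bu+f$ and $\by-\yd$ both have zero average these problems are well posed, and Theorem \ref{th:error.est.PDE} together with Remark \ref{avg_err_estimate} give $\norm{\Pi_\disc\widehat\by_\disc-\by}{}\lesssim\WS_\disc(\by)$ and $\norm{\Pi_\disc\widehat\bp_\disc-\bp}{}\lesssim\WS_\disc(\bp)$. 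The differences $\eta_\disc:=\by_\disc-\widehat\by_\disc$ and $\delta_\disc:=\bp_\disc-\widehat\bp_\disc$ then solve gradient schemes with the zero-average sources $\bu_h-\bu$ and $\Pi_\disc\by_\disc-\by$, so Proposition \ref{prop.stab} yields $\norm{\Pi_\disc\eta_\disc}{}\lesssim\norm{\bu-\bu_h}{}$ and, chaining the state and adjoint estimates, the a priori bound $\norm{\Pi_\disc\bp_\disc-\bp}{}\lesssim\norm{\bu-\bu_h}{}+\WS_\disc(\by)+\WS_\disc(\bp)$.

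The key step is a discrete duality between $\eta_\disc$ and $\delta_\disc$. Writing $(\Pi_\disc\bp_\disc-\bp,\bu-\bu_h)=(\Pi_\disc\delta_\disc,\bu-\bu_h)+(\Pi_\disc\widehat\bp_\disc-\bp,\bu-\bu_h)$, testing the $\eta_\disc$-scheme with $\delta_\disc$ and the $\delta_\disc$-scheme with $\eta_\disc$ (legitimate since the sources have zero average, so the relations extend from $X_{\disc,\star}$ to all of $X_\disc$ via $1_\disc$) and using the symmetry of $a_\disc$, I get $(\bu-\bu_h,\Pi_\disc\delta_\disc)=-(\Pi_\disc\by_\disc-\by,\Pi_\disc\eta_\disc)$. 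Decomposing $\Pi_\disc\by_\disc-\by=\Pi_\disc\eta_\disc+(\Pi_\disc\widehat\by_\disc-\by)$ produces a favourable sign, $(\Pi_\disc\delta_\disc,\bu-\bu_h)=-\norm{\Pi_\disc\eta_\disc}{}^2-(\Pi_\disc\widehat\by_\disc-\by,\Pi_\disc\eta_\disc)\le\tfrac14\norm{\Pi_\disc\widehat\by_\disc-\by}{}^2\lesssim\WS_\disc(\by)^2$, which is exactly what removes the circular dependence on $\norm{\bu-\bu_h}{}$. The remaining piece $(\Pi_\disc\widehat\bp_\disc-\bp,\bu-\bu_h)$ is bounded by $\WS_\disc(\bp)\norm{\bu-\bu_h}{}$, whose quadratic part is absorbed by Young's inequality.

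Finally, for $(\Pi_\disc\bp_\disc,\Proj\bu-\bu)$ I would again use the orthogonality $\Proj\bu-\bu\perp\Uh$ to write it as $-\big((I-\Proj)\Pi_\disc\bp_\disc,(I-\Proj)\bu\big)$, bound it by $E_h(\bu)\big(\norm{\Pi_\disc\bp_\disc-\bp}{}+E_h(\bp)\big)$, and insert the a priori adjoint bound above; the resulting $\norm{\bu-\bu_h}{}E_h(\bu)$ contribution is once more absorbed by Young's inequality. Collecting all estimates, moving the $\tfrac{\alpha}{2}\norm{\bu-\bu_h}{}^2$ terms to the left-hand side, and taking square roots gives the claimed bound with a constant depending only on $\alpha$. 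The main obstacle, and the crux of the argument, is precisely the circularity in the coupled state/adjoint/control errors; the discrete duality identity of the third paragraph, turning the dangerous cross term into a negative square plus a genuinely higher-order remainder, is what makes the estimate close.
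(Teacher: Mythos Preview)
Your proposal is correct and follows essentially the same route as the paper's proof: you introduce the same auxiliary discrete state and adjoint (the paper's $y_\disc(\bu)$, $p_\disc(\bu)$), exploit the same discrete duality between $\eta_\disc$ and $\delta_\disc$ to turn the dangerous cross term into $-\norm{\Pi_\disc\eta_\disc}{}^2$ plus a $\WS_\disc(\by)^2$ remainder (the paper's \eqref{inter5}), and absorb the remaining pieces via Young's inequality. Your treatment of the projection term $(\Pi_\disc\bp_\disc,\Proj\bu-\bu)$ is slightly more direct than the paper's three-way splitting \eqref{inter4}--\eqref{est.T3}, but the underlying ingredients and the overall structure are the same.
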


\begin{proposition}[State and adjoint error estimates]\label{prop.state.adj}
	Let $\disc$ be a GD, 
	$(\by, \bp, \bu)$ be a solution to \eqref{continuous_kkt} and
	$(\by_\disc, \bp_\disc, \bu_h)$ be a solution to \eqref{discrete_kkt}. Assume that $\dashint_{\O}^{}\Pi_\disc \bp_\disc \d\x =\dashint_{\O}^{} \bp\d\x$. Then the following error estimates hold:
	\begin{align}
	\label{est.basic.y}
	\norm{\Pi_\disc \by_\disc-\by}{}+\norm{\nabla_\disc \by_\disc-\nabla\by}{}   \lesssim{}& \norm{\bu-\bu_h}{}+ \WS_\disc(\by),\\
	\label{est.basic.p}
	\norm{\Pi_\disc \bp_\disc-\bp}{}+\norm{\nabla_\disc \bp_\disc-\nabla\bp}{} \lesssim{}& \norm{\bu-\bu_h}{} + \WS_\disc(\by) + \WS_\disc(\bp).
	\end{align}		
\end{proposition}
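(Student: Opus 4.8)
The plan is to treat each estimate by decoupling two sources of error: the \emph{consistency} error of the gradient scheme evaluated on the continuous data, controlled by Theorem~\ref{th:error.est.PDE}, and the \emph{data-perturbation} error coming from replacing the continuous control and state by their discrete counterparts, controlled by the stability estimate of Proposition~\ref{prop.stab}. The crucial bookkeeping throughout is that every source term appearing below has zero average, so that the hypotheses of Theorem~\ref{th:error.est.PDE} and Proposition~\ref{prop.stab} are met; this is precisely where the assumption $\dashint_\O \Pi_\disc \bp_\disc \d\x = \dashint_\O \bp \d\x$ enters.

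For the state estimate \eqref{est.basic.y}, I would introduce the auxiliary discrete state $\widehat{y}_\disc\in X_{\disc,\star}$ solving the gradient scheme \eqref{base.GS} with the \emph{continuous} source $F=\bu+f$, that is $a_\disc(\widehat{y}_\disc,w_\disc)=(\bu+f,\Pi_\disc w_\disc)$ for all $w_\disc\in X_{\disc,\star}$. Since $\bu$ and $f$ have zero average, so does $\bu+f$, and $\by$ is exactly the continuous solution \eqref{base_weak} for this source; Theorem~\ref{th:error.est.PDE} then gives $\norm{\Pi_\disc \widehat{y}_\disc-\by}{}+\norm{\nabla_\disc \widehat{y}_\disc-\nabla\by}{}\lesssim\WS_\disc(\by)$. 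Subtracting this equation from \eqref{discrete_state} yields $a_\disc(\by_\disc-\widehat{y}_\disc,w_\disc)=(\bu_h-\bu,\Pi_\disc w_\disc)$ for all $w_\disc\in X_{\disc,\star}$; as $\bu_h,\bu\in\Uad$ both have zero average, $\bu_h-\bu$ does too, and Proposition~\ref{prop.stab} applied to $\by_\disc-\widehat{y}_\disc$ gives $\norm{\nabla_\disc(\by_\disc-\widehat{y}_\disc)}{}+\norm{\Pi_\disc(\by_\disc-\widehat{y}_\disc)}{}\lesssim\norm{\bu-\bu_h}{}$. Combining the two bounds by the triangle inequality delivers \eqref{est.basic.y}.

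For the adjoint estimate \eqref{est.basic.p}, the symmetry of $A$ lets me read \eqref{adj_cont} and \eqref{discrete_adjoint} as Neumann problems with the co-state in the solution slot. I would define the auxiliary discrete adjoint $\widehat{p}_\disc$ solving $a_\disc(z_\disc,\widehat{p}_\disc)=(\by-\yd,\Pi_\disc z_\disc)$ for all $z_\disc\in X_\disc$, with its otherwise free additive constant fixed so that $\dashint_\O\Pi_\disc \widehat{p}_\disc\d\x=\dashint_\O\bp\d\x$. Because $\by$ and $\yd$ have zero average, so does $\by-\yd$, and $\bp$ is the continuous solution for this source; applying Theorem~\ref{th:error.est.PDE} in the form of Remark~\ref{avg_err_estimate} (matched nonzero averages) gives $\norm{\Pi_\disc \widehat{p}_\disc-\bp}{}+\norm{\nabla_\disc \widehat{p}_\disc-\nabla\bp}{}\lesssim\WS_\disc(\bp)$. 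Subtracting from \eqref{discrete_adjoint} yields $a_\disc(z_\disc,\bp_\disc-\widehat{p}_\disc)=(\Pi_\disc\by_\disc-\by,\Pi_\disc z_\disc)$; here $\Pi_\disc\by_\disc$ (with $\by_\disc\in X_{\disc,\star}$) and $\by$ both have zero average, so the source has zero average, while the average-matching assumption together with our choice for $\widehat{p}_\disc$ makes $\bp_\disc-\widehat{p}_\disc$ itself zero-average, so Proposition~\ref{prop.stab} applies and gives $\norm{\nabla_\disc(\bp_\disc-\widehat{p}_\disc)}{}+\norm{\Pi_\disc(\bp_\disc-\widehat{p}_\disc)}{}\lesssim\norm{\Pi_\disc\by_\disc-\by}{}$. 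Bounding this right-hand side by the already-proved estimate \eqref{est.basic.y} and using the triangle inequality gives \eqref{est.basic.p}.

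The routine parts are the invocations of Theorem~\ref{th:error.est.PDE} and Proposition~\ref{prop.stab}; the only genuinely delicate point is the average bookkeeping, ensuring that each auxiliary problem is well posed and that every perturbation source is zero-average so that the stability estimate is legitimately applied. This is exactly what the hypothesis $\dashint_\O\Pi_\disc\bp_\disc\d\x=\dashint_\O\bp\d\x$ secures, by letting us fix the free constant in both $\bp$ and $\widehat{p}_\disc$ consistently with the discrete co-state.
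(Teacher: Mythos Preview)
Your proposal is correct and follows essentially the same approach as the paper: the auxiliary functions $\widehat{y}_\disc$ and $\widehat{p}_\disc$ are precisely the paper's $y_\disc(\bu)$ and $p_\disc(\bu)$ from \eqref{discrete_aux}, and the split into a consistency piece handled by Theorem~\ref{th:error.est.PDE} (with Remark~\ref{avg_err_estimate} for the adjoint) plus a data-perturbation piece handled by Proposition~\ref{prop.stab} is exactly what the paper does. Your explicit bookkeeping of the zero-average conditions is, if anything, more carefully spelled out than in the paper's terse proof.
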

\begin{remark}[Rates of convergence for the control problem]\label{rates.control}
As in Remark \ref{rate.ellptic}, if $A$ is Lipschitz continuous and $(\by, \bp, \bu)\in H^2(\O)^2 \times H^1(\O)$ then \eqref{est.basic.u}, \eqref{est.basic.y} and \eqref{est.basic.p} give linear rates of convergence for all classical first-order methods.
\end{remark}
\subsection{Super-convergence for post-processed controls} \label{supercv}

In this subsection, we define the post-processed continuous and discrete controls (see \eqref{Projection_ppu}) and state the super-convergence results. The proofs are presented in Section \ref{sec:appendix}.

We make here the following assumptions.
\begin{itemize}
	\item[\assum{1}][\emph{Interpolation operator}]
	For each $w\in H^2(\O)$, there exists $w_\mesh\in L^2(\O)$ such that:\\
\noindent	i) If $w \in H^2(\O)$ solves $-\div(A\nabla w)=g\in H^1(\O)$, and $w_\disc$ is the solution to the corresponding GS
	with $\dashint_\O\Pi_\disc w_\disc\d\x=\dashint_\O w\d\x$, then
	\be\label{state:scv}
	\norm{\Pi_\disc w_\disc-w_\mesh}{}  \lesssim  h^2\norm{g}{H^1(\O)}.
	\ee
\noindent	ii) For any $w\in H^2(\O)$, it holds
	\be\label{prop.M.1}
	\begin{aligned}
		\forall v_\disc \in X_{\disc}\,,\;
		\big|(w-w_\mesh,\Pi_\disc v_\disc)\big|
		\lesssim   h^2\norm{w}{H^2(\O)}\norm{\Pi_\disc v_\disc}{}\,,
	\end{aligned}
	\ee
	\be\label{prop.M.2}
	\norm{\Proj(w-w_\mesh)}{}\lesssim   h^2\norm{w}{H^2(\O)}.
	\ee
	\item[\assum{2}] The estimate $ \norm{\Pi_\disc v_\disc-\Proj(\Pi_\disc v_\disc)}{}  \lesssim  h\norm{\nabla_\disc v_\disc}{}$ holds for any $v_\disc\in X_{\disc}$.
	\item[\assum{3}] [\emph{Discrete Sobolev imbedding}] For all $v_\disc \in X_{\disc}$, it holds
	\[
	\norm{\Pi_\disc v_\disc}{L^{2^*}(\O)} \lesssim   \norm{ v_\disc}{\disc},
	\]
	where $2^*$ is a Sobolev exponent of 2, that is, $2^*\in [2,\infty)$ if $d=2$, and
	$2^*=6$ if $d=3$.
\end{itemize}

Let
\[
	\mesh_2=\{K\in\mesh\,:\,\mbox{$\bu=a$ a.e.\ on $K$, or $\bu=b$ a.e. on $K$,
		or $a<\bu<b$ a.e.\ on $K$}\} ,
\]
and $\mesh_1=\mesh\setminus\mesh_2$. That is, $\mesh_1$ is the set of cells where $\bu$ crosses at least
one constraint $a$ or $b$.
For $i=1,2$, we let $\O_{i,\mesh}={\rm int}(\cup_{K\in\mesh_i}\overline{K})$.
The space $W^{1,\infty}(\mesh_1)$ is the usual broken Sobolev space,
endowed with its broken norm. Our last assumption is:
\begin{itemize}
	\item[\assum{4}] $|\O_{1,\mesh}|\lesssim h$ and $\bu_{|\O_{1,\mesh}}\in W^{1,\infty}(\mesh_1)$.
\end{itemize}
\medskip

Possible choices of the mapping $w\mapsto w_\mesh$, depending on the considered gradient discretisation (that is, the considered numerical method), is discussed in Remark \ref{remark:supercv} below. Note that the assumptions \assum{1}--\assum{4} are similar to that in \cite{GDM_control} with $X_{\disc,0}$ substituted by $X_{\disc}$, and an additional average condition in $\assum{1}$. We also refer to \cite{GDM_control} for a detailed discussion on \assum{1}--\assum{4}.

\medskip

Assuming $\bp \in H^2(\O)$ (see Theorem \ref{thm.super-convergence}) and letting $\bp_\mesh$ be defined as in \assum{1}, the post-processed continuous and discrete controls are given by
\begin{equation} 
\begin{aligned}
\tu(\x)={}&P_{[a,b]}\left(-\frac{1}{\alpha}\bp_\mesh(\x) \right)\quad\mbox{ and }\quad
\tu_{h}(\x)={}P_{[a,b]}\left(-\frac{1}{\alpha} \Pi_\disc \bp_\disc(\x)\right).
\end{aligned}
\label{Projection_ppu}
\end{equation}
For a detailed discussion on the post-processed controls, we refer the reader to Subsection \ref{dis:ppu}.

\begin{remark}[Choice of $w_\mesh$ in \assum{1}] \label{remark:supercv}
When  $\Pi_\disc$ is a piecewise linear reconstruction, the super-convergence result \eqref{state:scv} usually holds with $w_\mesh=w$. This is for example well-known for conforming and non-conforming $\mathbb{P}_1$ FE method. When $\Pi_\disc v_\disc$ is piecewise constant on $\mesh$ for all $v_\disc\in X_{\disc}$, the super-convergence \eqref{state:scv} requires to project the exact solution on piecewise constant functions on the mesh. This is usually done by setting $w_\mesh(\x)=\dashint_K w(\x)\d\x$ for all $\x\in K$ and all $K\in\mesh$ (or, equivalently at order $\mathcal O(h^2)$, $w_\mesh(\x)=w(\overline{\x}_K)$
with $\overline{\x}_K$ the center of mass of $K$). With this choice, the super-convergence result is known, e.g., for mixed/hybrid and nodal mimetic finite difference schemes (see \cite{mimeticfdm,jd_nn}).
As a consequence,
		\begin{itemize}
			\item For FE methods, $\tu=P_{[a,b]}(-\alpha^{-1}\bp)$.
			\item For mimetic finite difference methods, $\tu_{|K}=P_{[a,b]}(-\alpha^{-1}\bp(\overline{\x}_K))$
for all $K\in\mesh$.
		\end{itemize}	
	\end{remark}

For $K\in\mesh$ and $\overline{\x}_K$ the center of mass of $K$, let $\rho_K=\max\{r>0\,:\,B(\overline{\x}_K,r)\subset K\}$
be the maximal radius of balls centred at $\overline{\x}_K$ and included in $K$.
Fixing $\eta>0$ such that
\be\label{reg:mesh}
\forall K\in\mesh\,,\; \eta\ge \frac{{\rm diam}(K)}{\rho_K}\,,
\ee
we use the following extension of the notation \eqref{notation:lesssim}:
\[
\begin{aligned}
&X\lesssim_{\eta} Y\mbox{ means that }X\le CY\mbox{ for some $C$ depending}\\
&\mbox{only on $\O$, $A$, an upper bound of $C_\disc$, and $\eta$}.
\end{aligned}
\]
\begin{theorem}[Super-convergence for post-processed controls I] \label{thm.super-convergence}
	Let $\disc$ be a GD and $\mesh$ be a mesh.
	Assume that 
	\begin{itemize}
		\item \assum{1}--\assum{4} hold,
		\item  $\by$ and $\bp$ belong to $H^2(\O)$,
		\item $\yd$ and $f$ belong to $H^1(\O)$,
	\end{itemize}
	and let $\tu$, $\tu_h$ be the post-processed controls defined by \eqref{Projection_ppu} where $\bp$ and $\bp_\disc$ are chosen such that $\dashint_{\O}^{}\Pi_\disc \bp_\disc \d\x =\dashint_{\O}^{} \bp\d\x$.
	Then there exists $C$ depending only on $\alpha$ in \eqref{def:cost} such that
	\be\label{eq:supercv}
	\norm{\tu-\tu_{h}}{} \lesssim_{\eta}Ch^{2-\frac{1}{2^*}}\norm{\bu}{W^{1,\infty}(\mesh_1)}
	+Ch^2\mathcal F(\yd,f,\by,\bp),
	\ee
	\begin{align*}
	\mathcal F(\yd,f,\by,\bp)={}\norm{\yd}{H^1(\O)}+\norm{f}{H^1(\O)}+\norm{\by}{H^2(\O)}+\norm{\bp}{H^2(\O)}.
	\end{align*}
\end{theorem}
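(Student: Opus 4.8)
The plan is to turn the post-processed control error into an adjoint error through the $1$-Lipschitz continuity of $P_{[a,b]}$, namely $\norm{\tu-\tu_h}{}\le\alpha^{-1}\norm{\bp_\mesh-\Pi_\disc\bp_\disc}{}$, and then to control $\norm{\bp_\mesh-\Pi_\disc\bp_\disc}{}$ by a genuinely second-order term plus the projected control error $\norm{\Proj\bu-\bu_h}{}$. To decouple the adjoint from the control error I first introduce the auxiliary discrete pair $(\by_\disc^{\star},\bp_\disc^{\star})$ computed from the \emph{exact} control $\bu$: $\by_\disc^{\star}$ solves \eqref{discrete_state} with $\bu_h$ replaced by $\bu$, and $\bp_\disc^{\star}$ solves \eqref{discrete_adjoint} with $\Pi_\disc\by_\disc$ replaced by $\Pi_\disc\by_\disc^{\star}$ (same average as $\bp$). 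Since $-\div(A\nabla\by)=\bu+f$ and $-\div(A\nabla\bp)=\by-\yd$ have right-hand sides bounded in $H^1(\O)$ by $\mathcal F(\yd,f,\by,\bp)$ (using that $\bu=P_{[a,b]}(-\alpha^{-1}\bp)\in H^1$), repeated use of \assum{1} (the super-convergence \eqref{state:scv} and the consistency estimate \eqref{prop.M.1} to absorb $\by-\by_\mesh$) together with Proposition \ref{prop.stab} yields the clean, control-error-free bound $\norm{\bp_\mesh-\Pi_\disc\bp_\disc^{\star}}{}\lesssim h^2\mathcal F$.

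It then remains to treat the \emph{feedback} term $\norm{\Pi_\disc\bp_\disc^{\star}-\Pi_\disc\bp_\disc}{}$, which by linearity and Proposition \ref{prop.stab} is bounded by $\norm{\Pi_\disc\by_\disc^{\star}-\Pi_\disc\by_\disc}{}$, hence by $\norm{\bu-\bu_h}{}$; so everything hinges on a super-convergent bound for $\norm{\Proj\bu-\bu_h}{}$. Testing the discrete variational inequality \eqref{opt_discrete} with $v_h=\Proj\bu\in\Uadh$ (admissible because $a\le\Proj\bu\le b$ and $\Proj$ preserves the zero average) gives directly
\[
\alpha\norm{\Proj\bu-\bu_h}{}^2\le\big(\Pi_\disc\bp_\disc+\alpha\Proj\bu,\,\Proj\bu-\bu_h\big),
\]
and I then split the right-hand side along the identity $\Pi_\disc\bp_\disc+\alpha\Proj\bu=(\Pi_\disc\bp_\disc-\Pi_\disc\bp_\disc^{\star})+(\Pi_\disc\bp_\disc^{\star}-\bp)+(\bp+\alpha\bu)+\alpha(\Proj\bu-\bu)$, where the pointwise relation $\bu=P_{[a,b]}(-\alpha^{-1}\bp)$ of Lemma \ref{u_proj_c} (for the properly translated $\bp$) is used to read off the structure of $\bp+\alpha\bu$.

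The four pieces are handled as follows. The last one vanishes since $\Proj\bu-\bu$ is $L^2$-orthogonal to the piecewise constant $\Proj\bu-\bu_h$. The piece $(\Pi_\disc\bp_\disc^{\star}-\bp,\Proj\bu-\bu_h)$ is $\lesssim h^2\mathcal F\,\norm{\Proj\bu-\bu_h}{}$ by the clean estimate above, \eqref{prop.M.2} applied to $\bp-\bp_\mesh$, and Cauchy--Schwarz. For the feedback piece I write $\Proj\bu-\bu_h=(\bu-\bu_h)+(\Proj\bu-\bu)$: the monotonicity identity $(\Pi_\disc\bp_\disc-\Pi_\disc\bp_\disc^{\star},\bu-\bu_h)=-\norm{\Pi_\disc\by_\disc-\Pi_\disc\by_\disc^{\star}}{}^2\le0$ (obtained by testing the state and adjoint difference equations against one another, the zero average of $\bu-\bu_h$ removing the additive ambiguity of the adjoint) lets me \emph{discard} the term paired with $\bu-\bu_h$, while the term paired with the mean-zero $\Proj\bu-\bu$ is $\lesssim h^2$ via \assum{2} and the basic bound $\norm{\bu-\bu_h}{}\lesssim h$ of Theorem \ref{theorem.control}. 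The decisive piece is $(\bp+\alpha\bu,\Proj\bu-\bu_h)$: on $\mesh_2$ the complementarity forces $\bp+\alpha\bu$ to vanish on inactive cells and to carry the sign opposite to $\Proj\bu-\bu_h$ on active cells, so its $\mesh_2$-contribution is $\le0$ and is dropped; on $\mesh_1$ one uses $|\bp+\alpha\bu|\lesssim h\norm{\bu}{W^{1,\infty}(\mesh_1)}$ together with $|\O_{1,\mesh}|\lesssim h$ from \assum{4}, H\"older with exponents $(2^*)'$ and $2^*$, and the discrete Sobolev embedding \assum{3} (which keeps $\norm{\Proj\bu-\bu_h}{L^{2^*}(\O)}$ under control) to extract exactly $h^{2-\frac1{2^*}}\norm{\bu}{W^{1,\infty}(\mesh_1)}$. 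Collecting the pieces and absorbing the quadratic contributions by Young's inequality yields the super-convergence of $\norm{\Proj\bu-\bu_h}{}$, hence of $\norm{\tu-\tu_h}{}$.

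The main obstacle is the apparent circular dependence between the two errors: controlling $\norm{\bp_\mesh-\Pi_\disc\bp_\disc}{}$ requires $\norm{\Proj\bu-\bu_h}{}$, yet the natural estimate of $\norm{\Proj\bu-\bu_h}{}$ reintroduces the adjoint error through $\Pi_\disc\bp_\disc$. The device that breaks the loop is the pair consisting of the exact-control adjoint $\bp_\disc^{\star}$ and the monotonicity identity: isolating $\bp_\disc^{\star}$ converts the uncontrollable part into a quantity of \emph{favorable sign} that can simply be thrown away, leaving only honest $\mathcal O(h^2)$ terms and the constraint-crossing cells. Sharpening the rate on those cells from the crude $h^{3/2}$ to $h^{2-\frac1{2^*}}$ is the second delicate point, and is precisely where the discrete Sobolev estimate \assum{3} and the smallness $|\O_{1,\mesh}|\lesssim h$ of \assum{4} are needed.
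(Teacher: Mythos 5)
Your overall architecture tracks the paper's quite closely (Lipschitz reduction of $\norm{\tu-\tu_h}{}$ to the adjoint error, an auxiliary discrete pair driven by the exact control, a monotonicity identity that discards the feedback term by sign, and a split of the cells into $\mesh_1$ and $\mesh_2$), but the step where you extract the rate $h^{2-\frac{1}{2^*}}$ contains two genuine gaps. First, you invoke \assum{3} to control $\norm{\Proj\bu-\bu_h}{L^{2^*}(\O)}$, but \assum{3} only bounds $\norm{\Pi_\disc v_\disc}{L^{2^*}(\O)}$ for $v_\disc\in X_\disc$; the difference $\Proj\bu-\bu_h$ is an arbitrary element of $\Uh$, not a reconstruction $\Pi_\disc v_\disc$, and no $\norm{\cdot}{\disc}$-type quantity is available to feed into the embedding. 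The paper arranges the duality the other way around precisely to avoid this: the $L^{2^*}$ norm falls on $\Pi_\disc w_\disc$ with $w_\disc=p_\disc^*(\bu)-p_\disc^*(\hat{u})\in X_{\disc,\star}$ (so $\norm{w_\disc}{\disc}=\norm{\nabla_\disc w_\disc}{}$ is controlled by stability), while the factor supported on $\O_{1,\mesh}$ is $\Proj\bu-\hat{u}$, measured only in $L^2$. Second, your pointwise bound $|\bp+\alpha\bu|\lesssim h\norm{\bu}{W^{1,\infty}(\mesh_1)}$ on $\O_{1,\mesh}$ does not follow from the hypotheses: writing $\bu=P_{[a,b]}(s)$ with $s=-\alpha^{-1}\bp+\oc$, one has $\bp+\alpha\bu=\alpha\oc+\alpha\big(P_{[a,b]}(s)-s\big)$, and on the active portion of a crossing cell the deviation of $s$ from $[a,b]$ is governed by $\nabla\bp$ (since $\nabla\bu=0$ there), so you would need $\bp\in W^{1,\infty}(\O_{1,\mesh})$ — not implied by $\bp\in H^2(\O)$ for $d\ge 2$ and not assumed. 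The paper sidesteps this by comparing, on $\mesh_1$, two quantities built from $\bu$ alone (the cell average $\Proj\bu$ and the centroid value $\hat{u}=\bu(\overline{\x}_K)$, for which $\norm{\Proj\bu-\hat{u}}{L^\infty(K)}\le h\norm{\bu}{W^{1,\infty}(K)}$ is immediate), and only invokes $\bp$ on $\mesh_2$, where $\Proj\bu-\hat{u}=\alpha^{-1}(\hat{p}-\Proj\bp)$ and an $H^2$ midpoint estimate suffices.

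Two further points. Bounding the feedback term $\norm{\Pi_\disc\bp_\disc^{\star}-\Pi_\disc\bp_\disc}{}$ by $\norm{\bu-\bu_h}{}$ via Proposition \ref{prop.stab} alone caps your final estimate at $O(h)$, since $\norm{\bu-\bu_h}{}\ge\norm{\bu-\Proj\bu}{}$ is generically of order $h$; to reduce to $\norm{\Proj\bu-\bu_h}{}$ plus $O(h^2)$ you must run the duality argument and exploit the orthogonality of $\Proj$ together with \assum{2}, which your sketch does not spell out. Also, the appeal to \enquote{the basic bound $\norm{\bu-\bu_h}{}\lesssim h$} imports the extra hypothesis $\WS_\disc(\varphi)\lesssim h$, which is not among \assum{1}--\assum{4}; the paper's proof is deliberately self-contained in this respect (and where you use it, the crude bound $\norm{\bu-\bu_h}{}\lesssim 1$ would suffice). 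Your handling of the offset $\oc$ is correct — the constant pairs to zero against the mean-zero $\Proj\bu-\bu_h$ — but that is exactly the point that needs to be stated, since it is the place where the pure Neumann constraint enters.
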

\begin{theorem}[Super-convergence for post-processed controls II] \label{thm.fullsuper-convergence}
	Let the assumptions and notations of Theorem \ref{thm.super-convergence} hold, except \assum{3} which is replaced
	by: 
	\be\label{Linfty.est}
	\begin{aligned}
		&\mbox{there exists $\delta>0$ such that, for any $F\in L^2(\O)$,}\\
		&\mbox{the solution $\psi_D$ to \eqref{base.GS} satisfies $\norm{\Pi_D \psi_D}{L^\infty(\O)}
			\le \delta \norm{F}{}$.}
	\end{aligned}
	\ee
	Then there exists $C$ depending only on $\alpha$ and $\delta$ such that
	\be \label{eq:fullsupercv}
	\norm{\tu-\tu_h}{}\lesssim_{\eta} C h^2\left[\norm{\bu}{W^{1,\infty}(\mesh_1)}+\mathcal F(\yd,f,\by,\bp)\right].
	\ee
\end{theorem}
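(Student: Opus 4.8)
The plan is to re-run the proof of Theorem \ref{thm.super-convergence} essentially verbatim, altering only the single estimate in which the discrete Sobolev imbedding \assum{3} is responsible for the loss of the factor $h^{-1/2^*}$ relative to the target rate $h^2$. The starting point is the same reduction: since $P_{[a,b]}$ is $1$-Lipschitz, the post-processed controls \eqref{Projection_ppu} satisfy the pointwise contraction $|\tu-\tu_h|\le \alpha^{-1}|\bp_\mesh-\Pi_\disc\bp_\disc|$, so that $\norm{\tu-\tu_h}{}\le \alpha^{-1}\norm{\bp_\mesh-\Pi_\disc\bp_\disc}{}$ and everything reduces to the proper-adjoint error. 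This error is split through the auxiliary gradient schemes with exact data: the discretisation-consistency contribution is $\mathcal O(h^2)\mathcal F(\yd,f,\by,\bp)$ by \assum{1}(i), while the data-perturbation contribution is driven successively by the state error $\Pi_\disc\by_\disc-\by$ and ultimately by a weak (dual) super-convergence of the control $\bu_h-\bu$ extracted from the continuous and discrete variational inequalities \eqref{opt_cont}/\eqref{opt_discrete} and the projection relation of Lemma \ref{u_proj_c}. In the proof of Theorem \ref{thm.super-convergence} all these contributions come out as genuine $\mathcal O(h^2)\mathcal F(\yd,f,\by,\bp)$ \emph{except one}, namely the term carrying the regularity of $\bu$ across the constraint interface, which is localised on $\O_{1,\mesh}$.

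The whole modification concerns precisely that localised term. It is the $L^2(\O_{1,\mesh})$-norm of a reconstructed Neumann gradient-scheme solution $\Pi_\disc\zeta_\disc$ (a solution of \eqref{base.GS}) whose source $F_\zeta$ is supported in $\O_{1,\mesh}$ and is pointwise of size $\lesssim h\norm{\bu}{W^{1,\infty}(\mesh_1)}$, since it originates from the projection defect $\bu-\Proj\bu$ on the cells of $\mesh_1$; using \assum{4} this gives $\norm{F_\zeta}{}\lesssim |\O_{1,\mesh}|^{1/2}h\norm{\bu}{W^{1,\infty}(\mesh_1)}\lesssim h^{3/2}\norm{\bu}{W^{1,\infty}(\mesh_1)}$. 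In Theorem \ref{thm.super-convergence} one estimates $\norm{\Pi_\disc\zeta_\disc}{L^2(\O_{1,\mesh})}\le |\O_{1,\mesh}|^{\frac12-\frac1{2^*}}\norm{\Pi_\disc\zeta_\disc}{L^{2^*}(\O)}$ and then controls the $L^{2^*}$-norm by \assum{3}, producing the exponent $2-\tfrac1{2^*}$. Here I would instead write $\norm{\Pi_\disc\zeta_\disc}{L^2(\O_{1,\mesh})}\le |\O_{1,\mesh}|^{1/2}\norm{\Pi_\disc\zeta_\disc}{L^\infty(\O)}$ and invoke the $L^\infty$ stability \eqref{Linfty.est} to get $\norm{\Pi_\disc\zeta_\disc}{L^\infty(\O)}\le \delta\norm{F_\zeta}{}$. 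With $|\O_{1,\mesh}|\lesssim h$ from \assum{4}, this yields $h^{1/2}\cdot\delta\, h^{3/2}\norm{\bu}{W^{1,\infty}(\mesh_1)}=\mathcal O(h^2)\norm{\bu}{W^{1,\infty}(\mesh_1)}$; the missing factor $h^{1/2^*}$ is recovered, the constant now depends additionally on $\delta$, and \eqref{eq:fullsupercv} follows.

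The main obstacle is not analytic but bookkeeping: one must check that this $\O_{1,\mesh}$-term is genuinely the \emph{unique} place where the proof of Theorem \ref{thm.super-convergence} falls short of $h^2$, so that replacing only this one estimate is legitimate and no other contribution secretly also relies on \assum{3}. Concretely, I would carefully track the source $F_\zeta$ to confirm that it is localised to $\O_{1,\mesh}$ and scales like $h\norm{\bu}{W^{1,\infty}(\mesh_1)}$, and verify that $\Pi_\disc\zeta_\disc$ really solves a plain Neumann gradient scheme \eqref{base.GS}, so that the stability hypothesis \eqref{Linfty.est} applies to it directly. All the remaining ingredients — the \assum{1}(i) consistency bounds, the variational-inequality/projection identity yielding the weak control super-convergence, and the state–adjoint coupling estimates of Proposition \ref{prop.state.adj} — are reused unchanged from the proof of Theorem \ref{thm.super-convergence}, which makes the present theorem a genuine localisation refinement rather than a new argument.
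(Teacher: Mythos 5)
Your overall strategy is the paper's: the integral over $\O_{1,\mesh}$ in Step~2 of the proof of Theorem \ref{thm.super-convergence} (estimate \eqref{a21.ineq}) is indeed the unique place where \assum{3} enters and the unique source of the exponent $2-\frac{1}{2^*}$, and the fix is to swap the $L^{2^*}$ Sobolev step for the $L^\infty$ stability \eqref{Linfty.est}, trading $|\O_{1,\mesh}|^{\frac12-\frac{1}{2^*}}$ for $|\O_{1,\mesh}|^{\frac12}$. Your exponent count $h^{1/2}\cdot h^{3/2}=h^2$ is also the right one.

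However, your identification of the objects is wrong in a way that the verification you yourself propose would expose. The term in question is $\int_{\O_{1,\mesh}}(\Proj\bu-\hat{u})\,\Pi_\disc w_\disc\d\x$ with $w_\disc=p_\disc^*(\bu)-p_\disc^*(\hat{u})$. The gradient-scheme solution to which \eqref{Linfty.est} must be applied is this $w_\disc$, and by \eqref{subtract_aux2} its source is the \emph{global} quantity $\Pi_\disc y_\disc(\bu)-\Pi_\disc y_\disc(\hat{u})$: it is not supported in $\O_{1,\mesh}$, does not scale like $h\norm{\bu}{W^{1,\infty}(\mesh_1)}$, and its $L^2$ norm is precisely the quantity being bootstrapped in \eqref{new.A2}. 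The localised factor of pointwise size $\lesssim h\norm{\bu}{W^{1,\infty}(\mesh_1)}$ is the \emph{other} factor, $\Proj\bu-\hat{u}$, which is not a gradient-scheme solution at all. If one takes your description literally and introduces by transposition a solution $\zeta_\disc$ of \eqref{base.GS} with (recentred) source $\mathbbm{1}_{\O_{1,\mesh}}(\Proj\bu-\hat{u})$, the resulting pairing $(\Pi_\disc y_\disc(\bu)-\Pi_\disc y_\disc(\hat{u}),\Pi_\disc\zeta_\disc)$ runs over all of $\O$, so the extra factor $|\O_{1,\mesh}|^{1/2}$ you claim from an $L^2(\O_{1,\mesh})$ norm of $\Pi_\disc\zeta_\disc$ is not available and you only reach $\mathcal O(h^{3/2})$. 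The correct version of your one-line fix is $\norm{\Pi_\disc w_\disc}{L^2(\O_{1,\mesh})}\le|\O_{1,\mesh}|^{1/2}\norm{\Pi_\disc w_\disc}{L^\infty(\O)}\le h^{1/2}\,\delta\,\norm{\Pi_\disc(y_\disc(\bu)-y_\disc(\hat{u}))}{}$, paired with $\norm{\Proj\bu-\hat{u}}{L^2(\O_{1,\mesh})}\lesssim h^{3/2}\norm{\bu}{W^{1,\infty}(\mesh_1)}$; this retains the factor needed to close the bootstrap in \eqref{new.A2} and is equivalent to the paper's $L^\infty\times L^\infty\times|\O_{1,\mesh}|$ H\"older split in \eqref{improved.A2}. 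With that correction the rest of your plan matches the paper's proof.
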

\begin{remark}
For most methods, assumption \eqref{Linfty.est} is satisfied if the mesh is quasi-uniform (see  \cite{LGRN_NCFEM} for conforming and non-conforming $\mathbb{P}_1$
finite element method, and \cite[Theorem 7.1]{GDM_control} for HMM methods).
\end{remark}

\begin{corollary}[Super-convergence for the state and adjoint variables] \label{cor.super-convergence}
	Under the assumptions of Theorem \ref{thm.super-convergence}, the following error estimates hold, with $C$ depending only on $\alpha$:
	\begin{align}
	\label{eq_supercv.y}
	\norm{\by_\mesh-\Pi_\disc \by_\disc}{}\lesssim_{\eta}{}&
	Ch^{r}\norm{\bu}{W^{1,\infty}(\mesh_1)}+Ch^2 \mathcal F(\yd,f,\by,\bp),\\
	\label{eq_supercv.p}
	\norm{\bp_\mesh-\Pi_\disc \bp_\disc}{}\lesssim_{\eta}{}&
	C h^{r}\norm{\bu}{W^{1,\infty}(\mesh_1)}
	+Ch^2\mathcal F(\yd,f,\by,\bp),
	\end{align}
	where $\by_\mesh$ and $\bp_\mesh$ are defined as in \assum{1}, and
	$r=2-\frac{1}{2^*}$.
	
	Under the assumptions of Theorem \ref{thm.fullsuper-convergence}, \eqref{eq_supercv.y} and \eqref{eq_supercv.p}
	hold with $r=2$ and $C$ depending only $\alpha$ and $\delta$.
\end{corollary}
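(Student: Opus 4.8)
The plan is to establish the two bounds in turn, deducing the state estimate \eqref{eq_supercv.y} from the control super-convergence of Theorems \ref{thm.super-convergence} and \ref{thm.fullsuper-convergence}, and then the adjoint estimate \eqref{eq_supercv.p} from \eqref{eq_supercv.y}. The common device is to compare $\Pi_\disc\by_\disc$ (resp. $\Pi_\disc\bp_\disc$) not with $\by_\mesh$ directly, but with an auxiliary discrete solution driven by the \emph{exact} continuous source. For the state, let $\hat y_\disc\in X_{\disc,\star}$ solve the gradient scheme \eqref{base.GS} with right-hand side $\bu+f$ and $\dashint_\O\Pi_\disc\hat y_\disc\d\x=\dashint_\O\by\d\x$. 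Since $\bu=P_{[a,b]}(-\tfrac1\alpha\bp)$ by the projection relation of Lemma \ref{u_proj_c} and $\bp\in H^2(\O)$, the Lipschitz continuity of $P_{[a,b]}$ gives $\bu\in H^1(\O)$ with $\norm{\bu}{H^1(\O)}\lesssim\norm{\bp}{H^2(\O)}$, so $\by$ solves $-\div(A\nabla\by)=\bu+f\in H^1(\O)$ and \assum{1}(i) applies: $\norm{\by_\mesh-\Pi_\disc\hat y_\disc}{}\lesssim h^2\mathcal F(\yd,f,\by,\bp)$. The whole difficulty is therefore concentrated in the remaining term $\Pi_\disc(\hat y_\disc-\by_\disc)$.

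Set $e_\disc=\hat y_\disc-\by_\disc\in X_{\disc,\star}$. Subtracting \eqref{discrete_state} from the auxiliary scheme gives $a_\disc(e_\disc,w_\disc)=(\bu-\bu_h,\Pi_\disc w_\disc)$ for all $w_\disc\in X_{\disc,\star}$, and testing with $w_\disc=e_\disc$ (which has zero average, so $\norm{e_\disc}{\disc}=\norm{\nabla_\disc e_\disc}{}$) reduces matters to bounding $(\bu-\bu_h,\Pi_\disc e_\disc)$ by a super-convergent quantity times $\norm{\nabla_\disc e_\disc}{}$. I split $\bu-\bu_h=(\bu-\Proj\bu)+(\Proj\bu-\bu_h)$. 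The first part is $L^2$-orthogonal to piecewise constants, so $(\bu-\Proj\bu,\Pi_\disc e_\disc)=(\bu-\Proj\bu,\Pi_\disc e_\disc-\Proj\Pi_\disc e_\disc)$, which by \assum{2} and $E_h(\bu)\lesssim h\norm{\bu}{H^1(\O)}$ is $\lesssim h^2\norm{\bp}{H^2(\O)}\norm{\nabla_\disc e_\disc}{}$. For the second part, $\Proj\bu-\bu_h$ is piecewise constant, hence pairs only with $\Proj\Pi_\disc e_\disc$, and the projection identity $\bu_h=\Proj\tu_h$ from Lemma \ref{u_proj_c} yields $\Proj\bu-\bu_h=\Proj(\bu-\tu)+\Proj(\tu-\tu_h)$. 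Here $\norm{\Proj(\tu-\tu_h)}{}\le\norm{\tu-\tu_h}{}$ is controlled by Theorem \ref{thm.super-convergence} (resp. \ref{thm.fullsuper-convergence}), while $\norm{\Proj(\bu-\tu)}{}$ is estimated as in the proof of that theorem: on the cells of $\mesh_2$, where $\bu$ stays in a single regime, the clipping is consistent and $\bu-\tu$ reduces to $-\tfrac1\alpha(\bp-\bp_\mesh)$, to which \eqref{prop.M.2} applies; on $\mesh_1$ one uses $|\O_{1,\mesh}|\lesssim h$, $\bu\in W^{1,\infty}(\mesh_1)$ and \assum{3} (resp. \eqref{Linfty.est}) to produce the $h^{r}\norm{\bu}{W^{1,\infty}(\mesh_1)}$ contribution with $r=2-\tfrac1{2^*}$ (resp. $r=2$). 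Collecting these bounds yields the super-convergence of $\norm{\nabla_\disc e_\disc}{}$, hence of $\norm{\Pi_\disc e_\disc}{}\lesssim\norm{\nabla_\disc e_\disc}{}$, and \eqref{eq_supercv.y} follows.

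For the adjoint, define $\hat p_\disc$ as the solution of the adjoint gradient scheme with $\Pi_\disc\by_\disc$ replaced by the exact state $\by$, i.e. with source $\by-\yd\in H^1(\O)$ and $\dashint_\O\Pi_\disc\hat p_\disc\d\x=\dashint_\O\bp\d\x$; \assum{1}(i) gives $\norm{\bp_\mesh-\Pi_\disc\hat p_\disc}{}\lesssim h^2\norm{\by-\yd}{H^1(\O)}\lesssim h^2\mathcal F(\yd,f,\by,\bp)$. Writing $\epsilon_\disc=\hat p_\disc-\bp_\disc$, subtracting \eqref{discrete_adjoint} gives $a_\disc(z_\disc,\epsilon_\disc)=(\by-\Pi_\disc\by_\disc,\Pi_\disc z_\disc)$; testing with $z_\disc=\epsilon_\disc$ (zero average, so $\norm{\epsilon_\disc}{\disc}=\norm{\nabla_\disc\epsilon_\disc}{}$) reduces matters to bounding $(\by-\Pi_\disc\by_\disc,\Pi_\disc\epsilon_\disc)$. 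I split $\by-\Pi_\disc\by_\disc=(\by-\by_\mesh)+(\by_\mesh-\Pi_\disc\by_\disc)$: the first pairing is super-convergent by \assum{1}(ii), i.e. $\lesssim h^2\norm{\by}{H^2(\O)}\norm{\Pi_\disc\epsilon_\disc}{}$ via \eqref{prop.M.1}, and the second is bounded by $\norm{\by_\mesh-\Pi_\disc\by_\disc}{}\norm{\Pi_\disc\epsilon_\disc}{}$, using the state estimate \eqref{eq_supercv.y} just proved. Dividing by $\norm{\nabla_\disc\epsilon_\disc}{}$ and combining with the $\hat p_\disc$ bound gives \eqref{eq_supercv.p}.

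The delicate point is entirely in the state estimate, namely in showing that the control mismatch $(\bu-\bu_h,\Pi_\disc e_\disc)$ — a priori only of size $\norm{\bu-\bu_h}{}\norm{\Pi_\disc e_\disc}{}=\mathcal O(h)\norm{\nabla_\disc e_\disc}{}$ by Theorem \ref{theorem.control} — actually contributes at the super-convergent order. This forces one to avoid bounding $\norm{\bu-\bu_h}{}$ or $\norm{\bu-\tu}{}$ directly (both being only $\mathcal O(h)$ for piecewise-constant reconstructions) and instead to exploit three structural facts simultaneously: the $L^2$-orthogonality of $\bu-\Proj\bu$ to piecewise constants combined with the consistency \assum{2}; the projection identity $\bu_h=\Proj\tu_h$ of Lemma \ref{u_proj_c}; and the cancellation encoded in \eqref{prop.M.2} together with the active-set splitting $\mesh=\mesh_1\cup\mesh_2$ under \assum{4}. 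Once these are in place the adjoint part is comparatively routine, since \eqref{prop.M.1} is tailored precisely to absorb the reconstruction defect $\by-\by_\mesh$.
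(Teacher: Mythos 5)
Your overall architecture (auxiliary discrete solutions driven by the exact sources, \assum{1}-i) for the consistency part, error equations tested with the zero-average error itself) is reasonable, and your adjoint step is essentially the correct one. The state step, however, contains a genuine gap at its central point. You invoke the identity $\bu_h=\Proj\tu_h$ ``from Lemma \ref{u_proj_c}'', but \eqref{proj_GDM} and \eqref{Projection_ppu} give $\bu_h=P_{[a,b]}\big(\Proj(-\alpha^{-1}\Pi_\disc\bp_\disc)\big)$ while $\Proj\tu_h=\Proj\big(P_{[a,b]}(-\alpha^{-1}\Pi_\disc\bp_\disc)\big)$: the cut-off and the cell-averaging do not commute on cells where $-\alpha^{-1}\Pi_\disc\bp_\disc$ crosses $a$ or $b$. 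The commutator is controlled only by $\alpha^{-1}\norm{\Pi_\disc\bp_\disc-\Proj(\Pi_\disc\bp_\disc)}{}\lesssim \alpha^{-1}h\norm{\nabla_\disc\bp_\disc}{}$ (exactly as in \eqref{est:uh:ppuh}), i.e.\ $O(h)$, and \assum{4} says nothing about the measure of the cells where the \emph{discrete} adjoint crosses the bounds, so this term cannot be pushed to order $h^2$. (The identity does hold when $\Pi_\disc\bp_\disc$ is already piecewise constant, e.g.\ for HMM, but fails for conforming and non-conforming $\mathbb{P}_1$ elements.) Two further points in the same step are stated too loosely to survive: first, $\bu=P_{[a,b]}(-\alpha^{-1}\bp+\oc)$ with $\oc$ only known to be $O(h)$ (see \eqref{Gamma.0}--\eqref{est:u:ppu}), so an $L^2$ bound on $\norm{\Proj(\bu-\tu)}{}$ is stuck at $O(h)$; you would have to keep the duality pairing with the zero-average $\Pi_\disc e_\disc$ so that the constant $\oc$ integrates out. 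Second, on $\O_{1,\mesh}$ a plain Cauchy--Schwarz yields only $h\,|\O_{1,\mesh}|^{1/2}=h^{3/2}$; the rate $h^{2-1/2^*}$ requires a H\"older estimate against $\norm{\Pi_\disc e_\disc}{L^{2^*}(\O)}$ \emph{inside} the pairing (as in \eqref{a21.ineq}), whereas you present \assum{3} as a tool for estimating the norm of $\Proj(\bu-\tu)$, which it is not.

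The paper itself only cites \cite[Corollary 3.7]{GDM_control}; the argument behind that citation avoids your problematic comparison altogether. It chains $\by_\mesh\to\Pi_\disc y_\disc(\bu)\to\Pi_\disc y_\disc(\hat{u})\to\Pi_\disc \by_\disc$, where $\hat{u}(\x)=\bu(\overline{\x}_K)$ is the centroid interpolant: the first link is \assum{1}-i), the second is exactly the quantity bounded in Step~2 of the proof of Theorem \ref{thm.super-convergence} (estimates \eqref{new.A2}--\eqref{a22.ineq}), and the third is $\lesssim\norm{\hat{u}-\bu_h}{}$ by \eqref{a3.ineq}, which Step~3 bounds super-convergently via the \emph{pointwise} optimality conditions rather than via any commutation of $P_{[a,b]}$ with $\Proj$. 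To repair your proof you would effectively have to replace $\Proj\bu$ and $\Proj\tu_h$ by $\hat{u}$ and reproduce that pointwise argument, at which point you recover the paper's route. Your adjoint step (auxiliary $\hat p_\disc$ with source $\by-\yd$, splitting $\by-\Pi_\disc\by_\disc$ via \eqref{prop.M.1} and the state estimate) is correct and matches what the cited corollary does.
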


 \subsection{Discussion on post-processed controls}\label{dis:ppu}
 In this section, we present a detailed analysis of post-processed controls given by \eqref{Projection_ppu}.
This analysis is performed under the assumptions of Section \ref{supercv}, and by also assuming
that $\WS_\disc(\varphi)\lesssim h$ for all $\varphi\in H^2(\O)$ (see Remark \ref{rate.ellptic}). We begin by stating and proving two lemmas which discuss projection relations between control and adjoint variables for the pure Neumann problem, both at the continuous level and at the discrete level. We then show that the post-processed controls remain $O(h)$ close to their corresponding original controls, see \eqref{est:uh:ppuh} and \eqref{est:u:ppu}. Hence, the super-convergence result makes sense: since $\bu_h$ is piecewise constant, it is impossible to expect more than $O(h)$ approximation on the controls; but by \enquote{moving} these controls by a specific $O(h)$, we obtain computable post-processed controls that enjoy an $O(h^2)$ convergence result.
 
 \begin{lemma} \label{cont_proj_general} 
 	Let $-\infty\le a<0<b\le \infty$ and $\phi \in L^1(\O)$.
 	Define $\Gamma:\R \rightarrow \R$ by 
\[
\Gamma(c)=\int_{\O}P_{[a,b]}(\phi+c)\d\x,
\]
where $P_{[a,b]}$ is given by \eqref{def_proj[a,b]}. 
Set $m=a-\mbox{\rm ess sup}(\phi)\in [-\infty,+\infty)$ and $M=b-\mbox{\rm ess inf}(\phi)\in (-\infty,+\infty]$.
Then we have the following.
 	\begin{enumerate}
 		\item\label{it1} $\Gamma$ is Lipschitz continuous.
 		\item\label{it2} $\lim\limits_{c \to m}\Gamma(c)=a|\O|$, $\lim\limits_{c \to M}\Gamma(c)=b|\O|$, and there
is $c^\star\in (m,M)$ such that $\Gamma(c^\star)=0$.
 		\item\label{it3} If $\phi\in H^1(\O)$, then for any compact interval $Q$ in $(m,M)$, there exists $\rho_Q >0$ such that if $c, c' \in Q$ with $c < c'$, then 
\begin{equation}\label{gamma.incr}
\Gamma(c')-\Gamma(c) \ge \rho_Q(c'-c).
\end{equation}
 		As a consequence, the real number $c^\star$ in Item \ref{it2} is unique.
 	\end{enumerate}
 \end{lemma}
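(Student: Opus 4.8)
The plan is to treat the three items in order, exploiting throughout that the cut-off $P_{[a,b]}$ from \eqref{def_proj[a,b]} is nondecreasing and $1$-Lipschitz on $\R$. For Item~\ref{it1}, since $|P_{[a,b]}(s)-P_{[a,b]}(s')|\le|s-s'|$, I would write $|\Gamma(c')-\Gamma(c)|\le\int_\O|P_{[a,b]}(\phi+c')-P_{[a,b]}(\phi+c)|\d\x\le|\O|\,|c'-c|$, which gives Lipschitz continuity with constant $|\O|$; the same monotonicity of $P_{[a,b]}$ shows that $\Gamma$ is nondecreasing, a fact I will reuse below.

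For Item~\ref{it2}, I would first observe that when $m>-\infty$ and $c\le m$ one has $\phi+c\le \mathrm{ess\,sup}\,\phi+m=a$ a.e., so $P_{[a,b]}(\phi+c)=a$ a.e.\ and $\Gamma(c)=a|\O|$ exactly; when $m=-\infty$ the same limit $\Gamma(c)\to a|\O|$ follows from dominated convergence as $c\to-\infty$, using that $P_{[a,b]}(\phi+c)\to a$ pointwise and is dominated (by a constant if $b<\infty$, by $|a|+|\phi|+|c_0|$ along a bounded-above sequence if $b=\infty$). The symmetric argument at $M$ gives $\Gamma(c)\to b|\O|$. Since $a<0<b$, we have $a|\O|<0<b|\O|$, so continuity of $\Gamma$ and the intermediate value theorem produce a zero $c^\star$; the monotonicity of $\Gamma$ together with these boundary values then forces $c^\star\in(m,M)$.

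The core of the lemma is Item~\ref{it3}, and here I would begin with an exact representation of the increments of $\Gamma$. For fixed $\x$, the map $t\mapsto P_{[a,b]}(\phi(\x)+t)$ is $1$-Lipschitz with a.e.\ derivative $\carac_{(a,b)}(\phi(\x)+t)$, so the fundamental theorem of calculus and Tonelli's theorem yield
\be\label{eq:mu-rep}
\Gamma(c')-\Gamma(c)=\int_\O\!\!\int_c^{c'}\carac_{(a,b)}(\phi(\x)+t)\,\d t\,\d\x=\int_c^{c'}\mu(t)\,\d t,\qquad \mu(t):=\big|\{\x\in\O:a-t<\phi<b-t\}\big|.
\ee
The decisive step is to show $\mu(t)>0$ for every $t\in(m,M)$, and this is exactly where the $H^1$ regularity (and connectedness of $\O$) enters: for a general $L^1$ function whose essential range has a gap, $\mu$ can vanish on an interval. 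I would argue by contradiction: if $\mu(t)=0$ then $|\{a-t<\phi<b-t\}|=0$, so by Stampacchia's truncation theorem the function $P_{[a-t,\,b-t]}(\phi)\in H^1(\O)$ has gradient $\carac_{\{a-t<\phi<b-t\}}\nabla\phi=0$ a.e.; being $H^1$ on the connected set $\O$ it is a.e.\ constant, hence equal a.e.\ to either $a-t$ (forcing $\phi\le a-t$ a.e., i.e.\ $t\le m$) or $b-t$ (forcing $\phi\ge b-t$ a.e., i.e.\ $t\ge M$), contradicting $t\in(m,M)$.

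Finally, I would note that each $t\mapsto\carac_{(a,b)}(\phi(\x)+t)$ is the indicator of the open set $\{t:a-\phi(\x)<t<b-\phi(\x)\}$, so Fatou's lemma shows $\mu$ is lower semicontinuous; being positive on the compact set $Q\subset(m,M)$, it attains there a positive minimum $\rho_Q:=\min_{t\in Q}\mu(t)>0$. Then \eqref{eq:mu-rep} gives $\Gamma(c')-\Gamma(c)=\int_c^{c'}\mu\ge\rho_Q(c'-c)$ for $c<c'$ in $Q$, which is \eqref{gamma.incr}, and uniqueness of $c^\star$ follows at once: two zeros $c_1<c_2$ in $(m,M)$ would lie in a compact subinterval $Q$, whence $0=\Gamma(c_2)-\Gamma(c_1)\ge\rho_Q(c_2-c_1)>0$. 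The main obstacle is the strict positivity $\mu>0$ on $(m,M)$: the representation \eqref{eq:mu-rep} and the lower-semicontinuity step are routine, but ruling out a vanishing $\mu$ genuinely requires the Sobolev regularity of $\phi$ together with the connectedness of $\O$.
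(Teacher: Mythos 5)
Your proof is correct and follows essentially the same route as the paper's: Lipschitz continuity of $P_{[a,b]}$ for Item 1, the limits at $m$ and $M$ plus the intermediate value theorem for Item 2, and for Item 3 the identification of the increment of $\Gamma$ with the integral of $\mu(t)=\big|\{\x\in\O: a-t<\phi(\x)<b-t\}\big|$, whose strict positivity on $(m,M)$ is obtained by the same truncation/connectedness contradiction and whose lower semicontinuity follows from Fatou's lemma. The only cosmetic differences are that you use direct computation and dominated convergence rather than Fatou's lemma for the limits in Item 2, and that you obtain the increment formula directly via the fundamental theorem of calculus and Tonelli instead of first computing $\Gamma'$ a.e.\ as the paper does.
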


 \begin{proof}
 	Item \ref{it1} is obvious since $P_{[a,b]}$ is Lipschitz continuous.
 	
 	\medskip
 	
 	Let us now analyse the limits in Item \ref{it2}. Let $(c_n)$ be a sequence in $\R$ such that $c_n \rightarrow M$ as $n \rightarrow \infty$. By definition of $M$, this implies $P_{[a,b]}(\phi+c_n) \rightarrow b$ a.e on $\O$. Let $(c_n)$ be bounded below by $R$ and note that $\phi +R \in L^1(\O)$. Moreover, for $s \in \R$, $a<0<b$ implies $P_{[a,b]}(s)\ge \min(s,0)$ so $P_{[a,b]}(\phi+c_n)\ge \min(\phi+c_n,0)\ge \min(\phi+R,0)\in L^1(\O)$. By Fatou's Lemma, 
$$\int_{\O}^{}b\d\x \le \liminf_{n \rightarrow \infty} \int_{\O}^{}P_{[a,b]}(\phi(\x)+c_n)\d\x$$
 	which gives $b|\O| \le \liminf_{n \rightarrow \infty}\Gamma(c_n)$. Since $\Gamma(c_n) \le b|\O|$ (because
$P_{[a,b]}(s)\le b$), we infer
 	that $\lim_{n\to\infty}\Gamma(c_n)=b|\O|$, and thus that $\lim_{c\to M}\Gamma(c)=b|\O|$.
 	In a similar way, we deduce that $\lim_{c \to m}\Gamma(c)=a|\O|$.
 	
 	The existence of $c^\star$ such that $\Gamma(c^\star)=0$ then follows from the intermediate value theorem
 	and $\lim_{c \to m}\Gamma(c)=a|\O|<0< b|\O|=\lim_{c \to M}\Gamma(c)$.
 	
 	\medskip
 	
 	We now assume that $\phi\in H^1(\O)$ and we turn to Item \ref{it3}. For a.e $c \in \R$, $\Gamma^{'}(c)=\int_{\O}\mathbbm{1}_{(a,b)}(\phi(\x)+c)\d\x,$  where $\mathbbm{1}_{(a,b)}$ is the characteristic function of $(a,b)$.
 	Define $\Theta(c)=\int_{\O}\mathbbm{1}_{(a,b)}(\phi(\x)+c)\d\x,$ for all $c \in \R$. We claim that
 	\begin{itemize}
 		\item  $\Theta$ is lower semi-continuous,
 		\item $\forall c \in (m,M)$, $\Theta(c) >0.$
 	\end{itemize}
 	To prove that $\Theta$ is lower semi-continuous, let $c_n \rightarrow c$ as $n \rightarrow \infty$. 
Since $\mathbbm{1}_{(a,b)}$ is lower semi-continuous on $\R$, we have, for all $\x\in\O$,
 	$$\mathbbm{1}_{(a,b)}(\phi(\x)+c)\le \liminf_{n \rightarrow \infty}\mathbbm{1}_{(a,b)}(\phi(\x)+c_n).$$
 	Applying Fatou's Lemma,
 	\[
 	\Theta(c) \le \liminf_{n \rightarrow \infty}\int_{\O}^{}\mathbbm{1}_{(a,b)}(\phi(\x)+c_n)\d\x= \liminf_{n \rightarrow \infty} \Theta(c_n).
 	\]
 	Hence, $\Theta$ is lower semi-continuous.
 	We now show that  $\Theta >0$ on $(m,M).$ Let $c \in (m,M)$. Then $I=(a-c,b-c)\cap(\mbox{ess inf }\phi,\mbox{ess sup }\phi)$ is an interval of positive length, since $a-c < \mbox{ess sup }\phi$ and $b-c >\mbox{ess inf }\phi$. The set $W_{I,c}=\{\x:\phi(\x) \in I\}$ has a non-zero measure because $\phi \in H^1(\O)$ and $\O$ is connected. To see this,
 	let $\alpha<\beta$ be the endpoints of $I$ and assume that $\phi \in H^1(\O)$ takes some values less than $\alpha$ on a non-null set, some values greater than $\beta$ on a non-null set, but that $W_{I,c}$ is a null set. Then $P_{[\alpha,\beta]}(\phi) \in H^1(\O)$ exactly takes the values $\alpha$ and $\beta$ (outside a set of zero measure). Hence $\nabla P_{[\alpha,\beta]}(\phi)=\mathbbm{1}_{[\alpha,\beta]}(\phi)\nabla \phi=0$ and
 	$P_{[\alpha,\beta]}(\phi)$ should be constant, since $\O$ is connected, which is a contradiction. Thus, $W_{I,c}$ has a non-zero measure. Since $\{\x:\phi(\x)+c \in (a,b)\} \supseteq W_{I,c}$, this gives $\Theta(c) \ge |W_{I,c}|>0$.
 	
 	Coming back to Item \ref{it3}, let $Q$ be a compact interval in $(m,M)$. We know that $\Theta>0$ on $Q$ and $\Theta$ is lower semi-continuous. Hence $\Theta$ reaches its minimum on $Q$ and $\inf_{Q} \Theta=\Theta(c_0)>0$ for some $c_0 \in Q$. Since $\Gamma^{'}=\Theta$ a.e, $\Gamma^{'}\ge \inf_{Q}\Theta$ a.e on $Q$ and, $\Gamma$ being Lipschitz and $[c,c']\subset Q$, we infer
 	$$\Gamma(c')-\Gamma(c) =\int_{c}^{c'}\Gamma^{'}(s)\d s \ge \left[\inf_{Q}\Theta\right](c'-c),$$
 	which establishes \eqref{gamma.incr}. The uniqueness of $c^\star$ such that $\Gamma(c^\star)=0$ follows from 
 	this inequality, which shows that $\Gamma$ is strictly increasing on $(m,M)$.
 \end{proof}
 
 \begin{lemma}[Projection formulas for the controls]\label{u_proj_c}
 	If $\bp \in H^1(\O)$ is a co-state and $\oc \in \R$ is such that $\dashint_{\O}^{}P_{[a,b]}(-\frac{1}{\alpha}\bp(\x)+\oc)\d\x=0$, then the continuous optimal control $\bu$ in \eqref{continuous_kkt} can be expressed in terms of the projection formula 	
 	\be \label{proj_cont}
 		\bu(\x)=P_{[a,b]}\left(-\frac{1}{\alpha}\bp(\x)+\oc\right).
 		\ee
 	If $\disc$ is a GD and $\bp_\disc$ is chosen such that 
 	\begin{equation}\label{proper.pD}
	\dashint_{\O}P_{[a,b]}\left(\Proj\left(-\frac{1}{\alpha}\Pi_\disc \bp_\disc \right) \right)\d\x = 0,
	\end{equation}
	then the discrete optimal control in \eqref{discrete_kkt} is given by
 	\be\label{proj_GDM}
 	\bu_{h}(\x)=P_{[a,b]}\left(\Proj\left(-\frac{1}{\alpha}\Pi_\disc \bp_\disc(\x) \right) \right).
 	\ee
 \end{lemma}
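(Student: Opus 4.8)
The plan is to recognise both optimality inequalities \eqref{opt_cont} and \eqref{opt_discrete} as the variational characterisation of an $L^2$-orthogonal projection onto a closed convex set, and then to verify that the claimed formulas produce exactly this projection. For the continuous case, I set $\phi=-\frac1\alpha\bp\in H^1(\O)$ and rewrite \eqref{opt_cont} as $(\bu-\phi,v-\bu)\ge 0$ for all $v\in\Uad$. This says precisely that $\bu$ is the $L^2$-projection of $\phi$ onto the closed convex set $\Uad$ of \eqref{comp.cond}, which is unique. It therefore suffices to show that $\tu:=P_{[a,b]}(\phi+\oc)$ satisfies the same characterisation. First, $\tu\in\Uad$: the box constraint $a\le\tu\le b$ is automatic from \eqref{def_proj[a,b]}, and $\dashint_\O\tu\d\x=0$ is exactly the hypothesis on $\oc$ (whose existence and uniqueness are, incidentally, guaranteed by Lemma \ref{cont_proj_general}).

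Second, writing $s=\phi+\oc$ so that $\tu-\phi=\bigl(P_{[a,b]}(s)-s\bigr)+\oc$, I would split
\[
\int_\O(\tu-\phi)(v-\tu)\d\x=\int_\O\bigl(P_{[a,b]}(s)-s\bigr)(v-\tu)\d\x+\oc\int_\O(v-\tu)\d\x.
\]
The second term vanishes because $v$ and $\tu$ both have zero average. For the first term I argue pointwise on the three regions $\{s<a\}$, $\{a\le s\le b\}$, $\{s>b\}$: where $\tu=a$ one has $P_{[a,b]}(s)-s>0$ and $v-\tu=v-a\ge 0$; where $a<\tu<b$ the factor $P_{[a,b]}(s)-s$ vanishes; and where $\tu=b$ one has $P_{[a,b]}(s)-s<0$ and $v-\tu=v-b\le 0$. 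Hence the integrand is nonnegative a.e.\ and the integral is $\ge 0$. This gives $\tu=\bu$ by uniqueness of the projection, proving \eqref{proj_cont}.

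The discrete case follows the same blueprint. Since every test function $v_h-\bu_h$ is piecewise constant on $\mesh$, the $L^2$-orthogonality of $\Proj$ lets me replace $\Pi_\disc\bp_\disc$ by $\Proj(\Pi_\disc\bp_\disc)$ in \eqref{opt_discrete}, turning it into $(\bu_h-\psi,v_h-\bu_h)\ge 0$ for all $v_h\in\Uadh$, where $\psi:=\Proj(-\frac1\alpha\Pi_\disc\bp_\disc)$ is piecewise constant. Thus $\bu_h$ is the $L^2$-projection of $\psi$ onto $\Uadh$. I then set $\tu_h:=P_{[a,b]}(\psi)$, which is piecewise constant, satisfies the box bounds, and has zero average precisely by the normalisation \eqref{proper.pD}; hence $\tu_h\in\Uadh$. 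The same pointwise case analysis, now with no additive constant (the role of $\oc$ being absorbed into the choice of $\bp_\disc$), shows $\int_\O(\tu_h-\psi)(v_h-\tu_h)\d\x\ge 0$, and uniqueness of the projection yields $\tu_h=\bu_h$, i.e.\ \eqref{proj_GDM}.

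The only genuinely non-standard ingredient, and the point I expect to need the most care, is the zero-average constraint: it is what forces the additive constant $\oc$ in the continuous projection formula, and the whole argument hinges on the cancellation $\oc\int_\O(v-\tu)\d\x=0$, valid only because both $v$ and $\tu$ lie in the zero-average set $\Uad$. In the discrete setting the analogous care is needed to justify replacing $\Pi_\disc\bp_\disc$ by its mesh-projection $\Proj(\Pi_\disc\bp_\disc)$ \emph{before} applying the cut-off $P_{[a,b]}$, which is legitimate only when tested against the piecewise-constant increments $v_h-\bu_h$.
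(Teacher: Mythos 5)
Your proposal is correct and follows essentially the same route as the paper's proof: both identify the optimality inequality as the variational characterisation of the $L^2$-projection onto $\Uad$ (resp.\ $\Uadh$), verify the candidate $P_{[a,b]}(-\alpha^{-1}\bp+\oc)$ lies in the admissible set, and conclude by a pointwise sign analysis on the three regions together with uniqueness of the projection; your splitting-off of $\oc\int_\O(v-\tu)\d\x=0$ is just a rewriting of the paper's substitution $\tp=\bp-\alpha\oc$. The discrete case is likewise handled identically, via the orthogonality of $\Proj$ against piecewise-constant increments.
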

 \begin{proof}
 	Set $\tp=\bp-\alpha \oc$. Clearly, $\tp \in H^1(\O)$.
 	From the optimality condition for the control problem \eqref{opt_cont}, we deduce that
 	$$ (\tp+\alpha\bu,v-\bu)\geq 0 \: \; \forall v\in  \Uad,$$
 	since $\int_{\O}^{}\bu \d\x=\int_{\O}^{}v \d\x=0$. Set $U=P_{[a,b]}(-\alpha^{-1}\tp)$ i.e,
 	\[U = \begin{cases} 
 	a &  \mbox{ on }\O_+=\{\x \in \O\,:\,\tp(\x)+\alpha U(\x)>0\} \\
 	-\alpha^{-1}\tp&\mbox{ on }\O_{0}=\{\x\in\O\,:\,\tp(\x)+\alpha U(\x)=0\}\\
 	b &  \mbox{ on }\O_-=\{\x \in\O\,:\,\tp(\x)+\alpha U(\x)<0\}.
 	\end{cases}
 	\]
 	It is then straightforward to see that $U \in \Uad$, i.e, $U \in [a,b]$ and $\dashint_{\O}U(\x)d\x=0$ (by choice of $\oc$). 
 	Then, using the definitions of $\O_+$, $\O_0$ and $\O_-$, since $v\ge a=U$ on $\O_+$ and $v\le b=U$ on $\O_-$,
 	\begin{align*}
 		(\tp+\alpha U,v-U) &=\int_{\O_{+}}(\tp+\alpha U)(v-U)\d\x+\int_{\O_{0}}(\tp+\alpha U)(v-U)\d\x\\
 		&\qquad +\int_{\O_{-}}(\tp+\alpha U)(v-U)\d\x\ge 0.
 	\end{align*}
 	Recall that the optimality condition is nothing but a characterisation of the $L^2(\O)$ orthogonal projection of $-\alpha^{-1}\tp$ on $\Uad$ and, as such, defines a unique element $\bu$ of $\Uad$.
 	We just proved that $U=P_{[a,b]}(-\alpha^{-1}\tp)$ satisfies this optimality condition, which shows that it
 	is equal to $\bu$. The proof of \eqref{proj_cont} is complete.
 	
 	The second relation follows in a similar way by noticing that, since controls are piecewise-constants on $\mesh$, \eqref{opt_discrete} is equivalent to
 	$ (\Proj(\Pi_\disc \bp_\disc) +\alpha\bu_h,v_h-\bu_h)\geq 0$, for all $v_h \in  \Uadh$. We also notice that, by definition of $\Proj$ and the assumption \eqref{proper.pD}, $P_{[a,b]}\left(\Proj\left(-\frac{1}{\alpha}\Pi_\disc \bp_\disc \right)\right) \in \Uadh.$
 \end{proof}

  \begin{remark} \label{remark:proj.relation}
There is at least one adjoint $\bp_\disc$ such that \eqref{proper.pD} is satisfied: start from any adjoint $\bp^0_\disc$ and, by applying Lemma \ref{cont_proj_general}
(Item \ref{it2}) to $\phi=\Proj(-\alpha^{-1}\Pi_\disc\bp_\disc^0)$ and by noticing that $\phi+c^\star=\Proj(-\alpha^{-1}\Pi_\disc\bp_\disc^0)+c^\star$,
find $c^\star$ such that $\bp_\disc=\bp_\disc^0-\alpha c^\star1_\disc$ satisfies \eqref{proper.pD}.

Since the discrete co-state $\bp_\disc$ is a computable quantity, its average is easier to
fix than the average of the non-computable $\bp$. Hence, the projection relation \eqref{proj_GDM} is the most natural choice to express the discrete control $\bu_h$ in terms of the discrete adjoint variable. This is the choice made in the modified active set strategy presented in Subsection \ref{activeset}. Once this choice is made,  since $\bp$ must have the same average as $\Pi_\disc \bp_\disc$ for $\tu$ defined in \eqref{Projection_ppu} to satisfy super-convergence
estimates, it is clear that $P_{[a,b]}(-\frac{1}{\alpha}\bp)$ will not have a zero average in general. Hence, if we want to express the continuous control in terms of $\bp$, we need to offset this $\bp$ by the correct $\overline{c}$, as stated in the lemma. 
  \end{remark}

\begin{lemma}[Stability of the discrete states]\label{discrete.bdd}
Let $\disc$ be a GD, $(\by, \bp, \bu)$ be a solution to \eqref{continuous_kkt} and
$(\by_\disc, \bp_\disc, \bu_h)$ be a solution to \eqref{discrete_kkt}. Assume that $\dashint_{\O}^{}\Pi_\disc \bp_\disc \d\x =\dashint_{\O}^{} \bp\d\x$. Then 
\begin{align}
\norm{\Pi_\disc \by_\disc}{} +\norm{\nabla_\disc \by_\disc}{} + \norm{\Pi_\disc \bp_\disc}{}+\norm{\nabla_\disc \bp_\disc}{}& \lesssim 1. \label{discrete.state.adj.bdd}
\end{align}
\end{lemma}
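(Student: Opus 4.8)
The plan is to establish the bound \eqref{discrete.state.adj.bdd} in two stages, first controlling the discrete state $\by_\disc$ and then the discrete adjoint $\bp_\disc$, using the stability estimate of Proposition~\ref{prop.stab} as the workhorse. The key observation is that the discrete state equation \eqref{discrete_state} is exactly the gradient scheme \eqref{base.GS} with source term $F=\bu_h+f$, \emph{provided} this source has zero average; this holds because $\bu_h\in\Uadh\subset\Uad$ satisfies $\dashint_\O\bu_h\d\x=0$ and $f$ satisfies $\dashint_\O f\d\x=0$ by hypothesis. Applying Proposition~\ref{prop.stab} therefore gives
\[
\norm{\nabla_\disc \by_\disc}{}\le \frac{C_\disc}{\underline{a}}\norm{\bu_h+f}{}
\quad\text{and}\quad
\norm{\Pi_\disc \by_\disc}{}\le \frac{C_\disc^2}{\underline{a}}\norm{\bu_h+f}{}.
\]
Since $\bu_h\in\Uad$ is pointwise bounded by $\max(|a|,|b|)$ (or, if $a$ or $b$ is infinite, one uses instead that $\Uadh\subset\Uad$ and $\norm{\bu_h}{}\lesssim 1$ follows from the error estimate \eqref{est.basic.u} of Theorem~\ref{theorem.control}, since $\bu$ is fixed), we obtain $\norm{\bu_h+f}{}\le\norm{\bu_h}{}+\norm{f}{}\lesssim 1$, and hence the first two terms in \eqref{discrete.state.adj.bdd} are $\lesssim 1$.

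For the adjoint, the difficulty is that the adjoint equation \eqref{discrete_adjoint} is posed over all of $X_\disc$ rather than $X_{\disc,\star}$, so $\bp_\disc$ is only determined up to the kernel direction $1_\disc$; the average-fixing condition $\dashint_\O\Pi_\disc\bp_\disc\d\x=\dashint_\O\bp\d\x$ is what selects a unique representative. To apply Proposition~\ref{prop.stab}, I would check that the right-hand source $G:=\Pi_\disc\by_\disc-\yd$ has zero average: testing \eqref{discrete_adjoint} with $z_\disc=1_\disc$ and using $\nabla_\disc 1_\disc=0$, $\Pi_\disc 1_\disc=1$ from Definition~\ref{def:GD} gives $a_\disc(1_\disc,\bp_\disc)=0=(\Pi_\disc\by_\disc-\yd,1)$, so indeed $\dashint_\O(\Pi_\disc\by_\disc-\yd)\d\x=0$. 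Then the translated adjoint $\bp_\disc-c\,1_\disc$, with $c=\dashint_\O\Pi_\disc\bp_\disc\d\x$, solves \eqref{base.GS} with source $G$ and has zero average, so Proposition~\ref{prop.stab} bounds its $\nabla_\disc$ and $\Pi_\disc$ norms by $\lesssim\norm{G}{}\le\norm{\Pi_\disc\by_\disc}{}+\norm{\yd}{}\lesssim 1$. Since $\nabla_\disc(\bp_\disc-c1_\disc)=\nabla_\disc\bp_\disc$, this directly controls $\norm{\nabla_\disc\bp_\disc}{}$.

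The main obstacle will be recovering $\norm{\Pi_\disc\bp_\disc}{}$ rather than $\norm{\Pi_\disc(\bp_\disc-c1_\disc)}{}$, since the bound from the stability proposition applies to the zero-average translate, whereas the averaging constant $c=\dashint_\O\bp\d\x$ is fixed by the \emph{continuous} co-state. I would handle this by writing $\Pi_\disc\bp_\disc=\Pi_\disc(\bp_\disc-c1_\disc)+c$ and bounding $|c|=|\dashint_\O\bp\d\x|\le|\O|^{-1/2}\norm{\bp}{}\lesssim 1$, which is finite because $\bp$ is a fixed continuous co-state independent of $\disc$. Combining the triangle inequality with the preceding estimate yields $\norm{\Pi_\disc\bp_\disc}{}\lesssim 1$, and assembling all four pieces gives \eqref{discrete.state.adj.bdd}.
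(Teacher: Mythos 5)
Your proof is correct, but it follows a genuinely different route from the paper's. The paper bounds $\WS_\disc(\by)$ and $\WS_\disc(\bp)$ by elementary manipulations of \eqref{def.SD}--\eqref{def.WD} (namely $\WS_\disc(\psi)\lesssim \norm{\psi}{H^1(\O)}+\norm{\div(A\nabla\psi)}{}$), then invokes the full a priori \emph{error} estimates of Theorem \ref{theorem.control} and Proposition \ref{prop.state.adj} together with the triangle inequality, so that boundedness of the discrete solutions is inherited from boundedness of the errors plus boundedness of the fixed continuous solutions. You instead argue by direct \emph{stability} of the two discrete equations: Proposition \ref{prop.stab} applied to \eqref{discrete_state} with $F=\bu_h+f$ (after checking $\dashint_\O(\bu_h+f)\d\x=0$), and to the zero-average translate $\bp_\disc-c1_\disc$ of \eqref{discrete_adjoint} with $F=\Pi_\disc\by_\disc-\yd$ (after checking, via $z_\disc=1_\disc$, that this source has zero average), recovering $\norm{\Pi_\disc\bp_\disc}{}$ from $|c|\le|\O|^{-1/2}\norm{\bp}{}$. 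When $a$ and $b$ are finite your argument is more elementary and self-contained — it bypasses the convergence theory entirely and uses only the pointwise bound $|\bu_h|\le\max(|a|,|b|)$ — and it has the merit of making explicit the compatibility checks that the pure Neumann setting demands before Proposition \ref{prop.stab} can be applied. When $a$ or $b$ is infinite you fall back on Theorem \ref{theorem.control} to bound $\norm{\bu_h}{}$; at that point you still need $\WS_\disc(\by),\WS_\disc(\bp)\lesssim 1$ and $E_h(\bu)\le\norm{\bu}{}$, $E_h(\bp)\le\norm{\bp}{}$ to make the right-hand side of \eqref{est.basic.u} a constant — these are exactly the elementary bounds the paper establishes at the start of its proof, so in that regime the two arguments essentially coincide; you should state them rather than leave them implicit. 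In all cases the hidden constant depends on the data ($f$, $\yd$, $a$, $b$, $\alpha$ and the continuous triple), consistent with how the paper uses the lemma.
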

\begin{proof}
	Let $\phi \in H^{\div}_0(\O)$. Taking $w=0$ in \eqref{def.SD}, we get
	$S_\disc(\phi)\le \norm{\phi}{}+\norm{\nabla \phi}{}.$ By Cauchy-Schwarz inequality, using \eqref{def.CD} and \eqref{def:norm}, for $w \in X_\disc$, 
	\begin{align*}
		\int_\O (\Pi_\disc  w \: \div(\phi)+\nabla_\disc w\cdot\phi)\d\x &\le \norm{\Pi_\disc w}{}\norm{\div(\phi)}{}+\norm{\nabla_\disc w}{}\norm{\phi}{}\\
		&\le C_\disc \norm{w}{\disc}\left(\norm{\div(\phi)}{}+\norm{\phi}{}\right).
	\end{align*}
	With \eqref{def.WD}, this implies $\WS_\disc(\phi) \le C_\disc \left(\norm{\div(\phi)}{}+\norm{\phi}{}\right)$.
	\smallskip

	Therefore, for $A\nabla\psi \in H^{\div}_0(\O)$, recalling the definition \eqref{def.ws} of $\WS_\disc$, we have
	$$\WS_\disc(\psi)\lesssim \norm{\psi}{H^1(\O)}+\norm{\div(A\nabla\psi)}{}+\norm{A\nabla\psi}{} \lesssim \norm{\psi}{H^1(\O)}+\norm{\div(A\nabla\psi)}{} \lesssim 1. $$
	Using Proposition \ref{prop.state.adj}, Theorem \ref{theorem.control}, this shows that 
	$$\norm{\nabla_\disc \by_\disc}{}+ \norm{\Pi_\disc \by_\disc}{}\lesssim\norm{\Pi_\disc \by_\disc-\by}{}+ \norm{\nabla_\disc \by_\disc-\nabla\by}{}  \lesssim 1.$$
	The result for the adjoint variable can be derived similarly  and hence \eqref{discrete.state.adj.bdd} follows.
\end{proof}

In the rest of this section, we establish $\mathcal O(h)$ estimates
between the controls $\bu$, $\bu_h$ and their post-processed versions $\tu$, $\tu_h$.
These estimates justify that the post-processed controls are indeed meaningful quantities.

\medskip

Using \eqref{Projection_ppu}, \eqref{proj_GDM}, the Lipschitz continuity of $P_{[a,b]}$, $\assum{2}$ and Lemma \ref{discrete.bdd}, we have the following estimate between $\bu_h$ and $\tu_h$:
\be\label{est:uh:ppuh}
\norm{\tu_h-\bu_h}{}\le \alpha^{-1}\norm{\Pi_\disc \bp_\disc-\Proj\left(\Pi_\disc \bp_\disc \right)}{}\lesssim\alpha^{-1}h\norm{\nabla_\disc \bp_\disc}{}\lesssim \alpha^{-1}h.
\ee

\medskip

Let us now turn to estimating $\bu-\tu$.
 The co-state $\bp \in H^1(\O)$ in \eqref{continuous_kkt} is still taken such that $\dashint_{\O}^{} \bp\d\x=\dashint_{\O}^{}\Pi_\disc \bp_\disc \d\x$. From Lemma \ref{cont_proj_general}, it follows that there exists a unique constant $\oc \in (m,M)$ such that $\int_{\O}^{}P_{[a,b]}(-\frac{1}{\alpha}\bp+\oc)\d\x=0$, where $m$ and $M$ are defined as in Lemma \ref{cont_proj_general}. Using Lemma \ref{u_proj_c} and recalling \eqref{Projection_ppu}, 
 \be\label{cont_u_ppu}
 \bu(\x)=P_{[a,b]}\left(-\frac{1}{\alpha}\bp(\x)+\oc\right)\quad\mbox{ and }\quad   \tu(\x)={}P_{[a,b]}\left(-\frac{1}{\alpha}\bp_\mesh(\x) \right).
 \ee
 Starting from \eqref{cont_u_ppu} and using the Lipschitz continuity of $ P_{[a,b]}$, the assumption $\WS_\disc(\varphi)\lesssim h$, Corollary \ref{cor.super-convergence} and Proposition \ref{prop.state.adj}, we get a constant  $C$ depending only on  $\alpha$, $f$, $\by_d$, $\bp,$ $\by$ and $\bu$  such that
 \begin{align}
 \norm{\bu-\tu}{}\le {\alpha}^{-1}\norm{\bp_\mesh-\bp+\alpha \oc}{} 
 &\lesssim {\alpha}^{-1}\norm{\bp_\mesh-\bp}{} +|\oc|\nonumber \\
 &\lesssim {\alpha}^{-1}\left(\norm{\bp_\mesh-\Pi_\disc\bp_\disc}{} +\norm{\Pi_\disc\bp_\disc-\bp}{}\right)+|\oc|\nonumber \\
& \lesssim_\eta Ch+|\oc|. \label{c_est}
 \end{align}

 To estimate the last term in \eqref{c_est}, recall the definition of $\Gamma(c)$ from Lemma \ref{cont_proj_general} for $\phi=\alpha^{-1}\bp$.
 $$\Gamma(c)=\int_{\O}^{}P_{[a,b]}\left(-\frac{1}{\alpha}\bp+c\right)\d\x.$$
By choice of $\oc$, $\Gamma(\oc)=0$. From Lemma \ref{u_proj_c}, the choice of $\bp_\disc$ shows that
 \begin{align*}
	\Gamma(0)={}&\int_{\O}^{}P_{[a,b]}\left(-\frac{1}{\alpha}\bp\right)\d\x\\
	={}&\int_{\O}^{}\left(P_{[a,b]}\left(-\frac{1}{\alpha}\bp\right)-P_{[a,b]}\left(\Proj\left(-\frac{1}{\alpha}\Pi_\disc \bp_\disc \right) \right)\right)\d\x.
	\end{align*}
Let $q_\disc$ be the solution to \eqref{discrete_adjoint} with source term $\by-\yd$ (that is, the
solution to the GS for the equation \eqref{adj_cont} satisfied by $\bp$) such that $\dashint_\O\Pi_\disc q_\disc \d\x=\dashint_\O\Pi_\disc \bp_\disc \d\x=\dashint_\O\bp\d\x$. Using the Lipschitz continuity of $ P_{[a,b]}$, the Cauchy--Schwarz inequality, the triangle inequality, Remark \ref{avg_err_estimate}, Proposition \ref{prop.stab}, $\assum{2}$, Theorem \ref{th:error.est.PDE} and Lemma \ref{discrete.bdd}, we obtain
 \begin{align}
 |\Gamma(0)| &\le \int_{\O}^{}\bigg|P_{[a,b]}\left(-\frac{1}{\alpha}\bp\right)-P_{[a,b]}\left(\Proj\left(-\frac{1}{\alpha}\Pi_\disc \bp_\disc \right) \right)\bigg|\d\x \nonumber\\
 &\lesssim {\alpha}^{-1}\norm{\bp-\Proj(\Pi_\disc \bp_\disc)}{}\nonumber\\
 &\lesssim {\alpha}^{-1}\norm{\bp-\Pi_\disc q_\disc}{}+{\alpha}^{-1}\norm{\Pi_\disc q_\disc-\Pi_\disc \bp_\disc}{}+ {\alpha}^{-1}\norm{\Pi_\disc \bp_\disc-\Proj\left(\Pi_\disc \bp_\disc\right)}{} \nonumber \\
  &\lesssim {\alpha}^{-1} \left(\WS_\disc(\bp)+{\alpha}^{-1}\norm{\by-\Pi_\disc \by_\disc}{}+h\norm{\nabla_\disc \bp_\disc}{} \right) \nonumber \\
&\lesssim \alpha^{-1}\left(\WS_\disc(\bp)+ \WS_\disc(\by)+ h\norm{\nabla_\disc \bp_\disc}{}\right)\lesssim \alpha^{-1}h. \label{Gamma.0}
 \end{align}
Let $m,M$ be as in Lemma \ref{cont_proj_general} for $\phi=\alpha^{-1}\bp$. 
Relation \eqref{Gamma.0} shows that $a|\O|<\Gamma(0)<b|\O|$ if $h$ is small
enough; hence, in this case, $0\in (m,M)$. There is therefore a compact interval $Q$ in $(m,M)$ depending only on $\bp$ such that 0 and $\oc$ belong to $Q$. Without loss of generality, we can assume that $\oc\ge 0$. A use of Lemma \ref{cont_proj_general} leads to
 $$\Gamma(\oc)-\Gamma(0)\ge \rho_Q \oc,$$
 where $\rho_Q >0$. This implies $0 \le \oc \lesssim \alpha^{-1}h/\rho_Q$, using \eqref{Gamma.0} and the fact that $\Gamma(\oc)=0$. Combining this with \eqref{c_est}, we infer that 
 \be\label{est:u:ppu}
 \norm{\tu-\bu}{} \lesssim_\eta \left(C+\frac{\alpha^{-1}}{\rho_Q}\right)h.
 \ee

 \section{Numerical experiments} \label{sec:numericalexp}
In this section, we first present the modified active set strategy. This is followed by results of numerical experiments for conforming, non-conforming and mimetic finite difference methods.
 \subsection{A modified active set strategy}\label{activeset}
The interest of choosing an adjoint given by \eqref{proper.pD} is highlighted in Lemma \ref{u_proj_c}: we have the projection relation \eqref{proj_GDM} between the discrete control and adjoint.
Such a relation is at the core of the (standard) active set algorithm \cite{tf}. For a detailed analysis of this method, we refer the reader to \cite{MBIKKK, MBKK, KR02}. Here, we propose a modified active set algorithm that enforces the proper zero average condition,
and thus the proper relation between discrete adjoint and control.

We first notice that, when selecting the $\bp_\disc$ such that \eqref{proper.pD} holds, the KKT optimality conditions \eqref{discrete_kkt} can be rewritten as:
Seek $(\by_{\disc}, \bp_{\disc}, \bu_{h})\in X_{\disc} \times X_{\disc} \times \Uadh$, such that
 \begin{subequations} \label{discrete_kkt1}
 	\begin{align}
 	& a_{\disc}(\by_{\disc},w_{\disc})+\rho\left(\dashint_{\O}\Pi_\disc \by_\disc\d\x\right)\left(\dashint_{\O}\Pi_\disc w_\disc\d\x\right)  \nonumber\\
	&\qquad \qquad \qquad \qquad \qquad  \qquad \qquad = ( \bu_{h} + f, \Pi_\disc w_{\disc})  \qquad\forall \: w_{\disc} \in  X_{\disc},\label{state_discrete1}\\
 	&a_\disc(z_\disc,\bp_\disc)+\rho\left(\dashint_{\O}P_{[a,b]}\left[\Proj(-\alpha^{-1}\Pi_\disc \bp_\disc)\right] \d\x\right)\left(\dashint_{\O} \Pi_\disc z_\disc\d\x\right) \nonumber \\
 	& \qquad \qquad \qquad \qquad \qquad \qquad  \qquad  =( \Pi_\disc \by_\disc-\yd,\Pi_\disc z_\disc) \qquad \forall \: z_{\disc} \in  X_{\disc},\label{adj_discrete1} \\
 	&\qquad \qquad \qquad \qquad (\Pi_\disc \bp_{\disc} +\alpha \bu_h,v_h-\bu_h)\geq  0
 	\qquad  \forall \: v_h \in  \Uadh, \label{opt_discrete1}
 	\end{align}	
 \end{subequations} 
 where $\rho> 0$ is constant. 

 Set $\bar{\mu}_h=-(\alpha^{-1}\Pi_\disc \bp_\disc+\bu_h)$. As the original active set strategy \cite{tf}, the modified active set strategy is an iterative algorithm. As initial guesses, two arbitrary functions, $u_h^0, \mu_h^0$ are chosen. In the $n$th step of the algorithm, we define the set of active and inactive restrictions by
 $$A^{n}_{a,h}(\x)=\{\x: u_h^{n-1}(\x)+\mu_h^{n-1}(\x)<a\},\,\, A^{n}_{b,h}(\x)=\{\x: u^{n-1}_h(\x)+\mu^{n-1}_h(\x)>b\}, $$
 $$I^n_h=\O\setminus (A^{n}_{a,h} \cup A^{n}_{b,h}).$$ 
If $\max\left(\frac{\norm{u_h^{n}-u_h^{n-1}}{L^\infty(\O)}}{\norm{u_h^{n-1}}{L^\infty(\O)}},\frac{\norm{\Pi_\disc p_\disc^n-\Pi_\disc p_\disc^{n-1}}{L^\infty(\O)}}{\norm{\Pi_\disc p_\disc^{n-1}}{L^\infty(\O)}}\right)\le 10^{-10}$, then we terminate the algorithm. In this case, we notice that the relative $L^\infty$ difference between $\Pi_\disc y_\disc^{n-1}$ and $\Pi_\disc y_\disc^n$ is less than $10^{-6}$ for all examples in Section \ref{examples}. Else we
  find $y_\disc^n, {p}_\disc^n$ and $u_h^n$ solution to the system
\begin{subequations}\label{active.original}
  \begin{align}
  	a_\disc(y_\disc^n,w_\disc)+\rho&\left(\dashint_{\O}\Pi_\disc y_\disc^n\d\x\right)\left(\dashint_{\O}\Pi_\disc w_\disc\d\x\right) \nonumber\\
&\qquad\qquad= ( u_h^n+f, \Pi_\disc w_\disc)\,\qquad\forall w_\disc\in X_\disc,\\
  	a_\disc(z_\disc,p_\disc^n)+\rho&\left(\dashint_{\O}P_{[a,b]}\left[\Proj(-\alpha^{-1}\Pi_\disc p_\disc^n)\right] \d\x\right)\left(\dashint_{\O} \Pi_\disc z_\disc\d\x\right)\nonumber\\
  	&\qquad  \qquad  = ( \Pi_\disc y_\disc^n-\yd,\Pi_\disc z_\disc)\,\qquad\forall z_\disc\in X_\disc, 
  	  \end{align}
  	 \begin{equation}
			u_h^n = \begin{cases} 
  	 a &  \mbox{ on }A^{n}_{a,h} \\
  	 \Proj(-\alpha^{-1}\Pi_\disc p_\disc^n)&  \mbox{ on } I^n_h\\
  	 b & \mbox{ on }  A^{n}_{b,h}.
  	 \end{cases}
  	 \end{equation}
\end{subequations}

 The above algorithm consists of non-linear equations. It can however be approximated by a linearized system in the following way, thus leading to our final modified active set algorithm. Instead of solving \eqref{active.original}, we solve
 \begin{subequations} \label{linear_algo_discrete}
 	\begin{align}
 	a_\disc(y_\disc^n,w_\disc)+\rho&\left(\dashint_{\O}\Pi_\disc y_\disc^n\d\x\right)\left(\dashint_{\O}\Pi_\disc w_\disc\d\x\right) \nonumber\\
&\qquad\qquad= ( u_h^n+f, \Pi_\disc w_\disc)\,\qquad\forall w_\disc\in X_\disc, \label{state_algo_discrete} \\
 	a_\disc(z_\disc,p_\disc^n)+\rho&\left(\dashint_{\O} \Pi_\disc p_\disc^n\d\x\right)\left(\dashint_{\O} \Pi_\disc z_\disc\d\x\right)\nonumber\\
 &\qquad \qquad = ( \Pi_\disc y_\disc^n-\yd,\Pi_\disc z_\disc)+\rho S_\disc^{n-1}\,\qquad\forall z_\disc\in X_\disc, \label{adjoint_algo_discrete}
 	 	\end{align}
 \be
  u_h^n = \begin{cases}
  a &  \mbox{ on }A^{n}_{a,h} \\
  \Proj(-\alpha^{-1}\Pi_\disc p_\disc^n)&  \mbox{ on } I^n_h  \\
  b & \mbox{ on }  A^{n}_{b,h},
  \end{cases}
   \label{control_algo_discrete}
  \ee
 \end{subequations}
 where 
\[
	S_\disc^{n-1}=\left(\dashint_{\O}\Pi_\disc z_\disc\d\x\right)\left(\dashint_{\O}\left\{\Pi_\disc p_\disc^{n-1}-P_{[a,b]}\left[\Proj(-\alpha^{-1}\Pi_\disc p_\disc^{n-1})\right]\right\}\d\x\right).
\]
Note that \eqref{control_algo_discrete} can be re-written in the following more commonly used form:
$$u_h^n + \left(1-\mathbbm{1}_{a,h}^n-\mathbbm{1}_{b,h}^n\right)\alpha^{-1}\Pi_\disc p_\disc^n=\mathbbm{1}_{a,h}^na+\mathbbm{1}_{b,h}^nb,$$
where $\mathbbm{1}_{a,h}^n$ and $\mathbbm{1}_{b,h}^n$ denote the characteristic functions of the sets $A^{n}_{a,h}$ and $A^{n}_{b,h}$ respecively.

 \begin{remark}
The convergence analysis of the proposed algorithm is a plan for future study. However, if $(\Pi_\disc y_\disc^n, \Pi_\disc p_\disc^n)$  converges weakly to $(\Pi_\disc \by_\disc,\Pi_\disc \bp_\disc)$ in $H^1(\O)$ and $u_h^n$ converges to $\bu_h$ in $L^2(\O)$, then the solution to \eqref{linear_algo_discrete} converges to the solution of \eqref{discrete_kkt1}  as $n \rightarrow \infty$.
 \end{remark}
 
\subsection{Examples} \label{examples}
In this section, we illustrate examples for the numerical solution of \eqref{model.neumann}.
We use three specific schemes for the state and adjoint variables: conforming finite element (FE) method, non-conforming finite element (nc$\mathbb{P}_1$FE) method and hybrid mimetic mixed (HMM) method. All three are GDMs
with gradient discretisations with bounds on $C_\disc$, order $h$ estimate on $\WS_\disc$, and
satisfying assumptions \assum{1}--\assum{4}, and \eqref{Linfty.est} on quasi-uniform meshes;
see \cite{gdm,GDM_control} and Remarks \ref{rate.ellptic} and \ref{remark:supercv}.

The control variable is discretised using piecewise constant functions on the corresponding meshes. The discrete solution is computed using the modified active set algorithm mentioned in Subsection \ref{activeset} with zero as an initial guess for both $u$ and $\mu$. Here, $U_{a}$ and $Y_{a}$ denote the average values of the computed control $\bu_h$ and the reconstructed state solution $\Pi_\disc \by_\disc$ respectively. Let $\texttt{ni}$ denote the number of iterations required for the convergence of the modified active set algorithm, and $f_{a}$ denote the numerical average of the source term $f$ calculated using the same quadrature rule as in the implementation of the schemes, i.e, $$f_a=\frac{1}{|\O|}\sum_{K \in \mesh}|K|f(\overline{\x}_K),$$ where $\overline{\x}_K$ denotes the center of mass of the cell $K$. This numerical average enables us to evaluate the quality of the quadrature rule for each mesh; in particular, since $f$ has a zero average, any quantity of the order of $f_a$ can be
considered to be equal to zero, up to quadrature error. The relative errors are denoted by
\[
\err_\disc(\by):=\frac{\norm{\Pi_\disc \by_\disc -\by_\mesh}{}}{\norm{\by}{}},\quad
\err_\disc(\nabla\by) :=\frac{\norm{\nabla_\disc \by_\disc -\nabla\by}{}}{\norm{\nabla\by}{}},
\]
\[
\err_\disc(\bp):=\frac{\norm{\Pi_\disc \bp_\disc -\bp_\mesh}{}}{\norm{\bp}{}},\quad
\err_\disc(\nabla\bp) :=\frac{\norm{\nabla_\disc \bp_\disc -\nabla\bp}{}}{\norm{\nabla\bp}{}},
\]
\[
\err(\bu):=\frac{\norm{\bu_h -\bu}{}}{\norm{\bu}{}},
\quad\mbox{ and }\quad
\err(\tu) :=\frac{\norm{\tu_h - \tu}{}}{\norm{\bu}{}}.
\]

The data in the optimal control problem \eqref{model.neumann} are chosen as follows:
\begin{align*}
&\by=2\cos(\pi x)\cos(\pi y), \quad \bp=2\cos(\pi x)\cos(\pi y),\\
&   \quad \alpha=1, \quad \Uad=[a,b],\quad \bu= P_{[a,b]}\left( - \bp+\oc\right),
\end{align*}
where $\oc$ is chosen to ensure that $\dashint_\O \bu\d\x=0$.
The matrix-valued function is given by $A={\rm Id}$ unless otherwise specified. The source term $f$ and the desired state  $\yd$ are then computed using
\begin{equation*}
f=-\Delta \by-\bu, \quad \yd=\by+\Delta \bp.
\end{equation*}


	\subsubsection{\textbf{Example 1 :}}$\O$ = $\left(0,1\right)^2$, $\rho = 10^{-4}$, $a=-1$, $b=1$.

We here consider the computational domain $\O$ = $\left(0,1\right)^2$. We have $\bp(x,y)=-\bp(1-x,y)$ and, since $P_{[-1,1]}$ is odd, $P_{[-1,1]}(-\bp)(1-x,y)=-P_{[-1,1]}(-\bp)(x,y)$. Integrating this relation over $\O$ shows that
$P_{[-1,1]}(-\bp)$ has a zero average and thus, by Lemma \ref{u_proj_c}, that $\oc=0$. We thus see that $\bu= P_{[-1,1]}\left( - \bp\right)$.

\medskip

\textbf{Conforming FEM:} The discrete solution is computed on a family of uniform grids with mesh sizes $h = \frac{1}{2^i}, i=2,\ldots,6$. Due to the symmetry of the mesh and of the solution, approximate solutions are also symmetric and thus have zero average at an order compatible with the stopping criterion in the active set algorithm (the discrete solutions of \eqref{discrete_kkt} are only approximated by this algorithm), see Table \ref{table:algo:FEM_unitsq}. As also seen in this table, the number of iterations of the modified active set algorithm remains very small, and independent on the mesh size. The error estimates and the convergence rates of the control, the post-processed control, the state and the adjoint variables are presented in Table \ref{table:error:FEM_unitsq}. The numerical results corroborate Theorem \ref{theorem.control}, Theorem \ref{thm.fullsuper-convergence} and Corollary \ref{cor.super-convergence}.

\begin{table}[h!!]
	\caption{\small{Example 1, conforming FEM }}
	{\small{\footnotesize
			\begin{center}
				\begin{tabular}{ |c|c|c | c|c| }
					\hline
					$h$ &$U_{a}$ &$f_{a}$ & $Y_{a}$  &$\texttt{ni}$ \\			
					\hline\\[-7pt]  &&\\[-13pt]
					0.250000& 0.002752$\times 10^{-13}$&  0.20699$\times 10^{-14}$& -0.008396$\times 10^{-13}$ &  2\\
					0.125000&-0.008049$\times 10^{-13}$&  0.20912$\times 10^{-14}$& -0.004684$\times 10^{-13}$&   3\\
					0.062500&-0.001370$\times 10^{-13}$&  0.20548$\times 10^{-14}$&  0.010486$\times 10^{-13}$& 3\\
					0.031250&-0.032432$\times 10^{-13}$&  0.21299$\times 10^{-14}$&  0.050725$\times 10^{-13}$&3\\
					0.015625&-0.917129$\times 10^{-13}$&  0.20367$\times 10^{-14}$& -0.495753$\times 10^{-13}$ &  3\\
					\hline 
				\end{tabular}
			\end{center}}}\label{table:algo:FEM_unitsq}
		\end{table}
		\begin{table}[h!!]
			\caption{\small{Convergence results, Example 1, conforming FEM}}
			{\small{\footnotesize
					\begin{center}
						\begin{tabular}{ |c|c | c | c|c|c|c|}
							\hline
							$h$ &$\err_\disc(\by)$ & Order  &$\err_\disc(\nabla\by)$ & Order &$\err_\disc(\bp)$ & Order \\ 			
							\hline\\[-7pt]  &&\\[-13pt]
							0.250000&   0.325104&        -&   0.293424&        -&   0.333213&      -\\
							0.125000&   0.086450&   1.9110&   0.129922&   1.1753&   0.089153&   1.9021\\
							0.062500&   0.022176&   1.9628&   0.064767&   1.0043&   0.022967&   1.9567\\
							0.031250&   0.005591&   1.9879&   0.032578&   0.9914&   0.005798&   1.9860\\
							0.015625&   0.001402&   1.9960&   0.016337&   0.9958&   0.001453&   1.9960\\
							\hline 
						\end{tabular}
					\end{center} 
					\begin{center}
						\begin{tabular}{ |c|c | c |c|c|c|c| }
							\hline
							$h$ &$\err_\disc(\nabla\bp)$ & Order&$\err(\bu)$ & Order  & $\err(\tu)$ & Order  \\
							\hline\\[-7pt]  &&\\[-13pt]	
							0.250000&   0.300144&        -&   0.464300&        -&   0.222006&       -\\
							0.125000&   0.131070&   1.1953&   0.254036&   0.8700&   0.065430&   1.7626\\
							0.062500&   0.064930&   1.0134&   0.126358&   1.0075&   0.016668&   1.9728\\
							0.031250&   0.032599&   0.9941&   0.063453&   0.9938&   0.004226&   1.9797\\
							0.015625&   0.016339&   0.9965&   0.031778&   0.9977&   0.001047&   2.0136\\
							\hline
						\end{tabular}
					\end{center}	}}\label{table:error:FEM_unitsq}
				\end{table}	

\textbf{Non-Conforming FEM:} For comparison, we compute the solutions of the nc$\mathbb{P}_1$ finite element method on the same grids. As for conforming FE, the symmetry of the problem ensures that the approximation solutions have a zero average at an order dictated by the stopping criterion used in the active set algorithm. The results in Tables \ref{table:algo:ncFEM_unitsq} and \ref{table:error:ncFEM_unitsq} are similar to those obtained with the conforming FE.

\begin{table}[h!!]
	\caption{\small{Example 1,	nc$\mathbb{P}_1$FEM }}
	{\small{\footnotesize
			\begin{center}
				\begin{tabular}{ |c|c|c | c|c| }
					\hline                         
					$h$ &$U_{a}$ &$f_{a}$ & $Y_{a}$  &$\texttt{ni}$ \\			
					\hline\\[-6pt]  &&\\[-13pt]
					0.250000& -0.003919$\times 10^{-13}$& 0.206991$\times 10^{-14}$& 0.000518$\times 10^{-12}$&  3\\
					0.125000& -0.067706$\times 10^{-13}$& 0.209121$\times 10^{-14}$& -0.003856$\times 10^{-12}$& 3\\
					0.062500&  0.030900$\times 10^{-13}$& 0.205478$\times 10^{-14}$&  0.017217$\times 10^{-12}$& 3\\
					0.031250& -0.075427$\times 10^{-13}$& 0.212989$\times 10^{-14}$& -0.041154$\times 10^{-12}$& 3\\
					0.015625& -0.187208$\times 10^{-13}$& 0.203674$\times 10^{-14}$&  0.933499$\times 10^{-12}$&  3\\
					\hline 
				\end{tabular}
			\end{center}}}\label{table:algo:ncFEM_unitsq}
		\end{table}
		\begin{table}[h!!]
			\caption{\small{Convergence results, Example 1, nc$\mathbb{P}_1$FEM}}
			{\small{\footnotesize
					\begin{center}
						\begin{tabular}{ |c|c | c | c|c|c|c|}
							\hline
							$h$ &$\err_\disc(\by)$ & Order  &$\err_\disc(\nabla\by)$ & Order &$\err_\disc(\bp)$ & Order \\ 			
							\hline\\[-7pt]  &&\\[-13pt]
							0.250000&   0.148286&        -&   0.409750&        -&   0.146306&    -\\
							0.125000&   0.033274&   2.1559&   0.189599&   1.1118&   0.033499&   2.1268\\
							0.062500&   0.008134&   2.0324&   0.093105&   1.0260&   0.008122&   2.0443\\
							0.031250&   0.002023&   2.0077&   0.046348&   1.0064&   0.002025&   2.0036\\
							0.015625&   0.000505&   2.0019&   0.023148&   1.0016&   0.000505&   2.0041\\
							\hline 
						\end{tabular}
					\end{center} 
					\begin{center}
						\begin{tabular}{ |c|c | c |c|c|c|c| }
							\hline
							$h$ &$\err_\disc(\nabla\bp)$ & Order&$\err(\bu)$ & Order  & $\err(\tu)$ & Order  \\
							\hline\\[-7pt]  &&\\[-13pt]	
							0.250000&   0.408120&        -&   0.473176&        -&   0.284795&     -\\
							0.125000&   0.189770&   1.1047&   0.250457&   0.9178&   0.071206&   1.9999\\
							0.062500&   0.093102&   1.0274&   0.126078&   0.9902&   0.017716&   2.0069\\
							0.031250&   0.046349&   1.0063&   0.063407&   0.9916&   0.004440&   1.9965\\
							0.015625&   0.023149&   1.0016&   0.031770&   0.9970&   0.001109&   2.0007\\
							\hline
						\end{tabular}
					\end{center}	}}\label{table:error:ncFEM_unitsq}
				\end{table}	
\medskip

\textbf{HMM scheme:} This scheme was tested on a series of regular triangular meshes from \cite{benchmark} where the points $\mathcal{P}$ (see \cite[Definition 2.21]{DEH15}) are located at the center of mass of the cells. These meshes are no longer symmetric and thus the symmetry of the approximate solution is lost. Zero averages are thus obtained up to quadrature error, see Table \ref{table:algo:HMM_unitsq}. It has been proved in \cite{bre-05-fam,jd_nn} that the state and adjoint equations enjoy a super-convergence property in $L^2$ norm for such a sequence of meshes; hence, as expected from Theorem \ref{thm.fullsuper-convergence}, so does the scheme for the entire control problem after post-processing of the control. The errors in the energy norm and the $L^2$ norm, together with their orders of convergence, are presented in Table \ref{table:error:HMM_unitsq}.

	\begin{table}[h!!]
					\caption{\small{Example 1, HMM}}
					{\small{\footnotesize
							\begin{center}
								\begin{tabular}{ |c|c |c| c|c| }
									\hline
$h$ &$U_{a}$ & $f_{a}$ &$Y_{a}$  &$\texttt{ni}$ \\			
									\hline\\[-7pt]  &&\\[-13pt]
0.250000& -0.016326& 0.016324& -0.017271& 4\\
0.125000& -0.005300& 0.005300& -0.004968& 4\\
0.062500& -0.001503& 0.001503& -0.001277&  3\\
0.031250& -0.000352& 0.000352& -0.000321&  3\\
									\hline 
								\end{tabular}
							\end{center}}}\label{table:algo:HMM_unitsq}
		\end{table}
\begin{table}[h!!]
			\caption{\small{Convergence results, Example 1, HMM}}
			{\small{\footnotesize
					\begin{center}
						\begin{tabular}{ |c|c | c | c|c|c|c|}
							\hline
							$h$ &$\err_\disc(\by)$ & Order  &$\err_\disc(\nabla\by)$ & Order &$\err_\disc(\bp)$ & Order \\ 			
							\hline\\[-7pt]  &&\\[-13pt]
0.250000& 0.025586&       -&  0.143963&       -&   0.033104&     -\\
0.125000& 0.006764&  1.9194&  0.070970&  1.0204&   0.010044& 1.7207\\
0.062500& 0.001709&  1.9847&  0.035358&  1.0052&   0.002443& 2.0397\\
0.031250& 0.000429&  1.9958&  0.017663&  1.0013&   0.000619& 1.9811\\
							\hline 
						\end{tabular}
					\end{center} 
					\begin{center}
						\begin{tabular}{ |c|c | c |c|c|c|c| }
							\hline
							$h$ &$\err_\disc(\nabla\bp)$ & Order&$\err(\bu)$ & Order  & $\err(\tu)$ & Order  \\
							\hline\\[-7pt]  &&\\[-13pt]	
0.250000&  0.144012&     - &  0.214573&      -  &  0.034890&      -\\
0.125000&  0.070972& 1.0209&  0.109352&   0.9725&  0.009603& 1.8613\\
0.062500&  0.035359& 1.0052&  0.055045&   0.9903&  0.002403& 1.9989\\
0.031250&  0.017663& 1.0013&  0.027551&   0.9985&  0.000605& 1.9893\\
							\hline
						\end{tabular}
					\end{center}	}}\label{table:error:HMM_unitsq}
				\end{table}	
For all three methods (conforming $\mathbb{P}_1$ FE, nc$\mathbb{P}_1$ FE and HMM), the theoretical rates of convergence are confirmed by the numerical outputs. Without post-processing, an $\mathcal O(h)$ convergence rate is obtained on the controls, which validates Theorem \ref{theorem.control}. With post-processing of the controls, the order of convergence of Theorem \ref{thm.fullsuper-convergence} is recovered. We also notice that the super-convergence on the state and adjoint stated in Corollary \ref{cor.super-convergence} is confirmed, provided that the exact state and adjoint are properly projected (usage of the functions $\by_\mesh$ and $\bp_\mesh$ in $\err_\disc(\by)$ and $\err_\disc(\bp)$).

\begin{remark}\label{dependency.rho}
As seen in Table \ref{table:algo:HMM_unitsq}, the modified active set algorithm converges in very few iterations if $\rho=10^{-4}$. We however found that, if $\rho=1$, the modified active set algorithm no longer converges. Further work will investigate in more depth the convergence analysis of the modified active set algorithm, to understand better its dependency with respect to $\rho$.
\end{remark}

		
\subsubsection{\textbf{Example 2 :}} $A=100Id$, $\O$ = $\left(0,1\right)^2$, $\rho =10^{-2}$, $a=-1$, $b=1$.

In this subsection, we present some numerical results for the control problem defined on the unit square domain $\O = \left(0,1\right)^2$ and $A=100Id$. As explained in Example 1, $a=-1$ and $b=1$ imply $\oc=0$.

\medskip

\textbf{Conforming FEM:} We provide in Table \ref{table:algo:FEM_A} the  details of active set algorithm for the conforming finite element method. As expected, the symmetries of the problem provide approximate solutions with a nearly perfect average. For such grids, we obtain super-convergence result for the post-processed control. The errors between the true and computed solutions are computed for different mesh sizes and presented in Table \ref{table:error:FEM_A}. They still follow the expected theoretical rates, and the number of iterations of the
active set algorithm remain small.
	
\begin{table}[h!!]
	\caption{\small{Example 2, conforming FEM }}
	{\small{\footnotesize
			\begin{center}
				\begin{tabular}{ |c|c|c | c|c| }
					\hline
					$h$ &$U_{a}$ &$f_{a}$ & $Y_{a}$  &$\texttt{ni}$ \\			
					\hline\\[-7pt]  &&\\[-13pt]
0.250000&  0.002280$\times 10^{-11}$& 0.209361$\times 10^{-12}$  &   0.009117$\times 10^{-11}$&   2\\
0.125000&  0.018065$\times 10^{-11}$& 0.209375$\times 10^{-12}$  &   0.024496$\times 10^{-11}$&   3\\
0.062500&  0.027564$\times 10^{-11}$& 0.209400$\times 10^{-12}$  &  -0.012778$\times 10^{-11}$&   3\\
0.031250& -0.103755$\times 10^{-11}$& 0.209420$\times 10^{-12}$  &  -0.028850$\times 10^{-11}$&   3\\
0.015625& -0.168277$\times 10^{-11}$& 0.209297$\times 10^{-12}$  &   0.624160$\times 10^{-11}$&   3\\
					\hline 
				\end{tabular}
			\end{center}}}\label{table:algo:FEM_A}
		\end{table}
		\begin{table}[h!!]
			\caption{\small{Convergence results, Example 2,  conforming FEM}}
			{\small{\footnotesize
					\begin{center}
						\begin{tabular}{ |c|c | c | c|c|c|c|}
							\hline
							$h$ &$\err_\disc(\by)$ & Order  &$\err_\disc(\nabla\by)$ & Order &$\err_\disc(\bp)$ & Order \\ 			
							\hline\\[-7pt]  &&\\[-13pt]
							0.250000&   0.328223&        -&   0.296014&        -&   0.328304&   -\\
							0.125000&   0.087182&   1.9126&   0.130232&   1.1846&   0.087209&   1.9125\\
							0.062500&   0.022409&   1.9600&   0.064814&   1.0067&   0.022417&   1.9599\\
							0.031250&   0.005653&   1.9870&   0.032584&   0.9921&   0.005655&   1.9870\\
							0.015625&   0.001417&   1.9963&   0.016338&   0.9960&   0.001417&   1.9963\\
							\hline 
						\end{tabular}
					\end{center} 
					\begin{center}
						\begin{tabular}{ |c|c | c |c|c|c|c| }
							\hline
							$h$ &$\err_\disc(\nabla\bp)$ & Order&$\err(\bu)$ & Order  & $\err(\tu)$ & Order  \\
							\hline\\[-7pt]  &&\\[-13pt]	
							0.250000&   0.296080&        -&   0.463836&        -&   0.218080&          -\\
							0.125000&   0.130243&   1.1848&   0.253857&   0.8696&   0.064390&   1.7600\\
							0.062500&   0.064816&   1.0068&   0.126333&   1.0068&   0.016358&   1.9768\\
							0.031250&   0.032584&   0.9922&   0.063449&   0.9936&   0.004145&   1.9807\\
							0.015625&   0.016338&   0.9960&   0.031778&   0.9976&   0.001026&   2.0139\\
							\hline
						\end{tabular}
					\end{center}	}}\label{table:error:FEM_A}
				\end{table}

\medskip

\textbf{Non-Conforming FEM:} The results, presented in Tables \ref{table:algo:ncFEM_A} and \ref{table:error:ncFEM_A}, are similar to those for the conforming FE scheme.

\begin{table}[h!!]
	\caption{\small{Example 2, nc$\mathbb{P}_1$FEM }}
	{\small{\footnotesize
			\begin{center}
				\begin{tabular}{ |c|c|c | c|c| }
					\hline
					$h$ &$U_{a}$ &$f_{a}$ & $Y_{a}$  &$\texttt{ni}$ \\			
					\hline\\[-7pt]  &&\\[-13pt]
  0.250000&  -0.000977$\times 10^{-10}$ &  0.209361$\times 10^{-12}$  & 0.000739 $\times 10^{-10}$  &   3\\
  0.125000&  -0.006518$\times 10^{-10}$&   0.209375$\times 10^{-12}$  &-0.000960$\times 10^{-10}$   &   3\\
  0.062500&  -0.004320$\times 10^{-10}$&   0.209400$\times 10^{-12}$ &  0.002330$\times 10^{-10}$  &   3\\
  0.031250&  -0.007236$\times 10^{-10}$ &  0.209420 $\times 10^{-12}$ & 0.054029 $\times 10^{-10}$  &   3\\
  0.015625&   0.346321$\times 10^{-10}$ &   0.209297$\times 10^{-12}$ &  0.276914 $\times 10^{-10}$  &   3\\
					\hline 
				\end{tabular}
			\end{center}}}\label{table:algo:ncFEM_A}
		\end{table}
		\begin{table}[h!!]
			\caption{\small{Convergence results, Example 2, nc$\mathbb{P}_1$FEM}}
			{\small{\footnotesize
					\begin{center}
						\begin{tabular}{ |c|c | c | c|c|c|c|}
							\hline
							$h$ &$\err_\disc(\by)$ & Order  &$\err_\disc(\nabla\by)$ & Order &$\err_\disc(\bp)$ & Order \\ 			
							\hline\\[-7pt]  &&\\[-13pt]
							0.250000&   0.148286&        -&   0.409750&        -&   0.148262&        -\\
							0.125000&   0.033263&   2.1564&   0.189599&   1.1118&   0.033265&   2.1561\\
							0.062500&   0.008131&   2.0324&   0.093105&   1.0260&   0.008131&   2.0325\\
							0.031250&   0.002022&   2.0077&   0.046348&   1.0064&   0.002022&   2.0076\\
							0.015625&   0.000505&   2.0019&   0.023148&   1.0016&   0.000505&   2.0019\\
							\hline 
						\end{tabular}
					\end{center} 
					\begin{center}
						\begin{tabular}{ |c|c | c |c|c|c|c| }
							\hline
							$h$ &$\err_\disc(\nabla\bp)$ & Order&$\err(\bu)$ & Order  & $\err(\tu)$ & Order  \\
							\hline\\[-7pt]  &&\\[-13pt]	
							0.250000&   0.409732&        -&   0.473136&        -&   0.286537&       -\\
							0.125000&   0.189600&   1.1117&   0.250390&   0.9181&   0.071004&   2.0128\\
							0.062500&   0.093105&   1.0260&   0.126079&   0.9899&   0.017719&   2.0026\\
							0.031250&   0.046348&   1.0064&   0.063407&   0.9916&   0.004439&   1.9970\\
							0.015625&   0.023148&   1.0016&   0.031770&   0.9970&   0.001109&   2.0004\\	
							\hline
						\end{tabular}
					\end{center}	}}\label{table:error:ncFEM_A}
				\end{table}

\textbf{HMM scheme: }
The results are presented in Tables \ref{table:algo:HMM_A} and \ref{table:error:HMM_A}. They are qualitatively similar to those for Example 1. As mentioned before, the algorithm is not convergent for $\rho=1$.

		\begin{table}[h!!]
			\caption{\small{Example 2, HMM}} 
			{\small{\footnotesize
					\begin{center}
						\begin{tabular}{ |c|c|c | c|c| }
							\hline
							$h$ &$U_{a}$ &$f_{a}$ & $Y_{a}$  &$\texttt{ni}$ \\			
							\hline\\[-7pt]  &&\\[-13pt]
					0.250000&   -1.000000 &  1.817180& 81.718002 &   -\\
					0.125000&   -0.528617 &  0.523335& -0.528289  &  6\\
					0.062500&   -0.136036 &  0.134678& -0.135812 &   5\\
					0.031250&   -0.034208 &  0.033866& -0.034178 &   5\\	
							\hline 
						\end{tabular}
					\end{center}}}\label{table:algo:HMM_A}
				\end{table}
				\begin{table}[h!!]
					\caption{\small{Convergence results, Example 2, HMM}}
					{\small{\footnotesize
							\begin{center}
								\begin{tabular}{ |c|c | c | c|c|c|c|}
									\hline
									$h$ &$\err_\disc(\by)$ & Order  &$\err_\disc(\nabla\by)$ & Order &$\err_\disc(\bp)$ & Order \\ 			
									\hline\\[-7pt]  &&\\[-13pt]
	0.250000 &  81.717083 &         -&       0.143996  &     -  &  392673.3&       - \\
 	0.125000 &   0.528572 &   7.2724 &       0.070971 &   1.0207&    0.876152&    18.7737\\
    0.062500 &   0.135884 &   1.9597 &       0.035359 &   1.0052&    0.216641&  2.0159\\
	0.031250 &   0.034196 &   1.9905 &       0.017663 &   1.0013&    0.054238 & 1.9979\\			
									\hline 
								\end{tabular}
							\end{center} 
							\begin{center}
								\begin{tabular}{ |c|c | c |c|c|c|c| }
									\hline
									$h$ &$\err_\disc(\nabla\bp)$ & Order&$\err(\bu)$ & Order  & $\err(\tu)$ & Order  \\
									\hline\\[-7pt]  &&\\[-13pt]	
		0.250000 &   0.143987 &        -&    1.686504 &     -   &    1.691346 &       - \\
		0.125000 &   0.070970 &   1.0207&    0.878849 &   0.9403&    0.874778 &     0.9512 \\
		0.062500 &   0.035358 &   1.0052&    0.237129 &   1.8899&    0.230873 &    1.9218 \\
		0.031250 &   0.017663 &   1.0013&    0.064673 &   1.8744&    0.058612 &    1.9778\\
							\hline
								
								\end{tabular}
							\end{center}	}}\label{table:error:HMM_A}
						\end{table}			
														
		
\subsubsection{\textbf{Example 3 :}} $\O=(0,1)^2$, $ \rho=10^{-4},$ $a=-0.5,\, b=1$.
In this case, since $P_{[a,b]}$ is no longer odd, $P_{[a,b]}(-\bp)$ no longer has a zero average and, to compute $\err_\disc(\bu)$, we need to find $\oc$ such that $\dashint_\O P_{[a,b]}(-\bp+\oc)\d\x=0$. This $\oc$ can be found by a bisection method, by computing the averages on a very thin mesh and bisecting until we find a proper $\oc$. Using a mesh of size $h= 0.00195$, we find  $c \approx -0.24596797$.

\medskip

\textbf{Conforming FEM:} The numerical results obtained using conforming finite element method are shown in Tables \ref{table:algo:FEM_a} and \ref{table:error:FEM_a} respectively. Since there is a loss of symmetry, the approximate solutions have zero averages only up to quadrature error (compare $U_a$ and $f_a$ in Table \ref{table:algo:FEM_a}). 
Here, we observed that the modified active set algorithm converges only when $\rho\le 10^{-1}$. When it does,
though, the number of iterations remain very small. As in Examples 1 and 2, the theoretical rates of convergence are confirmed by these numerical outputs.

\begin{table}[h!!]
	\caption{\small{Example 3, conforming FEM }}
	{\small{\footnotesize
			\begin{center}
				\begin{tabular}{ |c|c|c | c|c| }
					\hline
					$h$ &$U_{a}$ &$f_{a}$ & $Y_{a}$  &$\texttt{ni}$ \\			
					\hline\\[-7pt]  &&\\[-13pt]
					0.250000&   0.0020160&  -0.0020160&    0.201602$\times 10^{-6}$&   4\\
					0.125000&   0.0055595&  -0.0055595&    0.555952$\times 10^{-6}$&   4\\
					0.062500&  -0.0004794&   0.0004795&   -0.047944$\times 10^{-6}$&   4\\
					0.031250&   0.0001470&  -0.0001470&    0.014705$\times 10^{-6}$&   5\\
					0.015625&  -0.0000136&   0.0000136&   -0.001362$\times 10^{-6}$&   5\\
					\hline 
				\end{tabular}
			\end{center}}}\label{table:algo:FEM_a}
		\end{table}
		\begin{table}[h!!]
			\caption{\small{Convergence results, Example 3, conforming FEM}}
			{\small{\footnotesize
					\begin{center}
						\begin{tabular}{ |c|c | c | c|c|c|c|}
							\hline
							$h$ &$\err_\disc(\by)$ & Order  &$\err_\disc(\nabla\by)$ & Order &$\err_\disc(\bp)$ & Order \\ 			
							\hline\\[-7pt]  &&\\[-13pt]
							0.250000&   0.325266&        -&   0.293567&       -  &   0.346894&         - \\
							0.125000&   0.086733&   1.9070&   0.130041&    1.1747&   0.097046&   1.8378\\
							0.062500&   0.022291&   1.9601&   0.064790&    1.0051&   0.025081&   1.9521\\
							0.031250&   0.005624&   1.9868&   0.032581&    0.9917&   0.006219&   2.0117\\
							0.015625&   0.001410&   1.9963&   0.016337&    0.9959&   0.001569&   1.9865\\
							\hline 
						\end{tabular}
					\end{center} 
					\begin{center}
						\begin{tabular}{ |c|c | c |c|c|c|c| }
							\hline
							$h$ &$\err_\disc(\nabla\bp)$ & Order&$\err(\bu)$ & Order  & $\err(\tu)$ & Order  \\
							\hline\\[-7pt]  &&\\[-13pt]	
							0.250000&   0.300149&        -&    0.466701&      -  &   0.234197&       -\\
							0.125000&   0.131075&   1.1953&    0.268982&   0.7950&   0.064265&   1.8656\\
							0.062500&   0.064931&   1.0134&    0.138258&   0.9602&   0.016053&   2.0012\\
							0.031250&   0.032599&   0.9941&    0.069620&   0.9898&   0.003996&   2.0064\\
							0.015625&   0.016339&   0.9965&    0.034944&   0.9945&   0.001002&   1.9950\\
							\hline
						\end{tabular}
					\end{center}	}}\label{table:error:FEM_a}
				\end{table}	

\medskip

\textbf{Non-Conforming FEM:} The results are similar to those obtained with the conforming FE method (see Tables \ref{table:algo:ncFEM_a} and \ref{table:error:ncFEM_a}).

\begin{table}[h!!]
	\caption{\small{Example 3, nc$\mathbb{P}_1$FEM}}
	{\small{\footnotesize
			\begin{center}
				\begin{tabular}{ |c|c|c | c|c| }
					\hline
					$h$ &$U_{a}$ &$f_{a}$ & $Y_{a}$  &$\texttt{ni}$ \\			
					\hline\\[-7pt]  &&\\[-13pt]
					0.250000&   0.002016&  -0.002016&   0.0201601$\times 10^{-5}$&  4\\
					0.125000&   0.005560&  -0.005559&  -0.1301803$\times 10^{-5}$&   5\\
					0.062500&  -0.000480&   0.000479&  -0.0017424$\times 10^{-5}$&   5\\
					0.031250&   0.000147&  -0.000147&   0.0011093$\times 10^{-5}$&   5\\
					0.015625&  -0.000014&   0.000014&  -0.0001436$\times 10^{-5}$&   5\\
					\hline 
				\end{tabular}
			\end{center}}}\label{table:algo:ncFEM_a}
		\end{table}
		\begin{table}[h!!]
			\caption{\small{Convergence results, Example 3, nc$\mathbb{P}_1$FEM}}
			{\small{\footnotesize
					\begin{center}
						\begin{tabular}{ |c|c | c | c|c|c|c|}
							\hline
							$h$ &$\err_\disc(\by)$ & Order  &$\err_\disc(\nabla\by)$ & Order &$\err_\disc(\bp)$ & Order \\ 			
							\hline\\[-7pt]  &&\\[-13pt]
							0.250000&   0.148286&        -&    0.409750&        -&   0.141781&            -\\
							0.125000&   0.033270&   2.1561&    0.189600&   1.1118&   0.032889&   2.1080\\
							0.062500&   0.008133&   2.0324&    0.093105&   1.0260&   0.008041&   2.0322\\
							0.031250&   0.002022&   2.0077&    0.046348&   1.0064&   0.001994&   2.0118\\
							0.015625&   0.000505&   2.0019&    0.023148&   1.0016&   0.000498&   2.0015\\
							\hline 
						\end{tabular}
					\end{center} 
					\begin{center}
						\begin{tabular}{ |c|c | c |c|c|c|c| }
							\hline
							$h$ &$\err_\disc(\nabla\bp)$ & Order&$\err(\bu)$ & Order  & $\err(\tu)$ & Order  \\
							\hline\\[-7pt]  &&\\[-13pt]	
							0.250000&   0.408120&        -&   0.494425&        -&   0.269888&    -\\
							0.125000&   0.189770&   1.1047&   0.269866&   0.8735&   0.080165&   1.7513\\
							0.062500&   0.093102&   1.0274&   0.138223&   0.9652&   0.019967&   2.0054\\
							0.031250&   0.046349&   1.0063&   0.069625&   0.9893&   0.005091&   1.9715\\
							0.015625&   0.023149&   1.0016&   0.034941&   0.9947&   0.001283&   1.9883\\
							\hline
						\end{tabular}
					\end{center}	}}\label{table:error:ncFEM_a}
				\end{table}	

\medskip

\textbf{HMM scheme:} Tables \ref{table:algo:HMM_a} and \ref{table:error:HMM_a} show that the HMM scheme behave similarly to the FE schemes. Note that, here too, the convergence of the modified active set algorithm is only observed if $\rho\le 10^{-1}$.

	\begin{table}[h!!]
		\caption{\small{Example 3, HMM }}
		{\small{\footnotesize
				\begin{center}
					\begin{tabular}{ |c|c|c | c|c| }
						\hline
						$h$ &$U_{a}$ &$f_{a}$ & $Y_{a}$  &$\texttt{ni}$ \\			
						\hline\\[-7pt]  &&\\[-13pt]
  0.250000&  -0.019043&   0.019041&  -0.017271&   5\\
  0.125000&  -0.005459&   0.005459&  -0.004968&   5\\
  0.062500&  -0.001300&   0.001300&  -0.001277&   5\\
  0.031250&  -0.000331&   0.000331&  -0.000321&   5\\
						
						\hline 
					\end{tabular}
				\end{center}}}\label{table:algo:HMM_a}
			\end{table}
			\begin{table}[h!!]
				\caption{\small{Convergence results, Example 3, HMM }}
				{\small{\footnotesize
						\begin{center}
							\begin{tabular}{ |c|c | c | c|c|c|c|}
								\hline
								$h$ &$\err_\disc(\by)$ & Order  &$\err_\disc(\nabla\by)$ & Order &$\err_\disc(\bp)$ & Order \\ 			
								\hline\\[-7pt]  &&\\[-13pt]
 0.250000&   0.026037&        -&   0.144014&        -&   0.055044&    - \\
 0.125000&   0.006841&   1.9284&   0.070972&   1.0209&   0.013361&   2.0425\\
 0.062500&   0.001728&   1.9853&   0.035359&   1.0052&   0.003342&   1.9995\\
 0.031250&   0.000433&   1.9956&   0.017663&   1.0013&   0.000843&   1.9869\\
								\hline 
							\end{tabular}
						\end{center} 
						\begin{center}
							\begin{tabular}{ |c|c | c |c|c|c|c| }
								\hline
								$h$ &$\err_\disc(\nabla\bp)$ & Order&$\err(\bu)$ & Order  & $\err(\tu)$ & Order  \\
								\hline\\[-7pt]  &&\\[-13pt]	
 0.250000&   0.144013&        -&   0.237647&       - &   0.050184&        -\\
 0.125000&   0.070972&   1.0209&   0.120112&   0.9844&   0.013482&   1.8962\\
 0.062500&   0.035359&   1.0052&   0.061226&   0.9722&   0.003468&   1.9586\\
 0.031250&   0.017663&   1.0013&   0.030583&   1.0014&   0.000872&   1.9916\\
								\hline
							\end{tabular}
						\end{center}	}}\label{table:error:HMM_a}
					\end{table}	


\section{Appendix} \label{sec:appendix}

The proofs of error estimates for control, state and adjoint variables are obtained by modifying the proofs of the corresponding results in \cite{GDM_control}. For the sake of completeness and readability, we provide here detailed proofs, highlighting in chosen places where modifications are required due to the pure Neumann boundary
conditions (which mostly amount to making sure that certain averages have been properly fixed).

\begin{proof}[Proof of Theorem \ref{theorem.control}]
	Define the following auxiliary discrete
	problem: 
	\begin{subequations} \label{discrete_aux}
		\begin{align}
&\mbox{Seek $(y_{\disc}(\bu), p_{\disc}(\bu) )\in X_{\disc,\star} \times X_{\disc} $ such that}\nonumber\\
& a_{\disc}(y_{\disc}(\bu),w_{\disc}) = (f + \bu, \Pi_\disc w_{\disc})\qquad
		\forall \: w_{\disc} \in  X_{\disc,\star}, \label{state_aux}\\
		& a_{\disc}(z_{\disc}, p_{\disc}(\bu)) = (\by-\yd, \Pi_\disc z_{\disc})\qquad  \forall \: z_{\disc} \in  X_{\disc}, \label{adj_aux}
		\end{align}
	\end{subequations} 
	where the co-state $p_{\disc}(\bu)$ is chosen such that $\dashint_{\O}^{}\Pi_\disc p_\disc(\bu) \d\x = \dashint_{\O}^{}\Pi_\disc \bp_\disc \d\x=\dashint_{\O}^{} \bp\d\x$. For Neumann boundary conditions, this particular choice is essential as it ensures that $p_\disc(\bu)-\bp_\disc\in X_{\disc,\star}$
can be used as a test function $w_\disc$ in \eqref{discrete_state} and \eqref{state_aux}.
	Recalling that $\Proj$ is the orthogonal projection on piecewise constant functions on $\mesh$, we obtain $\Proj(\Uad) \subset \Uh$. Also, for $u \in \Uad$ and $K \in \mesh$, $\Proj u_{|K} =\dashint_K u\d\x \in [a,b]$ and,  using \eqref{comp.cond},
	$$\dashint_\O \Proj u\d\x=\sum_{K \in \mesh}^{}\dashint_K \Proj u\d\x=\sum_{K \in \mesh}^{}\dashint_K u \d\x=\dashint_\O u \d\x=0.$$
	Hence, $\Proj(\Uad)\subset \Uadh$.
	
	Set $P_{\disc,\alpha}(\bu)=\alpha^{-1}\Pi_\disc p_{\disc}(\bu)$,\,
	$\bP_{\disc,\alpha}=\alpha^{-1}\Pi_\disc \bp_{\disc}$ and  $\bP_{\alpha}=\alpha^{-1} \bp$. 
Since $\bu_h \in \Uadh \subset \Uad$ and $\Proj\bu \in \Uadh$, from the optimality conditions (\eqref{opt_cont} and \eqref{opt_discrete}),
		\begin{align*}
		-\alpha(\bP_{\alpha}+\bu,\bu-\bu_h)\ge{}& 0,\\
		\alpha(\bP_{\disc,\alpha}+\bu_h,\bu - \bu_h) 
		\ge{}& \alpha(\bP_{\disc,\alpha}+\bu_h,\bu- \Proj\bu).
		\end{align*}
Adding these two inequalities yields
		\begin{align}
		\alpha\norm{\bu-\bu_h}{}^2  \le{}&-\alpha(\bP_{\disc,\alpha}+\bu_h,\bu- \Proj(\bu))
		+\alpha(\bP_{\disc,\alpha} - \bP_{\alpha},\bu-\bu_h)
		\nonumber\\
		={}& -\alpha(\bP_{\disc,\alpha} +\bu_h,\bu-\Proj\bu)+\alpha(\bP_{\disc,\alpha}-P_{\disc,\alpha}(\bu),\bu-\bu_h)
	\nonumber\\
		& \qquad-\alpha(\bP_{\alpha}-P_{\disc,\alpha}(\bu),\bu-\bu_h).
		\label{inter3}
		\end{align}
		By orthogonality property of $\Proj$ we have $(\bu_h,\bu-\Proj\bu)=0$ and 
		$(\Proj\bP_{\alpha},\bu-\Proj\bu)=0$. Therefore, the first term in the right-hand side of \eqref{inter3} can be re-cast as
		\begin{align}
		-\alpha(\bP_{\disc,\alpha}+{}&\bu_h,\bu-\Proj\bu)\nonumber\\
		={}&-\alpha(\bP_{\alpha}-\Proj\bP_{\alpha},\bu-\Proj\bu)+\alpha(\bP_{\alpha}-P_{\disc,\alpha}(\bu),\bu-\Proj\bu)\nonumber\\
& +\alpha(P_{\disc,\alpha}(\bu)-\bP_{\disc,\alpha},\bu-\Proj\bu).
		\label{inter4}
		\end{align}
		By Cauchy-Schwarz inequality, the first term on the right hand side of {\eqref{inter4}} is estimated as 
		\be\label{est.T1}
		-\alpha(\bP_{\alpha}-\Proj\bP_{\alpha},\bu-\Proj\bu)\le E_h(\bp)E_h(\bu).
		\ee
		Equation \eqref{adj_aux} shows that $p_\disc(\bu)$ is the
		solution of the GS corresponding to the adjoint
		problem \eqref{adj_cont}, whose solution is $\bp$. Therefore, using the fact that  $\dashint_{\O}^{}\Pi_\disc p_\disc(\bu) \d\x = \dashint_{\O}^{} \bp\d\x$ (note that the specific
		relation between the continuous and discrete co-states is essential here), by Theorem \ref{th:error.est.PDE},
		\begin{equation}
		\norm{\bP_{\alpha}-P_{\disc,\alpha}(\bu)}{}
		=\alpha^{-1}\norm{\bp-\Pi_\disc p_\disc(\bu)}{}\lesssim \alpha^{-1}\WS_\disc(\bp).
		\label{est.bPPdisc}
		\end{equation}
		Hence, using the Cauchy--Schwarz inequality,
		\be
	\alpha(\bP_{\alpha}-P_{\disc,\alpha}(\bu),\bu-\Proj\bu)\lesssim \WS_\disc(\bp)E_h(\bu).
		\label{est.T2}
		\ee
		 Using the definitions of $C_\disc$, $\norm{\cdot}{\disc}$ and the fact that $p_\disc(\bu)-\bp_\disc \in X_{\disc,\star}$, we find that
				\begin{align}
				\norm{\Pi_\disc p_\disc(\bu)-\Pi_\disc \bp_\disc}{}^2 \lesssim{}&\norm{ p_\disc(\bu)-\bp_\disc}{\disc}^2=\norm{\nabla_\disc p_\disc(\bu)-\nabla_\disc \bp_\disc}{}^2.\label{est.discreteadj}
				\end{align} By writing the difference of \eqref{adj_aux}
				and \eqref{discrete_adjoint} we see that $p_\disc(\bu)-\bp_\disc$
				is the solution to the GS \eqref{base.GS} with
				source term $F=\by-\Pi_\disc\by_\disc$. i.e, for all $z_\disc \in X_{\disc}$
				\begin{align*}
				a_\disc(z_\disc, p_\disc(\bu)-\bp_\disc) =& (\by-\Pi_\disc\by_\disc, \Pi_\disc z_\disc).
				\end{align*}
				Choose $z_\disc=p_\disc(\bu)-\bp_\disc$ in the above equality and use it in \eqref{est.discreteadj} to obtain
				\begin{align*}
				\norm{\Pi_\disc p_\disc(\bu)-\Pi_\disc \bp_\disc}{}^2 \lesssim{}\norm{\nabla_\disc p_\disc(\bu)-\nabla_\disc \bp_\disc}{}^2 \lesssim{}\norm{\by-\Pi_\disc\by_\disc}{}\norm{\Pi_\disc p_\disc(\bu)-\Pi_\disc \bp_\disc}{}.
				\end{align*}
			As a consequence,
			\begin{align*}
			\norm{P_{\disc,\alpha}(\bu)-\bP_{\disc,\alpha}}{}
			={}&
		\alpha^{-1}	\norm{\Pi_\disc p_\disc(\bu)-\Pi_\disc \bp_\disc}{}
			\lesssim{} 	\alpha^{-1}\norm{\by-\Pi_\disc\by_\disc}{}\\
			\lesssim{}& 	\alpha^{-1}\norm{\by-\Pi_\disc y_\disc(\bu)}{}
			+ 	\alpha^{-1}\norm{\Pi_\disc y_\disc(\bu)-\Pi_\disc \by_\disc}{}.
			\end{align*}
Use Theorem \ref{th:error.est.PDE} with $\psi=\by$ to bound the first term in the above expression. This along with an application of Young's inequality yields an estimate for the last term in \eqref{inter4} as
			\begin{multline}\label{est.T3}
			\alpha(P_{\disc,\alpha}(\bu)-\bP_{\disc,\alpha},\bu-\Proj\bu)\\
		\le  C_1 E_h(\bu)\WS_\disc(\by)+C_1E_h(\bu)^2
			+\frac{1}{4}\norm{\Pi_\disc y_\disc(\bu)-\Pi_\disc \by_\disc}{}^2
			\end{multline}
			where $C_1$ depends only on $\O$, $A$ and an upper bound of $C_\disc$. Plugging \eqref{est.T1}, \eqref{est.T2} and \eqref{est.T3} in \eqref{inter4} yields
\begin{align}
-\alpha(\bP_{\disc,\alpha}+\bu_h,\bu-\Proj\bu)\le{}& E_h(\bp)E_h(\bu)+C_2E_h(\bu)\WS_\disc(\bp)+ C_1 E_h(\bu)\WS_\disc(\by)\nonumber \\
& +C_1E_h(\bu)^2
		+\frac{1}{4}\norm{\Pi_\disc y_\disc(\bu)-\Pi_\disc \by_\disc}{}^2,\label{est:firstterm}
		\end{align}
where $C_2$ is the hidden constant in \eqref{est.T2}. Let us turn to the second term in the right-hand side of \eqref{inter3}.
		From \eqref{discrete_adjoint} and \eqref{adj_aux}, for all $z_\disc \in X_{\disc}$,
		\be\label{eq:1}
		a_\disc(z_\disc,\bp_\disc- p_\disc(\bu)) = (\Pi_\disc \by_\disc- \by, \Pi_\disc z_\disc). 
		\ee
		Also, from \eqref{discrete_state} and \eqref{state_aux}, for all $w_\disc \in X_{\disc,\star}$,
		\begin{align}
		a_\disc(\by_\disc- y_\disc(\bu), w_\disc) =& (\bu_h-\bu, \Pi_\disc w_\disc).
		\label{eq:2}
		\end{align}
	Choose $z_\disc = \by_\disc-y_\disc(\bu) \in X_{\disc}$ in \eqref{eq:1}, $w_\disc=\bp_\disc- p_\disc(\bu) \in X_{\disc,\star}$ in \eqref{eq:2}, use the symmetry of $a_\disc(\cdot,\cdot)$, Theorem \ref{th:error.est.PDE} with $\psi=\by$ and Young's inequality to obtain
		\begin{align}
		\alpha(\bP_{\disc,\alpha}&-P_{\disc,\alpha}(\bu),\bu-\bu_h)
		={}- (\Pi_\disc \by_\disc- \by, \Pi_\disc \by_\disc- \Pi_\disc y_\disc(\bu))\nonumber\\
		={}& (\by - \Pi_\disc y_\disc(\bu), \Pi_\disc \by_\disc- \Pi_\disc y_\disc(\bu)) - \norm{\Pi_\disc \by_\disc-  \Pi_\disc y_\disc(\bu)}{}^2 \nonumber \\
	\lesssim {}&\WS_\disc(\by)\norm{\Pi_\disc\by_\disc-\Pi_\disc y_\disc(\bu)}{}- \norm{\Pi_\disc \by_\disc-  \Pi_\disc y_\disc(\bu)}{}^2 \nonumber \\
		\le{}&C_3\WS_\disc(\by)^2+
		\frac{1}{4}\norm{\Pi_\disc\by_\disc-\Pi_\disc y_\disc(\bu)}{}^2  - \norm{\Pi_\disc \by_\disc-  \Pi_\disc y_\disc(\bu)}{}^2,
		\label{inter5}
		\end{align}
	 where $C_3$ only depends on $\O$, $A$ and an upper bound of $C_\disc$.	
	The last term in the right hand side of \eqref{inter3} can be estimated using \eqref{est.bPPdisc} and Young's inequality:
	 \be\label{est.T4}
-\alpha(\bP_{\alpha}-P_{\disc,\alpha}(\bu),\bu-\bu_h)\le \frac{\alpha}{2}\norm{\bu- \bu_h}{}^2
	 +C_4\WS_\disc(\bp)^2,
	 \ee
	 	where $C_4$ only depends on $\O$, $A$, $\alpha$ and an upper bound of $C_\disc$.
	Substitute \eqref{est:firstterm}, \eqref{inter5} and \eqref{est.T4} into \eqref{inter3}, apply the Young's inequality and $\sqrt{\sum_i a_i^2}\le \sum_i a_i$ to complete the proof.
\end{proof}

\begin{proof}[Proof of Proposition \ref{prop.state.adj}]
		An application of triangle inequality yields 
	\begin{align}
	\norm{\Pi_\disc \by_\disc-\by}{}+\norm{\nabla_\disc \by_\disc-\nabla\by}{}  
	\le{}&\norm{\Pi_\disc \by_\disc-\Pi_\disc y_\disc(\bu) }{}+ \norm{\nabla_\disc \by_\disc-\nabla_\disc y_\disc(\bu)}{}\nonumber  \\
	&+ \norm{\Pi_\disc y_\disc(\bu)-\by}{}+\norm{\nabla_\disc y_\disc(\bu)-\nabla\by}{} .\label{est.state.err}
	\end{align}
	Subtracting \eqref{discrete_state} and \eqref{state_aux}, and using the stability property of GS (Proposition \ref{prop.stab}), the first two terms in the right hand side of the above inequality can be estimated as
	$$ \norm{\Pi_\disc \by_\disc-\Pi_\disc y_\disc(\bu) }{}+\norm{\nabla_\disc \by_\disc-\nabla_\disc y_\disc(\bu)}{} \lesssim \norm{\bu-\bu_h}{}.$$
	The last two terms on the right hand side of the \eqref{est.state.err} are estimated using Theorem \ref{th:error.est.PDE} as
		$$ \norm{\Pi_\disc y_\disc(\bu)-\by}{}+\norm{\nabla_\disc y_\disc(\bu)-\nabla\by}{} \lesssim \WS_\disc(\by) .$$ 
		A combination of the above two results yields the error estimates \eqref{est.basic.y} for the state variable. 
		A use of $\dashint_{\O}^{}\Pi_\disc p_\disc(\bu) \d\x=\dashint_{\O}^{}\Pi_\disc \bp_\disc \d\x$ in Proposition \ref{prop.stab} leads to the error estimates for the adjoint variable in a similar way.
\end{proof}

\begin{proof}[Proof of Theorem \ref{thm.super-convergence}]
Consider the auxiliary problem defined by: For $g\in L^2(\O)$, let $p_\disc^*(g)\in X_{\disc}$ solve
	\begin{equation} 
	a_\disc(z_\disc,p_\disc^*(g))=(\Pi_\disc y_\disc(g)-\yd,\Pi_\disc z_\disc)\qquad\forall z_\disc\in X_{\disc}, \label{aux2}
	\end{equation}
	where $y_\disc(g)$ is given by \eqref{state_aux} with $\bu$ replaced by $g$. We fix $p_\disc^*(g)$ by imposing $\dashint_{\O}^{}\Pi_\disc p_\disc^*(g) \d\x=\dashint_{\O}^{} \bp\d\x$. This choice is dictated by the pure Neumann boundary condition and will be essential.
	
	For $K\in\mesh$, let $\overline{\x}_K$ be the centroid (centre of mass) of $K$. A standard approximation property (see e.g. \cite[Lemma A.7]{jd_nn} with
	$w_K\equiv 1$) yields
	\begin{equation}\label{approx.xK}
	\forall K\in\mesh\,,\;\forall \phi\in H^2(K)\,,\;
	\norm{\Proj \phi-\phi(\overline{\x}_K)}{L^2(K)}\lesssim_{\eta} {\rm diam}(K)^2 \norm{\phi}{H^2(K)}.
	\end{equation}
	
Define $\hat{u}$ and $\hat{p}$ a.e.\ on $\O$ by:
	For all $K\in\mesh$ and all $\x\in K$,
	$\hat{u}(\x) = \bu(\overline{\x}_K)$ and  $ \hat{p}(\x)=\bp(\overline{\x}_K)$.
	From \eqref{Projection_ppu} and the Lipschitz continuity of $P_{[a,b]}$, we obtain
	\begin{align}
	\norm{\tu-\tu_{h}}{}
	\le{}&\alpha^{-1}\norm{ \Pi_\disc \bp_\disc-\bp_\mesh}{}\nonumber\\
	\le{}&  \alpha^{-1}\norm{ \bp_\mesh-\Pi_\disc p_\disc^*(\bu)}{} +  \alpha^{-1}\norm{\Pi_\disc p_\disc^*(\bu) -\Pi_\disc p_\disc^*(\hat{u})}{}  \nonumber \\
	& + \alpha^{-1}\norm{\Pi_\disc p_\disc^*(\hat{u})-\Pi_\disc \bp_\disc}{}  \nonumber \\
	=:{}&   \alpha^{-1}T_1 + \alpha^{-1}T_2+ \alpha^{-1}T_3.\label{ineq}
	\end{align}
	
	\textbf{Step 1}: estimate of $T_1$.
	
A use of triangle inequality, \eqref{adj_cont}, \eqref{adj_aux} and $\assum{1}$-i) leads to
	\begin{align}
	T_1 & \le  \norm{ \bp_\mesh-\Pi_\disc p_\disc(\bu)}{} + \norm{ \Pi_\disc p_\disc(\bu)-\Pi_\disc p_\disc^*(\bu)}{} \nonumber\\
	&\lesssim   h^2\norm{\by -\yd}{H^1(\O)} +\norm{ \Pi_\disc p_\disc(\bu)-\Pi_\disc p_\disc^*(\bu)}{}.
	\label{a1.inequality}
	\end{align} 
	
	We now estimate the last term in this inequality. Use the definitions of $C_\disc$, $\norm{\cdot}{\disc}$ and the fact that $\dashint_{\O}^{}\Pi_\disc p_\disc (\bu) \d\x=\dashint_{\O}^{}\Pi_\disc p_\disc^*(\bu) \d\x$ to obtain
\be \label{T1.adj}
\norm{\Pi_\disc(p_\disc(\bu)-p_\disc^*(\bu))}{}^2  \lesssim{}	\norm{\nabla_\disc(p_\disc(\bu)-p_\disc^*(\bu))}{}^2.\ee
	Subtract \eqref{aux2} with $g=\bu$ from \eqref{adj_aux}, substitute $z_\disc=p_\disc(\bu)-p_\disc^*(\bu)$, use property \eqref{prop.M.1} in $\assum{1}$-ii) and  Cauchy-Schwarz inequality to obtain
		\begin{align*}
		\norm{\nabla_\disc(p_\disc(\bu)-p_\disc^*(\bu))}{}^2
		& \lesssim{}
		a_\disc(p_\disc(\bu)-p_\disc^*(\bu),p_\disc(\bu)-p_\disc^*(\bu)) \\
		& = (\by- \Pi_\disc y_\disc (\bu), \Pi_\disc(p_\disc(\bu)-p_\disc^*(\bu))  ) \\
		& = (\by- \by_\mesh, \Pi_\disc(p_\disc(\bu)-p_\disc^*(\bu))  ) \\
		& \qquad + (\by_\mesh- \Pi_\disc y_\disc (\bu), \Pi_\disc(p_\disc(\bu)-p_\disc^*(\bu))  )\\
		&\lesssim{}
		h^2\norm{\by}{H^2(\O)} \norm{\Pi_\disc(p_\disc(\bu)-p_\disc^*(\bu))}{}\\ 
		& \qquad + \norm{\by_\mesh-\Pi_\disc y_\disc(\bu)}{} \norm{\Pi_\disc(p_\disc(\bu)-p_\disc^*(\bu))}{}.  \end{align*} 
	A use of \eqref{T1.adj} and $\assum{1}$-i) leads to
		$\norm{\Pi_\disc p_\disc(\bu)-\Pi_\disc p_\disc^*(\bu)}{} \lesssim h^2\norm{\by}{H^2(\O)} + h^2\norm{f+\bu}{H^1(\O)}$.
	Plugged into \eqref{a1.inequality}, this estimate yields
	\begin{align}
	T_1 \lesssim  h^2(\norm{\by -\yd}{H^1(\O)}+  \norm{\by}{H^2(\O)}
	+\norm{f+\bu}{H^1(\O)} ). \label{a1new.ineq}
	\end{align}
	
	\textbf{Step 2}: estimate of $T_2$.
	
	Subtract the equations  \eqref{aux2} satisfied by $p_\disc^*(\bu)$
	and $p_\disc^*(\hat{u})$ to obtain, for all $z_\disc \in X_{\disc}$,
	\be \label{subtract_aux2}
	a_\disc(z_\disc, p_\disc^*(\bu) -p_\disc^*(\hat{u}))=(\Pi_\disc y_\disc(\bu) -\Pi_\disc y_\disc(\hat{u}),\Pi_\disc z_\disc).
	\ee
Since $ p_\disc^*(\hat{u}) - p_\disc^*(\bu) \in X_{\disc,\star}$, a use of Proposition \ref{prop.stab} in \eqref{subtract_aux2} yields
	\begin{align}
	T_2 &=\norm{\Pi_\disc p_\disc^*(\bu) -\Pi_\disc p_\disc^*(\hat{u})}{}
	\lesssim \norm{\Pi_\disc y_\disc(\bu) -\Pi_\disc y_\disc(\hat{u})}{}. \label{est.A2}
	\end{align}
	Choose $z_\disc=y_\disc(\bu)-y_\disc(\hat{u})$ in \eqref{subtract_aux2}, subtract the equations \eqref{state_aux} satisfied by $y_\disc(\bu)$ and $y_\disc(\hat{u})$, since $p_\disc^*(\bu)-p_\disc^*(\hat{u})\in X_{\disc,\star}$, to obtain
	\begin{align}
	\Vert\Pi_\disc( y_\disc(\bu)-y_\disc(\hat{u}))\Vert^2
	={}&a_\disc(y_\disc(\bu)-y_\disc(\hat{u}),p_\disc^*(\bu)-p_\disc^*(\hat{u}))\nonumber \\
	={}& (\bu-\hat{u}, \Pi_\disc p_\disc^*(\bu)-\Pi_\disc p_\disc^*(\hat{u})). \nonumber
	\end{align}
Set $w_\disc=p_\disc^*(\bu)-p_\disc^*(\hat{u})$, use orthogonality of $\Proj$, Cauchy-Schwarz inequality and \assum{2} to infer
		\begin{align}
		\Vert\Pi_\disc( y_\disc(\bu)-y_\disc(\hat{u}))\Vert^2 
		={}& (\bu-\hat{u},\Pi_\disc w_\disc) \nonumber \\
		={}& (\bu-\Proj\bu,\Pi_\disc w_\disc-\Proj(\Pi_\disc w_\disc))
		+ (\Proj\bu-\hat{u},\Pi_\disc w_\disc) \nonumber \\
		\lesssim_{\eta}{}& h \norm{\bu}{H^1(\O)}h\norm{\nabla_\disc w_\disc}{} + \int_{\O_{1,\mesh}}(\Proj\bu-\hat{u})\Pi_\disc w_\disc\d\x  \nonumber \\
	&+\int_{\O_{2,\mesh}}(\Proj\bu-\hat{u})\Pi_\disc w_\disc\d\x.\label{new.A2}
		\end{align}
		Equation \eqref{subtract_aux2} and the stability of the GDM (Proposition \ref{prop.stab}) show that
		\be \label{stability_aux2}
		\norm{\nabla_\disc w_\disc}{}= \norm{\nabla_\disc(p_\disc^*(\bu)-p_\disc^*(\hat{u}))}{} \lesssim \norm{\Pi_\disc( y_\disc(\bu)-y_\disc(\hat{u}))}{}.
		\ee
		A use of Holder's inequality, $\assum{4}$, $\assum{3}$, the fact that $w_\disc \in X_{\disc,\star}$ and \eqref{stability_aux2} yields an estimate for the second term on the right hand side of \eqref{new.A2} as follows:
		\begin{align}
	\int_{\O_{1,\mesh}}(\Proj\bu-\hat{u})\Pi_\disc w_\disc\d\x  \le{}& \norm{\Proj\bu-\hat{u}}{L^2(\O_{1,\mesh})}\norm{\Pi_\disc w_\disc}{L^2(\O_{1,\mesh})} \nonumber \\
		\le{}& h \norm{\bu}{W^{1,\infty}(\mesh_1)}|\O_{1,\mesh}|^\frac{1}{2} \norm{\Pi_\disc w_\disc}{L^{2^*}(\O)}|\O_{1,\mesh}|^{\frac{1}{2}-\frac{1}{2^*}} \nonumber \\
		\lesssim{}&   h^{2-\frac{1}{2^*}}\norm{\bu}{W^{1,\infty}(\mesh_1)}\norm{w_\disc}{\disc} \nonumber\\
		={}&   h^{2-\frac{1}{2^*}}\norm{\bu}{W^{1,\infty}(\mesh_1)}\norm{\nabla_\disc w_\disc}{} \nonumber\\
		\lesssim{}&   h^{2-\frac{1}{2^*}}\norm{\bu}{W^{1,\infty}(\mesh_1)}\norm{\Pi_\disc( y_\disc(\bu)-y_\disc(\hat{u}))}{}. \label{a21.ineq}
		\end{align}
		Consider now the last term on the right hand side of \eqref{new.A2}. For any $K\in \mesh_2$, we have $\bu=a$ on $K$, $\bu=b$ on $K$,
		or, by \eqref{proj_cont}, $\bu=-\alpha^{-1}\bp+\oc$ on $K$.
		Hence, on $K$, $\Proj\bu-\hat{u}=0$ or $\Proj\bu-\hat{u}=\alpha^{-1}\left(\hat{p}-\Proj\bp\right)$. This leads to $|\Proj\bu-\hat{u}|\le \alpha^{-1}|\hat{p}-\Proj\bp|$ on $\O_{2,\mesh}$.
		 Use \eqref{approx.xK}, the definition of $C_\disc$, the fact that $w_\disc \in X_{\disc,\star}$ and \eqref{stability_aux2} to obtain
		\begin{align}
		\int_{\O_{2,\mesh}}(\Proj\bu-\hat{u})\Pi_\disc w_\disc\d\x {}&\le \norm{\Proj\bu-\hat{u}}{L^2(\O_{2,\mesh})}\norm{\Pi_\disc w_\disc}{} \nonumber\\
		{}&\le\alpha^{-1}\norm{\Proj\bp-\hat{p}}{L^2(\O_{2,\mesh})}\norm{\Pi_\disc w_\disc}{} \nonumber\\
		{}&\lesssim_{\eta} h^2\alpha^{-1}\norm{\bp}{H^2(\O_{2,\mesh})}\norm{\nabla_\disc w_\disc}{}\nonumber \\
		{}&\lesssim_{\eta} h^2 \alpha^{-1}\norm{\bp}{H^2(\O_{2,\mesh})}\norm{\Pi_\disc( y_\disc(\bu)-y_\disc(\hat{u}))}{}. \label{a22.ineq}
		\end{align}
		Plug \eqref{stability_aux2}, \eqref{a21.ineq} and \eqref{a22.ineq} into \eqref{new.A2} and then in \eqref{est.A2} to get
	\be
	T_2\lesssim_{\eta}h^{2-\frac{1}{2^*}}\norm{\bu}{W^{1,\infty}(\mesh_1)}+ h^2 \left( \norm{\bu}{H^1(\O)} +\alpha^{-1}\norm{\bp}{H^2(\O_{2,\mesh})}\right). \label{yd.ineq} 
	\ee
	
	\medskip
	
	\textbf{Step 3}: estimate of $T_3$.\\
	Subtract \eqref{discrete_adjoint} from \eqref{aux2} with $g=\hat{u}$ and \eqref{discrete_state} from \eqref{state_aux} with $\hat{u}$ instead of $\bu$, we obtain for all $z_\disc \in X_{\disc}$ and $w_\disc \in X_{\disc,\star}$,
\be \label{t3.aux1}
	a_\disc(z_\disc, p_\disc^*(\hat{u})-\bp_\disc) = (\Pi_\disc y_\disc(\hat{u})-\Pi_\disc \by_\disc, \Pi_\disc z_\disc),
\ee
	\be\label{t3.aux2}
	a_\disc(y_\disc(\hat{u})-\by_\disc, w_\disc) = (\hat{u}-\bu_h, \Pi_\disc w_\disc).
	\ee
 Substitute $z_\disc = p_\disc^*(\hat{u})-\bp_\disc \in X_{\disc,\star}$ in \eqref{t3.aux1}, $w_\disc = y_\disc(\hat{u})-\by_\disc \in X_{\disc,\star}$ in \eqref{t3.aux2} and use Proposition \ref{prop.stab} to obtain
	\be\label{a3.ineq}
	T_3 =\norm{\Pi_\disc p_\disc^*(\hat{u})-\Pi_\disc \bp_\disc}{}  \lesssim \norm{\Pi_\disc y_\disc(\hat{u})-\Pi_\disc \by_\disc}{} \lesssim \norm{\hat{u}-\bu_h}{}.
	\ee
The optimality condition \eqref{opt_cont} \cite[Lemma 3.5]{CMAR} yields for a.e.\ $\x \in \O$,
		\[
		\big(\bp(\x)+\alpha\bu(\x)\big)\,\big(v(\x)-\bu(\x)\big) \geq 0 \mbox{   for all   } v \in  \Uad. 
		\]
		Since $\bu$, $\bp$ and $\bu_h$ are continuous at the centroid $\overline{\x}_K$, we
		can choose $\x=\overline{\x}_K$ and $v(\overline{\x}_K)=\bu_h(\overline{\x}_K)(=\bu_h$ on $K$). All the involved
		functions being constants over $K$, this gives
		\[
		\left(\hat{p}+\alpha\hat{u}\right) \left(\bu_h-\hat{u}\right) \geq 0 \mbox{ on $K$, for all $K\in\mesh$}.
		\]
		Integrate over $K$ and sum over $K \in \mesh$ to deduce
		\[
		(\hat{p} +\alpha \hat{u} ,\bu_h-\hat{u}) \geq 0.
		\]
		Choose $v_h=\hat{u}$ in the discrete optimality condition \eqref{opt_discrete} to establish
		\[(\Pi_\disc\bp_\disc+\alpha\bu_h,\hat{u}-\bu_h) \geq 0. \]
		Add the above two inequalities to obtain
		\[(\hat{p}-\Pi_\disc\bp_\disc+\alpha(\hat{u}-\bu_h),\bu_h-\hat{u}) \geq 0,
		\]
		and thus
		\begin{align}
		\alpha \norm{\hat{u}-\bu_h}{}^2 \leq{}& (\hat{p}-\Pi_\disc\bp_\disc,\bu_h-\hat{u}) \nonumber \\
		={}&(\hat{p}-\bp_\mesh,\bu_h-\hat{u})+(\bp_\mesh-\Pi_\disc p_\disc^*(\hat{u}),\bu_h-\hat{u})\nonumber \\
		& +(\Pi_\disc p_\disc^*(\hat{u})-\Pi_\disc\bp_\disc,\bu_h-\hat{u}) \nonumber \\
		=:{}&M_1+M_2+M_3.\label{a3new.ineq}
		\end{align}
		Since $\bu_h-\hat{u}$ is piecewise constant on $\mesh$,
		the orthogonality property of $\Proj$, \eqref{approx.xK} and \eqref{prop.M.2}
		in $\assum{1}$-ii) lead to
		\begin{align}
		M_1&= (\hat{p}-\Proj\bp_\mesh,\bu_h-\hat{u}) 
		\nonumber \\
&=  (\hat{p}-\Proj\bp,\bu_h-\hat{u})+ (\Proj(\bp-\bp_\mesh),\bu_h-\hat{u}) 
\lesssim_{\eta}  h^2\norm{\bp}{H^2(\O)}\norm{\bu_h-\hat{u}}{}. \label{m1.ineq}
		\end{align}	 
		By Cauchy--Schwarz inequality, triangle inequality and the notations in \eqref{ineq}, we obtain
		\be
		M_2\le \norm{\bp_\mesh-\Pi_\disc p_\disc^*(\hat{u})}{}\norm{\bu_h-\hat{u}}{}
		\lesssim (T_1+T_2) \norm{\bu_h-\hat{u}}{}.\label{m2.ineq}
		\ee
		Subtract the equations \eqref{discrete_state} and \eqref{state_aux} (with
		$\hat{u}$ instead of $\bu$) satisfied by $\by_\disc$ and $y_\disc(\hat{u})$, choose $w_\disc= p_\disc^*(\hat{u})-\bp_\disc$, and use the equations
		\eqref{discrete_adjoint} and \eqref{aux2} on $\bp_\disc$ and $p_\disc^*(\hat{u})$ to arrive at
\begin{align}
M_3	&={}(\Pi_\disc (p_\disc^*(\hat{u})-\bp_\disc),\bu_h-\hat{u})= a_\disc(\by_\disc-y_\disc(\hat{u}),p_\disc^*(\hat{u})-\bp_\disc) \nonumber\\
&={} (\Pi_\disc(y_\disc(\hat{u})-\by_\disc),\Pi_\disc(\by_\disc-y_\disc(\hat{u}))
\le 0. \label{m3.ineq}
\end{align}
		A substitution of \eqref{m1.ineq}--\eqref{m3.ineq} (together with the estimates
		\eqref{a1new.ineq} and \eqref{yd.ineq} of $T_1$ and $T_2$) into \eqref{a3new.ineq} yields an estimate on $\norm{\bu_h-\hat{u}}{}$ which, when
		plugged into \eqref{a3.ineq}, gives
	\be
	\begin{aligned}
		T_{3} \lesssim_{\eta}{}& \alpha^{-1}
		h^{2-\frac{1}{2^*}}\norm{\bu}{W^{1,\infty}(\mesh_1)}\\
		&+\alpha^{-1}h^2\big(\norm{\by -\yd}{H^1(\O)}+  \norm{\by}{H^2(\O)}+
		(1+\alpha^{-1})\norm{\bp}{H^2(\O)}\\
		&\quad\quad\quad\quad+\norm{f+\bu}{H^1(\O)}+\norm{\bu}{H^1(\O)}\big). \label{a3new1.ineq}
	\end{aligned}
	\ee
\textbf{Step 4}: conclusion.

A use of \eqref{def:cost} and the fact that $\bu$ is optimal leads to
$$\frac{\alpha}{2} \norm{\bu}{}^2 \le J(\by,\bu) \le J(y(0),0)=\frac{1}{2}\norm{y(0)-\yd}{}^2,$$
where $y(0)$ is the solution to the state equation \eqref{state1} with $u=0$. Hence,
\be \label{u.l2est}
\norm{\bu}{} \lesssim \sqrt{\alpha}^{-1}\left(\norm{f}{}+\norm{\by_d}{}\right).
\ee
From \eqref{proj_cont} and \eqref{def_proj[a,b]}, we have
$$\nabla\bu=\nabla P_{[a,b]}(-\alpha^{-1}\bp + \oc)=\mathbbm{1}_{(-\alpha^{-1}\bp + \oc)\in [a,b]}\nabla( -\alpha^{-1}\bp + \oc).$$
Note that $|\nabla( -\alpha^{-1}\bp + \oc)|=\alpha^{-1}|\nabla \bp|$. Therefore,
\be\label{u.nabla.l2est}
\begin{aligned}
\norm{\nabla\bu}{}^2=\int_{\O}^{}|\nabla\bu|^2\d\x={}&\int_{\O}^{}|\mathbbm{1}_{(-\alpha^{-1}\bp + \oc)\in [a,b]}\nabla( -\alpha^{-1}\bp + \oc)|^2\d\x \\
\lesssim{}& \alpha^{-2} \norm{\nabla \bp}{}^2.
\end{aligned}
\ee
Combine \eqref{u.l2est} and \eqref{u.nabla.l2est} to obtain
\be \label{u.h1est}
\norm{\bu}{H^1(\O)}\lesssim \sqrt{\alpha}^{-1}\left(\norm{f}{}+\norm{\by_d}{}\right) + \alpha^{-1} \norm{\nabla \bp}{}.
\ee
Use \eqref{u.h1est} in \eqref{a1new.ineq}, \eqref{yd.ineq} and \eqref{a3new1.ineq} and plug the resulting estimates in \eqref{ineq} to complete the proof.
\end{proof}

\begin{proof}[Proof of Theorem \ref{thm.fullsuper-convergence}]
	The proof of this theorem is identical to the proof of Theorem \ref{thm.super-convergence}, except for the estimate of $T_{2}$. This estimate is the only source of the $2-\frac{1}{2^*}$ power (instead of 2),
	and the only place where we used Assumption \assum{3}, here replaced by \eqref{Linfty.est}. 
Recall \assum{4} and use \eqref{Linfty.est} in \eqref{subtract_aux2}
		satisfied by  $p_\disc^*(\bu)-p_\disc^*(\hat{u})$ to write
		\begin{align}
		\int_{\O_{1,\mesh}}(\Proj\bu-\hat{u}){}&\Pi_\disc w_\disc\d\x  =\int_{\O_{1,\mesh}}(\Proj\bu-\hat{u})\big(\Pi_\disc p_\disc^*(\bu)-\Pi_\disc p_\disc^*(\hat{u})\big)\d\x  
		\nonumber\\
		&\lesssim \norm{\Proj\bu-\hat{u}}{L^\infty(\O_{1,\mesh})} \norm{\Pi_\disc p_\disc^*(\bu)-\Pi_\disc p_\disc^*(\hat{u})}{L^\infty(\O_{1,\mesh})}|\O_{1,\mesh}|  \nonumber\\
		&\lesssim h^2 \norm{\bu}{W^{1,\infty}(\mesh_1)}\norm{\Pi_\disc p_\disc^*(\bu)-\Pi_\disc p_\disc^*(\hat{u})}{L^\infty(\O)}  \nonumber\\
		&\lesssim h^2 \norm{\bu}{W^{1,\infty}(\mesh_1)}\delta \norm{\Pi_\disc y_\disc(\bu)-\Pi_\disc y_\disc(\hat{u})}{}.
		\label{improved.A2}
		\end{align}
		The rest of the proof follows from this estimate.
\end{proof}

\begin{proof}[Proof of Corollary \ref{cor.super-convergence}]
		The result for the state and adjoint variables can be derived exactly as in \cite[Corollary 3.7]{GDM_control}. 
\end{proof}

\thanks{\textbf{Acknowledgement}: The first author acknowledges the funding support from the Australian Government through the Australian Research Council's Discovery Projects funding scheme (pro\-ject number DP170100605). 
The second and third authors acknowledge the funding support from the DST project SR/S4/MS/808/12.}

\bibliographystyle{abbrv}
\bibliography{control_pure_neumann}								
\end{document}